\documentclass[12pt]{article}

\usepackage[utf8]{inputenc}
\usepackage[T1]{fontenc}
\usepackage{bbm}
\usepackage{stmaryrd}
\usepackage{latexsym}
\usepackage{amsfonts}
\usepackage{amsmath,amssymb,amscd,textcomp}
\usepackage{yfonts}
\usepackage{dsfont}
\usepackage{mathabx}
\usepackage[dvips]{graphicx}
\usepackage{epsfig}
\usepackage{psfrag}
\usepackage[font={small}]{caption}
\usepackage{color}
\usepackage{float}
\usepackage{upgreek}
\usepackage{tikz}
\usepackage{tikz-cd}
\usepackage{relsize}
\usepackage{enumerate} 
 \usepackage[all]{xy}
 \usepackage{hyperref}
\DeclareFontFamily{U}{txsyc}{}
\DeclareFontShape{U}{txsyc}{m}{n}{
   <-> txsyc%
}{}
\DeclareFontShape{U}{txsyc}{bx}{n}{
   <-> txbsyc%
}{}
\DeclareFontShape{U}{txsyc}{l}{n}{<->ssub * txsyc/m/n}{}
\DeclareFontShape{U}{txsyc}{b}{n}{<->ssub * txsyc/bx/n}{}
\DeclareSymbolFont{symbolsC}{U}{txsyc}{m}{n}
\SetSymbolFont{symbolsC}{bold}{U}{txsyc}{bx}{n}
\DeclareFontSubstitution{U}{txsyc}{m}{n}
\DeclareMathSymbol{\df}{\mathrel}{symbolsC}{"42}
\DeclareMathSymbol{\fd}{\mathrel}{symbolsC}{"43}
\DeclareMathSymbol{\lJoin}{\mathrel}{symbolsC}{"58}
\DeclareMathSymbol{\rJoin}{\mathrel}{symbolsC}{"59}

\newcommand{\f}[2]{\frac{#1}{#2}}

\newcommand{\cB}{\mathcal{B}}

\newcommand{\cL}{\mathcal{L}}

\newcommand{\cU}{\mathcal{U}}

\newcommand{\EE}{\mathbb{E}}

\newcommand{\NN}{\mathbb{N}}
\newcommand{\PP}{\mathbb{P}}

\newcommand{\usm}{\underline{\smash{\mu}}}

\newcommand{\ZZ}{\mathbb{Z}}

\newcommand{\fs}{\mathfrak{s}}

\newcommand{\iy}{\infty}

\newcommand{\lt}{\left}
\newcommand{\me}{\medskip}

\newcommand{\pa}{\partial}
\newcommand{\ri}{\rightarrow}
\newcommand{\rt}{\right}
\newcommand{\sm}{\smallskip}

\newcommand{\wi}{\widetilde}

\DeclareMathOperator*{\esssup}{ess\,sup}

\newcommand{\fo}{\forall\ }

\newcommand{\st}{\,:\,}

\newcommand{\un}{\mathds{1}}

\newcommand{\bq}{\begin{eqnarray*}}
\newcommand{\bqn}[1]{\begin{eqnarray}\label{#1}}
\newcommand{\eq}{\end{eqnarray*}}
\newcommand{\eqn}{\end{eqnarray}}
\newcommand{\wwtbp}{\par\hfill $\blacksquare$\par\me\noindent}

\newcommand{\thistitlepagestyle}{}

\newcommand{\ttsim}{\raise.17ex\hbox{$\scriptstyle\mathtt{\sim}$}}

\newcommand{\kh}{\kern .08em}
\newcommand{\bb}{\mathbb{B}}
\newcommand{\eql}{\stackrel{\mathcal{L}}{=}}

\newtheorem{pro}{Proposition} 
\newtheorem{cor}[pro]{Corollary}
\newtheorem{lem}[pro]{Lemma}
\newtheorem{theo}[pro]{Theorem}

\newtheorem{defi}[pro]{Definition}
\renewcommand{\thepro}{\arabic{pro}}

\newenvironment{deff}
{\par\me\refstepcounter{pro}\noindent{\bf Definition \thepro\ }}
{\par\hfill $\square$\par\sm\noindent}

\newenvironment{rem}
{\par\me\refstepcounter{pro}\noindent{\bf Remark \thepro\ }}
{\par\hfill $\square$\par\sm\noindent}

\newcommand{\proof}{\par\me\noindent\textbf{Proof}\par\sm\noindent}

\newcommand{\comment}[1]{}

\allowdisplaybreaks

\title{Cutoff in separation profile for the flat torus, sphere, and projective spaces.}
 \author{
 Kol\'eh\`e Coulibaly-Pasquier}

 \date{\vbox{\copy0
 \copy1
 \copy2
}
 }

\begin{document}


\setbox1=\vbox{
\large
\begin{center}
 Institut \'Elie Cartan de Lorraine, UMR 7502\\
Universit\'e de Lorraine and CNRS
\end{center}
} 

\setbox5=\vbox{
\hbox{kolehe.coulibaly@univ-lorraine.fr\\[1mm]}
\hbox{Institut \'Elie Cartan de Lorraine\\}
\hbox{Universit\'e de Lorraine}
}



\maketitle
\thistitlepagestyle
\abstract{ In this paper we show that the cutoff in separation profile for  Brownian motion on flat torus $ \mathbb{T}^n$; on spheres  $\mathbb{S}^n $; on  real, complex and quaternionic projective space  resp. $ \mathbb{P}^n(\mathbb{R})$, $ \mathbb{P}^n(\mathbb{C})$ and $ \mathbb{P}^n(\mathbb{H})$, is the tail distribution of  some explicit Gumbel distribution. The
proof is based on intertwining,  dual process together with a representation formula  of large moments of the covering time of the dual process.}

\vfill\null
{\small
\textbf{Keywords: }Brownian motions on Riemannian manifolds, intertwining relations, set-valued dual processes, couplings of primal and dual processes, stochastic mean curvature evolutions, strong stationary times, separation discrepancy, hitting times, Cutoff profile.
\par
\vskip.3cm
\textbf{MSC2010:} primary: 58J65, secondary:  37A25 58J35 60J60 35K08.
\par\vskip.3cm

 \textbf{Fundings:} the grants ANR-25-CE40-6875-01.
}\par


\section{Introduction}

 Consider a family of Brownian motion $X^n $ in a family of compact Riemannian  manifolds $M^n$ of real dimension $n$ starting at some point. For fixed $ n$, this diffusion process approaches the uniform distribution $\mathcal{U}_{M^n} $ overs $ M^n$ as $t$ goes to infinity. In this paper we are interested in  family of manifolds $M^n$ which are flat torus, spheres, real complex and quaternionic projective spaces. A natural question is to estimate the corresponding speeds of convergence,
or mixing times,  for large $n$, and it depends on how the distance between the
time marginal and the uniform distribution is measured. In what follows we measure the difference to equilibrium in term of separation discrepancy. The separation discrepancy between two probability measures $\mu$ and $\nu$ defined on the same measurable space
is given by
\bqn{sepdef}
\fs(\mu,\nu)&=&\esssup_{\nu} \big(1-\f{d\mu}{d\nu} \big)\eqn
where $d\mu/d\nu$ is the Radon-Nikodym derivative of the absolute continuous part  of $\mu$ with respect to $\nu$.

In a previous work \cite{Mag-Cou}, we describe the  behavior of $\fs(\cL(X^n(t),\mathcal{U}_{n}) $ where $M^n$ is the sphere or projective spaces (normalized in order to have the same diameter $\pi $). We obtain the following result,  Theorem 4 in \cite{Mag-Cou}:
\begin{align}
   &\lim_{c \to \infty}\limsup\limits_{n \to \infty}  \fs(\cL(X^n(\frac{a\ln(n)}{n} + \frac{c}{n}) ) ),\mathcal{U}_{n})  =0 ,&    \label{cut-pro}\\
  &\lim_{c \to \infty} \liminf\limits_{n \to \infty}  \fs(\cL(X^n (\fs(\cL(X^n(\frac{a\ln(n)}{n} - \frac{c}{n})),\mathcal{U}_{n})  = 1,& \notag \\
  \notag
  \end{align}

 where $\cL(X^n(t)) $ is the law of the Brownian motion at time $t$ and $ a = 1$ for the spheres case and $ a= 2$ for projective cases. This result prove that around the time $ \frac{a\ln(n)}{n} $ the separation discrepancy between $ X^n$ to the uniform distribution abruptly drop from the largest value $1$ to the smallest value $ 0$ during a window of size $ \frac1n. $ This phenomenon is the so called cutoff phenomenon. 
 
 \begin{defi}\label{def-cut}
The family of diffusion processes $(X^n)_{n\in\NN\setminus\{1\}}$ with invariant measure $ \mathcal{U}_n$ has a cutoff in separation with cutoff times $(a_n)_{n\in\NN\setminus\{1\}} $ if

\begin{itemize}
   \item $ \fo r>0,\qquad\lim_{n\ri\iy} \fs(\cL(X^n((1+r)a_n)),\mathcal{U}_n) =0 $
   \item  $ \fo r\in(0,1),\qquad \lim_{n\ri\iy} \fs(\cL(X^n((1-r)a_n)),\mathcal{U}_n)=1 $
\end{itemize}
\end{defi}  

 \begin{defi}[c3 and w3 in \cite{Chen-Coste}] \label{def2}
 The family of diffusion processes $(X^n)_{n\in\NN\setminus\{1\}}$ with invariant measure $ \mathcal{U}_n$ has an $(a_n,b_n)$ cutoff in separation with cutoff times $(a_n)$, 
 and window $(b_n)_{n}$ if 
   
 \begin{itemize}
   \item  $b_n =o(a_n) $
    \item $\lim\limits_{r\to \infty}\limsup \limits_{n\to\infty}\mathfrak{s}(\mathcal{L}(X^n({a_n+rb_n})),\mathcal{U}_n)=0$
    \item $\lim\limits_{r\to \infty}\liminf \limits_{n\to\infty}\mathfrak{s}(\mathcal{L}(X^n({a_n-rb_n})),\mathcal{U}_n)= 1.$
\end{itemize}

Moreover we say that the windows $(b_n)_{n}$ is strongly optimal if for all $r > 0 $
$$0 <  \liminf \limits_{n\to\infty}\mathfrak{s}(\mathcal{L}(X^n({a_n+rb_n})),\mathcal{U}_n) \le \limsup \limits_{n\to\infty}\mathfrak{s}(\mathcal{L}(X^n({a_n-rb_n})),\mathcal{U}_n) <1 .$$

 \end{defi}    
  The cutoff phenomenon was discovered by Diaconis and Shahshahani  \cite{DS} and Aldous and Diaconis \cite{AD} in the context of card shuffling. Afterward, the cutoff phenomenon has been proven for a large variety of finite Markov chains, see e.g.\ Diaconis \cite{MR1374011}, Diaconis and Fill \cite{MR1071805}, Levin, Peres and Wilmer \cite{MR2466937}. The cutoff phenomenon for Markov processes  on a continuous state space  have been proved, e.g. Chen and Saloff-Coste \cite{MR1306030},  \cite{Chen-Coste}  proved the cutoff phenomenon  in total variation distance for the Brownian motions on the spheres.
 
    Note that is shown by Hermon, Lacoin and  Peres \cite{zbMATH06618510} that total variation and separation cutoff are not equivalent and neither one implies the other. See also M\'eliot \cite{MR3201989} for cutoff phenomenon in total variation distance in  classical compact symmetric spaces. For cutoff is separation for rotationally symmetric compact manifolds see \cite{sphere} and \cite{rot}.\\

  The aim of this paper is to refines \eqref{cut-pro} by deriving the shape of the relaxation to equilibrium. 
 More precisely we are interested in proving the existence and  finding the limiting profile:
   
$$  \lim_{n \to \infty}  \fs(\cL(X^n(\frac{a\ln(n)}{n} + \frac{c}{n}) ) ),\mathcal{U}_{n}) \quad \text{, for all }  c \in \mathbb{R}. $$
We are also interested in the cutoff in separation together with the  profile for Brownian motion in the flat torus. Questions like these have been addressed in the literature e.g.  Diaconis, Graham,  and Morrison in \cite{DGM} compute the cutoff  profile  (in total variation) for the random walk in hypercube and Lacoin in \cite{Lacoin-pro} compute the cutoff  profile (in total variation) for the simple exclusion process on circle.

 The general strategy of the proof is to build a dual process of the Brownian motion, and  derive a sharp strong stationary time. This permit to translate the separation discrepancy between the law of the Brownian motion and it's invariant measure in term of the tail distribution function of the covering time of the duals process and then  analyze carefully  this distribution using again intertwining with radial Laplacian. Note that we do not use representation theory, only relation on Jacobi's polynomials.
 \\
 
 The paper is organized as follows:
 \\
 
 In section 2, we describe a rectangular dual process of the Brownian motion on the flat torus $ \mathbb{T}^n $. The sharp strong stationary time is them express as a maximum of $n$ independent and identically distributed  random variables $ \tau_i$. This random variables $ \tau_i$ have the law of the hitting time of a Bessel $3$ process. We recall some  known results on the tail distribution of the hitting time of the Bessel process,  with the aim of comparing them to the point of view of  section 2.3, using intertwining, which we believe to be new.  We show that the cutoff time is $  \frac{2\log(n)}{\pi^2} $ see Theorem \ref{cut-off-tore}. Using contour integral we compute the asymptotic of the tail distribution, and since the cutoff times goes to infinity we  obtain an asymptotic profile of Gumbel type see Theorem \ref{profile-tore}, that is natural as the law of a maximum of iid random variables.  In subsection 2.3, using intertwining we compute the law  of $ \tau_i$ using the Neumann spectrum of the Laplacian in $[-1,1]$, this alternative point of view will be applicable in Section 3.
 
 In Section 3 we computes the cutoff in separation  profile function for Brownian motion in $M^{n}$, where $M^{n}$ is a $n$ dimensional spheres $ \mathbb{S}^n$, real, complex, and quaternionic  projective space  resp. $ \mathbb{P}^n(\mathbb{R})$, $ \mathbb{P}^n(\mathbb{C})$ and $ \mathbb{P}^n(\mathbb{H})$.
 It was seen in \cite{zbMATH07470497} and \cite{arnaudon:hal-03037469},  that $X_n$ can be intertwined with a dual process  taking values in the closed balls of $M^{n}$ whose boundary evolution is a modified stochastic mean curvature flow, and whose ray generator is $L_n$ see section 3 for the definition. We use this dual process to construct a sharp and strong stationary time $\tau_{M^n} $. Using another intertwining  with the radial Laplacian see subsection 3.2, we obtain a correspondence between the spectrum of the radial Laplacian and the Green operator associated to  $ L_n$ see Proposition \ref{iso-spec}. This correspondence allows us to compute large moments of  $\tau_{M^n} $  see Theorem \ref{moment} and so the tail distribution   of $\tau_{M^n} $ see Theorem \ref{queue1}. This allows us to give a unified proof of the cutoff phenomenon for Brownian motion in $ M^n$ see Remark \ref{Remunif}. Contrary to the Torus case the cutoff times goes to $0$,  hence all the tail distribution function of $\tau_{M^n} $, is needed to compute the asymptotic cutoff profile, this is donne in subsection 3.4. see Theorem \ref{profile-S} and  Theorem \ref{profile-P}.  We obtain the main theorem of the paper :
   
 \begin{theo}
For $M^n =\mathbb{S}^n;   \mathbb{P}^n(\mathbb{C}), \mathbb{P}^n(\mathbb{H}) \text{ or }  \mathbb{P}^n(\mathbb{R}) $, the separation discrepancy  to equilibrium for the Brownian motion $X^n$  in $M^n$ has the following asymptotic profile: for all $c \in \mathbb{R} $   
$$\lim_{n \to \infty}  \fs(\cL(X_n( \frac{a\ln(n)}{n} + \frac{c}{n}) ) ,\cU_{M^n})  =    1- e^{- \frac{e^{-\frac{c}{a}}}{a}},$$
where $ a = 1$ for the spheres case and $ a= 2$ for projective cases.

The windows of the cutoff sequence  $(\frac{a\ln(n)}{n},\frac{1}{n})$ is strongly optimal in the sens of definition \ref{def2}. Moreover the above convergence is uniform on all compact.
\end{theo}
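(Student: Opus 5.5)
Reduce the separation discrepancy to the tail of a sharp strong stationary time, and then compute that tail through large moments.

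\textbf{Step 1: reduction to a tail probability.} Following \cite{zbMATH07470497,arnaudon:hal-03037469}, intertwine $X^n$ with a ball-valued dual process $D_t$. Its boundary evolves by the modified stochastic mean curvature flow, and its radius $R_t$ is a diffusion with generator $L_n$ started at $0$. Let $\tau_{M^n}$ be the first time $R_t$ reaches the diameter $\pi$ (or $\pi/2$ in the normalized projective cases), i.e.\ the time $D_t$ covers $M^n$. This is a strong stationary time. It is sharp, because the antipodal point (resp.\ the cut locus) of the starting point is only reached at $\tau_{M^n}$. Hence
$$\fs(\cL(X^n(t)),\cU_{M^n})=\PP(\tau_{M^n}>t),$$
and the theorem reduces to showing
$$\PP\Big(n\tau_{M^n}-a\ln n>c\Big)\to 1-e^{-e^{-c/a}/a}$$
for every $c\in\mathbb{R}$.

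\textbf{Step 2: spectral representation of the tail.} Intertwine $L_n$ with the radial Laplacian of $M^n$, which is a Jacobi operator with eigenvalues $\lambda_k=k(k+n-1)$ on the sphere and the analogous Jacobi eigenvalues in the projective cases. This should show that the Green operator of $L_n$ killed at $\pi$ has spectrum given by the nonzero radial eigenvalues (Proposition \ref{iso-spec}). Then $\tau_{M^n}$ is a sum of independent exponential variables with these rates, or at least its Laplace transform is $\prod_k \lambda_k/(\lambda_k+s)$. From this, large moments follow as in Theorem \ref{moment}, and the tail has the form
$$\PP(\tau>t)=\sum_{k\ge1}\prod_{j\ne k}\frac{\lambda_j}{\lambda_j-\lambda_k}\,e^{-\lambda_k t},$$
as in Theorem \ref{queue1}. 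The sum converges for $t>0$ thanks to the quadratic growth of $\lambda_k$.

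\textbf{Step 3: the limit.} Set $t=(a\ln n+c)/n$.
\begin{itemize}
\item In the sphere case $\lambda_k\approx kn$ for $k\ll n$, so $e^{-\lambda_k t}\approx n^{-k}e^{-kc}$.
\item The coefficients $\prod_{j\ne k}\lambda_j/(\lambda_j-\lambda_k)$ behave like $(-1)^{k+1}\binom{n+\cdot}{k}$-type quantities, roughly $(-1)^{k+1}n^k/k!$.
\item The $k$-th term therefore tends to $(-1)^{k+1}e^{-kc}/k!$, and the sum tends to $1-\exp(-e^{-c})$.
\end{itemize}
In the projective cases only even modes (or the rescaled Jacobi spectrum) appear. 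Then $\lambda_k\approx 2kn$ and the coefficients are of order $n^k/(a^k k!)$, which produces $1-\exp(-e^{-c/a}/a)$ with $a=2$.

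\textbf{Main obstacle: exchanging limit and sum.} The difficulty is dominated convergence for the alternating series, in particular the terms with $k$ comparable to $n$, where the approximation $\lambda_k\approx kn$ fails.
\begin{itemize}
\item First try to bound the $k$-th term uniformly in $n$ by $C^k e^{-k c}/k!$ for $k\le \sqrt n$, using $\lambda_k t\ge k(a\ln n+c)$ together with an explicit Gamma-function form of the coefficient products.
\item For $k\ge\sqrt n$, bound the terms crudely using $e^{-\lambda_k t}\le e^{-k^2\ln n/n}$, which dominates the polynomial coefficients.
\item If this fails, work instead with the moment formulation: show that $\EE[(n\tau-a\ln n)^m]$ converges to the Gumbel moments, and conclude since the Gumbel law is determined by its moments.
\end{itemize}

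\textbf{Uniformity and optimality.} Uniform convergence on compact sets in $c$ follows from monotonicity in $c$ of both sides and continuity of the limit, by Dini/Pólya's theorem. Strong optimality follows because the limit lies strictly between $0$ and $1$ for every finite $c$.
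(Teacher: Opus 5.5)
Your overall route is the paper's. The covering time of the ball-valued dual is a sharp strong stationary time, the intertwining of $L_n$ with the radial Laplacian gives the tail series $\mathbb{P}[\tau_{M^n}\ge t]=\sum_{k\ge 1}(-1)^{k+1}\mathcal{C}_k\,e^{\lambda_k t}$ of Theorem~\ref{queue1}, and the profile comes from a termwise limit under dominated convergence. Your partial-fraction coefficients agree with the paper's: for $\mu_j=j(j+\delta)$ the product telescopes to $\prod_{j\neq k}\mu_j/(\mu_j-\mu_k)=(-1)^{k+1}(2k+\delta)\Gamma(k+\delta)/(k!\,\Gamma(\delta+1))=(-1)^{k+1}\mathcal{C}_k$. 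The product formula itself needs a separate argument (Kent's theorem, e.g.\ Hadamard factorization of $s\mapsto 1/\mathbb{E}[e^{-s\tau_{M^n}}]$), not just the spectrum of $G_n$, but you fall back on Theorem~\ref{queue1} anyway. Your P\'olya--Dini argument for uniformity on compacts is correct and shorter than the paper's explicit estimates, and strong optimality is handled correctly.

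Two steps, however, fail as written. The first is the projective normalization. In the paper every $M^n$ has diameter $\pi$ and $\tau_{M^n}$ is the hitting time of $\pi$. The eigenvalues are $-\lambda_k=k(k+\delta)$ with $\delta=n/2$, $n/2+1$, $(n-1)/2$ for $\mathbb{P}^n(\mathbb{C})$, $\mathbb{P}^n(\mathbb{H})$, $\mathbb{P}^n(\mathbb{R})$, so $-\lambda_k\approx kn/2$, not $2kn$. With $2kn$ and $t=(2\ln n+c)/n$ you get $e^{\lambda_k t}\approx n^{-4k}$ against coefficients of size $(n/2)^k/k!$. Every term then tends to $0$, and the limit would be $0$ for all $c$; diameter $\pi/2$ would move the cutoff to $\ln n/(2n)$, contradicting the statement. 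With the correct eigenvalues, $-\lambda_k t=k(\ln n+c/2)+O(k^2\ln n/n)$, which is what produces the terms $(-1)^{k+1}e^{-kc/2}/(2^k k!)$. Likewise your lower bound must read $-\lambda_k t\ge k(a\ln n+c)/a$.

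The second failure is in the step you single out as the main obstacle, the domination for $k\ge\sqrt n$. The bound $e^{\lambda_k t}\le e^{-k^2\ln n/n}$ cannot control the coefficients, which are far from small. $\mathcal{C}_k$ is of size $\delta^k e^{O(k^2/n)}/k!$, e.g.\ roughly $(e\sqrt n)^{\sqrt n}$ at $k\approx\sqrt n$, where $e^{-k^2\ln n/n}\approx 1/n$. In fact the product $\mathcal{C}_k e^{-k^2\ln n/n}$ is huge throughout $\sqrt n\le k\le n^{1-\epsilon}$. You have to keep the part of the exponent that is linear in $k$. For the sphere, $-\lambda_k t=k(\ln n+c)+k(k-1)(\ln n+c)/n$. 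The factor $n^{-k}e^{-kc}$ cancels the $n^k$ in $\mathcal{C}_k$, and the Gaussian factor $e^{-k(k-1)(\ln n+c)/n}$ absorbs $\prod_{j=1}^{k-2}(1+j/n)\le e^{k^2/(2n)}$. This bounds the $k$-th term by $2e^{k(1-c)}/(k-1)!$ for all $k$ at once and all large $n$. That is exactly the paper's domination, and it makes your split at $\sqrt n$ unnecessary; the projective cases are identical with $\delta\approx n/2$.

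Your moment fallback is not usable as stated, since the paper's moment formula only holds for orders $m\ge\lceil n/2\rceil+2$. It can be rescued through cumulants: $\kappa_m(n\tau_{M^n})=(m-1)!\sum_k(n/\mu_k)^m\to(m-1)!\,a^m\zeta(m)$ for $m\ge 2$, and $\mathbb{E}[n\tau_{M^n}]-a\ln n\to a(\gamma_e-\ln a)$. These are the cumulants of the limiting Gumbel law, but this only works once the product formula has actually been proved.
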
 
 
\section{Cutoff in separation and profile for flat torus}\label{section2}

The goal of this section is to show a cutoff phenomenon  for the Brownian motion on the flat torus $ \mathbb{T}^n := (\mathbb{R}/ 2 \mathbb{Z})^n $, which we identify with the set $(-1,1]^n $. Let $x \in \mathbb{T}^n $ and $\mathbb{B}^n_t (x) $ the Brownian motion in  $ \mathbb{T}^n $ that  starts at $x = (x_1, ... , x_n)$. This Brownian motion could be describe as $ \mathbb{B}^n_t(x) = (B^1_t+x_1, ..., B^n+x_n) \quad \overline{mod}  \quad 2 \mathbb{Z} $, where $(B^1_t,..., B^n_t)$ is the usual Brownian motion in $\mathbb{R}^n$. Using the invariance by translation we will assume that  $ x=(0,...,0)$.
The invariant measure for $\mathbb{B}^n $ is the Lebesgue measure on the torus $\mu_{\mathbb{T}^n}(dx^1,...,dx^n))=\mu_{[-1,1]}(dx^1)\otimes...\otimes \mu_{[-1,1]}(dx^n)$. \\
It is well known (e.g. Pitman \cite{MR0375485}, or \cite{zbMATH07470497} ) that each $B^i$ can be intertwined with a process $(D^i(t))_{t\geq 0}= [-Bess^i_t, Bess^i_t]$ taking values in the closed interval of $ \mathbb{R} $, starting at $\{ 0\}$,  where $(Bess^i)_{i \in \{1..n \} }$ is a family of $n$ independent Bessel $3$ process.
 In  \cite{arnaudon:hal-03037469}, several couplings of  $B^i$ and $D^i$ were constructed, so that for any time $t\geq 0$, the conditional law of $X^i(t)$ knowing the trajectory $D^i({[0,t]})\df(D^i(s))_{s\in[0,t]}$ is the normalized uniform law over $D^i(t)$, which will be denoted $\Lambda(D^i(t),\cdot)$ in the sequel. Furthermore, $D^i$ is progressively measurable with respect to $B^i$, in the sense that for any $t\geq 0$, $D^i({[0,t]})$ depends on $B^i$ only through $B^i({[0,t]})$, one example of such coupling is the famous $ 2 Max - X   $ Pitman's Theorem.
 Let $ \tau_i = \inf \{t >0, Bess^i_t =1 \} $, and  $ \overline{\tau}_n := \max \{ \tau_i, i=1..n \}$.
 
  Using the independence of the coordinate, it appears that $\overline{\tau}_n $ is a strong stationary time for $\mathbb{B}^n$, meaning that $\overline{\tau}_n$ is a $\mathcal{F}^{\mathbb{B}^n}$ stopping time,  $\overline{\tau}_n$  and $\mathbb{B}^n_{\overline{\tau}_n}$ are independent, and $\mathbb{B}^n_{\overline{\tau}_n}$ is uniformly distributed over  $\mathbb{T}^n$. 
 Let $D(t) := D^1(t\wedge {\tau}_1) \times ... \times D^n(t \wedge {\tau}_n) \subset \mathbb{T}^n$, then $D(t)$ is a dual process of $\bb^n$ in the following sense :
 \bqn{cou1}
\fo t\geq 0,\qquad \cL(\bb^{n}_t\vert  D{[0,t]})&=&\Lambda_n(D(t),\cdot),\eqn
\par
and $(D(t)_{t\geq 0}$ can be constructed from $(\bb^n_t)_{t\geq 0}$ in an adapted way,
\bqn{cou2}
\fo t\geq 0,\qquad\cL(D{[0,t]}\vert \bb^n)&=&\cL(D{[0,t]}\vert \bb^n_{[0,t]}),\eqn\par
where the Markov kernel  $\Lambda_n$ is the restriction of the uniform measure $\mu_{\mathbb{T}^n} $, i.e. for all $ D$ domain or singleton  of $\mathbb{T}^n$
\bqn{Lambda}
  \fo A\in \cB(\mathbb{T}^n),\qquad \Lambda_n(D,A)&\df &
\lt\{
\begin{array}{lcl}  
&\frac{\mu_{\mathbb{T}^n} (A\cap D)}{\mu_{\mathbb{T}^n} (D)} &\hbox{, if $\mu_{\mathbb{T}^n}(D)>0$}\\
&\delta_x(A) & \hbox{, if $D=\{x\}$, with $x\in \mathbb{T}^n$}
\end{array}
\rt.
\eqn

As a consequence of such a dual and it's coupling, we have (see also Proposition \ref{sep}),
\bq
\fo t\geq 0,\qquad \fs(\cL(\bb^n(t)),\mu_{\mathbb{T}^n})&\leq & \PP[\overline{\tau}_n>t]\eq
where  the l.h.s.\ is the separation discrepancy between the law of $\bb^n(t) := \bb^n_t$ and the uniform distribution $\mu_{\mathbb{T}^n}$ over $\mathbb{T}^{n}$.

Note that for any $t\in [0,\overline{\tau}_n)$, the ``opposite pole'' $\overline{(1,...,1)} \in \mathbb{T}^n$ does not belong to the support of $\Lambda_n(D(t),\cdot)$.
It follows from an extension of Remark 2.39 of Diaconis and Fill \cite{MR1071805} that $\overline{\tau}_n$ is even a sharp  strong stationary time for $\bb^n$, meaning
that in fact
\bqn{sharp}
\fo t\geq 0,\qquad \fs(\cL(\bb^n(t)),\mu_{\mathbb{T}^n})&=& \PP[\overline{\tau}_n>t] .\eqn
\par
Thus the understanding of the convergence in separation of $\bb^n$ toward $\mu_{\mathbb{T}^n} $ amounts to understanding the  tail distribution of $\overline{\tau}_n$.

\subsection{Hitting time of Bessel 3 process}

Let $X_t(x) $ be the usual Bessel $3$ process that starts at $x \in \mathbb{R}^{+}$, it is the $L = \frac{1}{2} \partial^2_r + \frac{1}{r} \partial_r $ diffusion, and $0$ is an entrance boundary for  the process $ X$.
Let $\tau := \inf \{t >0, X_t(0)= 1 \} $, in this section we will be interested in the two sides estimate of the tail distribution of $ \mathbb{P} [ \tau >t ] $. We recall well known results,  with the aim of comparing them to the following section 2.3.
 The following  well know identity in law (e.g. (4.21)  in \cite{BYP}),  is also a consequence of \eqref{pilap} below.
$$ \tau  \eql \sum_{m=1}^{ \infty} \epsilon_m(\frac{j^2_{\frac12,m}}{2} ) , $$
where $  \epsilon_m (\frac{j^2_{\frac12,m}}{2} )$ are independent standard exponential variables with parameter $\frac{j^2_{\frac12,m}}{2} , $ 
and  $j_{\frac12,m} $ is the zero of the Bessel function $ J_{\frac12} $ of the first kind
 
 $$J_{\frac12}(x) =  \big( \frac{x}{2} \big)^{\frac12} \sum_{m=1}^{\infty} \frac{(-1)^m}{m! \Gamma(m+1+ \frac12)} \big( \frac{x}{2} \big)^{2m} = \sqrt{\frac{2}{\pi}} \frac{\sin(x)}{\sqrt{x}},$$
for the last equality use that $\Gamma(x +1)= x \Gamma(x) .$

 So $J_{\frac12,1} = \pi $, and 
we obtain the following  lower bound :
\bqn{lower}
 \exp (-\frac{\pi^2}{2} t) = \mathbb{P} [ \epsilon_1(\frac{j_{\frac12,1}}{2} )  >t ]  \le  \mathbb{P} [ \tau >t ].
\eqn

For an upper bound, let $L := \frac12 (\partial^2_r + \frac{2}{r} \partial_r) $ be the generator of  $ X $, we will compute all moments of $\tau$, and their equivalent.

\begin{pro}\label{Poisson}
Given $ g \in C_b([0, 1])$, the bounded solution $\phi \in C^2_b$ of the Poisson equation
\bq
 \lt\{\begin{array}{rcl}
  L \phi &=& -g  \\ 
  \phi (1) &=& 0 \\
\end{array}\rt.\eq
is given by:
 \bqn{phinr}
 \fo r\in[0,1],\qquad
  \phi (r) &=&  \int_r^1  \frac{1}{t^2}  \lt(\int_0^t  2s^2 g(s) ds \rt) dt.  
\eqn
    So the Green operator $G$ associated to $L$ is given by 
    \bq 
    \fo g\in C_b([0, 1]),\,\fo r\in[0,1],\qquad
    G[g](r) = 2\int_r^1  \frac{1}{t^2}  \lt(\int_0^t  s^2 g(s) ds \rt) dt= 2\wi{G}[g](r)
    \eq
 \end{pro}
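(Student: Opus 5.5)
The plan is to verify the formula directly and then argue uniqueness among bounded solutions. The key observation is that $L=\frac12 r^{-2}\partial_r(r^2\partial_r)$ on $(0,1]$. The Poisson equation $L\phi=-g$ is therefore equivalent to
\[
\partial_r\bigl(r^2\phi'(r)\bigr)=-2r^2g(r).
\]
Integrating once from $0$ to $r$ gives $r^2\phi'(r)=C-\int_0^r 2s^2g(s)\,ds$. Integrating again from $r$ to $1$ and using $\phi(1)=0$ gives
\[
\phi(r)=\int_r^1\frac{1}{t^2}\Bigl(\int_0^t 2s^2g(s)\,ds\Bigr)dt-C\Bigl(\frac1r-1\Bigr).
\]
Every solution on $(0,1]$ with $\phi(1)=0$ has this form.

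Next I will show that the candidate (\ref{phinr}), i.e.\ the case $C=0$, is bounded and $C^2_b$ on $[0,1]$. Since $|\int_0^t 2s^2g(s)ds|\le \frac23\|g\|_\infty t^3$, the integrand in $t$ is bounded by $\frac23\|g\|_\infty t$. Hence $\phi$ is well defined, continuous on $[0,1]$, and bounded. On $(0,1]$ we have $\phi'(r)=-r^{-2}\int_0^r2s^2g(s)ds$, so $\phi'(r)=O(r)$ and $\phi'(0)=0$ by continuity. For the second derivative,
\[
\phi''(r)=2r^{-3}\int_0^r2s^2g(s)ds-2g(r).
\]
Writing $r^{-3}\int_0^r 2s^2 g(s)\,ds=\int_0^1 2u^2 g(ru)\,du$ shows this term is continuous up to $0$, with limit $\frac23 g(0)$. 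Thus $\phi''$ extends continuously with $\phi''(0)=-\frac23 g(0)$. A direct substitution then checks $\frac12\phi''+\frac1r\phi'=-g$ on $(0,1]$. At $0$ the equation is read by continuity: $L\phi(0)=\frac32\phi''(0)=-g(0)$, which holds because $\phi'(r)/r\to\phi''(0)$. This step, regularity at the entrance boundary $0$, is the only delicate point, and the rescaling $s=ru$ handles it.

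For uniqueness, any other solution differs from (\ref{phinr}) by $-C(\frac1r-1)$, which is unbounded near $0$ unless $C=0$. So boundedness forces $C=0$. Alternatively, since $0$ is an entrance boundary, Itô's formula together with optional stopping at $\tau$ gives the probabilistic identity $\phi(r)=\EE_r\bigl[\int_0^\tau g(X_s)\,ds\bigr]$.

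This identity justifies calling $G$ the Green operator. The expression $G[g]=2\wi G[g]$ is simply the factor $2$ pulled out of the inner integral.
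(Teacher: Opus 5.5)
Your proposal is correct and takes the same route as the paper, which proves this by a direct computation. Your version also spells out two points the paper leaves implicit: the $C^2$ regularity at the entrance point $0$, via the rescaling $s=ru$, and uniqueness among bounded solutions, since the homogeneous solution $\frac{1}{r}-1$ is unbounded near $0$.
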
 
 
 \begin{proof}
 It is a straight forward computation.
\end{proof}  

Let $u_{0}\df \un $, the constant function taking the value 1 on $[0,1]$, and consider the following sequence  $(u_{k})_{k\in\NN} $, defined inductively by bounded solution of

 \bqn{def-u}
\fo k\in\NN,\qquad \lt\{\begin{array}{rcl}
 L u_{k} &=& -k   u_{k-1}\\
    u_{k} (1) &=& 0 .\\
\end{array}\rt.\eqn

 We have for all $k \in \mathbb{Z}_+$,  $$ \frac{u_{k}}{k !} =  G^{\circ k}[\un]\df G[G[\cdots [G [\un]]\cdots]]= 2^k \wi{G}^{\circ k}[\un]. $$

\begin{pro}\label{mean-eq}
For any $k\in\ZZ^+$,
 \bq  \EE[\tau^{k}] &=& u_{k}(0)\, = k! G^{\circ k}[\un] (0) = k!2^k \wi{G}^{\circ k}[\un](0) \\
 &=& k!2^k \big( \frac{2}{\pi^{2k}} - \frac{2}{(2\pi)^{2k}} + o (\frac{1}{(2\pi)^{2k}}) \big) \\
 &=& k!2^k \frac{2}{\pi^{2k}}  ( 1- 2^{1-2k} )\zeta(2k)\\
 &  \sim_{k}& k! 2 (\frac{2}{\pi^2})^k ,\eq
where $\zeta$ is the Riemann zeta function.
\end{pro}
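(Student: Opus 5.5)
The plan has two parts: first identify the moments of $\tau$ with $u_k(0)$ through a Kac-type moment formula, and then evaluate $G^{\circ k}[\un](0)$ by expanding in the Dirichlet eigenfunctions of $L$ on $[0,1]$.

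\emph{Step 1 (moment formula).} We show by induction that $\EE[\tau^k]=u_k(0)$. For $k\ge1$, apply Itô's formula to $t\mapsto t^{k-1}\cdot$ and the functions $u_j(X_t(0))$, or more simply use Dynkin's formula in the form
\[
\EE\Big[\int_0^{\tau} k\,s^{k-1}u_{1}(X_s)\,ds\Big]\ \text{and}\ \ldots
\]
The cleanest route is a Markov-property induction. Because $Lu_k=-ku_{k-1}$ and $u_k(1)=0$, Dynkin's formula applied up to $\tau$ gives
\[
u_k(x)=\EE_x\Big[\int_0^\tau k\,u_{k-1}(X_s)\,ds\Big].
\]
By the induction hypothesis, $u_{k-1}(y)=\EE_y[\tau^{k-1}]$. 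The Markov property at time $s$, together with $\int_0^\tau k(\tau-s)^{k-1}ds=\tau^k$, then yields $u_k(x)=\EE_x[\tau^k]$. Two facts justify these steps:
\begin{itemize}
\item $\tau$ has exponential moments, by comparison with the exponential tail of the law in the previous paragraph;
\item $u_k\in C^2_b$, by Proposition \ref{Poisson}, and $0$ is an entrance boundary.
\end{itemize}
Together these make the optional stopping legitimate. The identity $u_k=k!\,G^{\circ k}[\un]=k!\,2^k\wi G^{\circ k}[\un]$ was already noted.

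\emph{Step 2 (spectral decomposition).} The operator $L$ is symmetric on $L^2([0,1],r^2dr)$ with Dirichlet condition at $1$ and regularity at $0$. Its eigenfunctions are
\[
\varphi_m(r)=\frac{\sin(m\pi r)}{r},\qquad -L\varphi_m=\frac{m^2\pi^2}{2}\varphi_m,\qquad \|\varphi_m\|^2=\frac12 ,
\]
and they form an orthogonal basis. This follows from $L(f/r)=\frac{1}{2r}f''$, i.e.\ the conjugation of the radial Laplacian with the one-dimensional one, together with completeness of $\{\sin(m\pi r)\}$ in $L^2[0,1]$. Consequently $\wi G$ is a compact self-adjoint operator with eigenvalues $(m\pi)^{-2}$. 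The expansion of $\un$ in this basis is
\[
\un=\sum_m c_m\varphi_m,\qquad c_m=2\int_0^1 r\sin(m\pi r)\,dr=\frac{2(-1)^{m+1}}{m\pi}.
\]
Moreover $\varphi_m(0)=m\pi$.

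\emph{Step 3 (evaluation at $0$; the delicate point).} The series $\sum_m c_m (m\pi)^{-2k}\varphi_m$ converges in $L^2$, but for $k=1$ it does not converge absolutely in sup norm. Pointwise evaluation at $r=0$ therefore needs an argument. I would avoid the issue by writing evaluation at $0$ as a scalar product. From \eqref{phinr}, after Fubini,
\[
\wi G[g](0)=\int_0^1 s^2 g(s)\Big(\frac1s-1\Big)ds=\langle K_0,g\rangle_{L^2(r^2dr)},\qquad K_0(s)=\frac1s-1 .
\]
Note that $K_0\in L^2(s^2ds)$. By self-adjointness,
\[
\langle K_0,\varphi_m\rangle=\wi G[\varphi_m](0)=(m\pi)^{-2}\,m\pi .
\]
Then Parseval applied to $\wi G^{\circ k}[\un](0)=\langle K_0,\wi G^{\circ (k-1)}\un\rangle$ gives
\[
\wi G^{\circ k}[\un](0)=\sum_{m\ge1}\frac{\langle K_0,\varphi_m\rangle\, c_m (m\pi)^{-2(k-1)}}{\|\varphi_m\|^2}
=\sum_{m\ge1}\frac{2(-1)^{m+1}}{(m\pi)^{2k}}
=\frac{2}{\pi^{2k}}\big(1-2^{1-2k}\big)\zeta(2k).
\]
Here we used the Dirichlet eta identity $\eta(s)=(1-2^{1-s})\zeta(s)$. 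As a sanity check, $k=1$ gives $1/6$, which matches the direct computation of $\wi G[\un](0)$.

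\emph{Step 4 (asymptotics).} The first two terms of the alternating series give
\[
\frac{2}{\pi^{2k}}-\frac{2}{(2\pi)^{2k}}+O\big((3\pi)^{-2k}\big),
\]
and $(3\pi)^{-2k}=o((2\pi)^{-2k})$. Since $\zeta(2k)\to1$ and $2^{1-2k}\to0$, we obtain $\EE[\tau^k]\sim_k 2\,k!\,(2/\pi^2)^k$. The main obstacle is Step 3, namely justifying the evaluation at the singular endpoint $r=0$; the kernel representation handles it.
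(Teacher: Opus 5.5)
Your proof is correct. Step 1 is the same as the paper's induction, but for the evaluation of $\wi G^{\circ k}[\un](0)$ you take a different route from the paper's proof of this proposition.

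\textbf{The two routes.} The paper uses the explicit action of $\wi G$ on monomials, $\wi G[r^m]=\frac{1-r^{m+2}}{(m+2)(m+3)}$. From it, the paper derives the recursion $\sum_{m=0}^k b_{k-m}\frac{(-1)^m}{(2m+1)!}=0$ for $b_k=\wi G^{\circ k}[\un](0)$ and recognises $\sum_n b_n z^{2n}=z/\sin z$. It then gets the asymptotics from the residues at $\pm\pi,\pm2\pi$, and the full alternating series ``by a similar argument'' (pushing the contour to infinity). You instead diagonalise $\wi G$ in $L^2([0,1],r^2dr)$ with the eigenfunctions $\sin(m\pi r)/r$ and evaluate at $0$ through the kernel $K_0(s)=1/s-1$. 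This gives the exact series $\sum_m 2(-1)^{m+1}(m\pi)^{-2k}$ directly for every $k\ge1$, and the asymptotics are then immediate.

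\textbf{What each buys.} Your viewpoint is the one the paper adopts later in Section 2.3, and in Proposition~\ref{iso-spec} and Theorem~\ref{moment} for spheres and projective spaces, where no closed-form generating function is available. The paper's route needs no spectral theory. In exchange it produces the closed forms $\sqrt{2\lambda}/\sin\sqrt{2\lambda}$ and $\sqrt{2\lambda}/\sinh\sqrt{2\lambda}$, which are used in Proposition~\ref{two-side} and Section~\ref{complexe}.

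\textbf{Minor points.}
(a) In your Parseval display, $c_m$ is already the normalised expansion coefficient, so dividing again by $\|\varphi_m\|^2=\frac12$ makes the middle expression equal to $4(-1)^{m+1}(m\pi)^{-2k}$. The correct summand is $\langle K_0,\varphi_m\rangle\, c_m(m\pi)^{-2(k-1)}$, which gives your final value $2(-1)^{m+1}(m\pi)^{-2k}$, as your $k=1$ check confirms.

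(b) The identity $\langle K_0,\varphi_m\rangle=\wi G[\varphi_m](0)$ is just the kernel formula, not self-adjointness. Self-adjointness, i.e.\ symmetry of the kernel $1/(r\vee s)-1$, is what you need for $\langle \wi G^{\circ(k-1)}[\un],\varphi_m\rangle=(m\pi)^{-2(k-1)}\langle\un,\varphi_m\rangle$.

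(c) The ``delicate point'' is milder than you claim. Since $|\varphi_m|\le m\pi$ and $|c_m|\le 2/(m\pi)$, the terms of $\sum_m c_m(m\pi)^{-2k}\varphi_m$ are bounded by $2(m\pi)^{-2k}$. So the series converges absolutely and uniformly on $[0,1]$ for every $k\ge1$; only the $k=0$ expansion of $\un$ fails. Direct evaluation at $r=0$, as in Section 2.3, is therefore legitimate, although your $K_0$ device is also valid.

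(d) In Step 1 you do not need exponential moments of $\tau$. Boundedness of $u_k$, $u_k(1)=0$, $u_{k-1}\ge0$ and $\tau<\infty$ a.s.\ already let you pass to the limit $t\to\infty$ in $\EE[u_k(X_{t\wedge\tau})]$ and $\EE[\int_0^{t\wedge\tau}u_{k-1}(X_s)\,ds]$.
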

\begin{proof}
Suppose by induction that $ u_{k}(x)  =  \EE_x [\tau^k] .$ This is clearly satisfied for $k=0$.
Let $ X_0 = x \in [0,1]$, using It\^o's formula, we have for all $ 0\le t \leq  \tau$, 
$$ u_{k+1}(X_t) - u_{k+1}(x) = - (k+1)\int_0^t u_{k} (X_s)  ds + M_t $$
where $ (M_t)_{t\in[0,\tau]}$ is a martingale. Consider this equality with $t=\tau$, take expectation and use the Markov property to get
 \begin{align*}
u_{k+1}(x)  &=  (k+1)\EE_x \lt[\int_0^{\tau} u_{k}(X_s) ds\rt] \\
             &= (k+1)\EE_x \lt[\int_0^{\tau} \EE_{X_s}[\tau^k] ds\rt] \\
             &= (k+1)  \EE_x \lt[\int_0^{\tau} (\tau - s)^k ds\rt] = \EE_x[\tau^{k+1}] .\\
\end{align*}  

Note that $$\wi{G}[\un](r)= \int_r^1  \frac{1}{t^2}  \lt(\int_0^t  s^2 \un(s) ds \rt) = \frac16 (1-r^2) $$
and for all $m \in \ZZ^+$ 
$$\wi{G}[x\mapsto x^m](r)= \int_r^1  \frac{1}{t^2}  \lt(\int_0^t  s^2 s^m ds \rt) = \frac{1}{(m+3)(m+2)}(1-r^{m+2}). $$
Let $b_k = \wi{G}^{\circ k}[\un](0) ,$ using the above formula we obtain the following recursive formula $b_0=1$ and for $ 1 \le k $ :
$$b_{k} = -\big( \sum_{m=1}^ {k} b_{k-m} \frac{(-1)^m}{(2m+1)!}   \big) ,$$
i.e. for $ 1 \le k,$ $$ b_0 = 1, \quad and \quad  \sum_{m=0}^ {k} b_{k-m} \frac{(-1)^m}{(2m+1)!}  =0 .$$
We  identify this recursive equation as follows  for $z \in \mathbb{C}$
\bqn{eq-b}
\frac{1}{  \sum_{n=0}^{\infty}  \frac{(-1)^n}{(2n+1)!} z^n} = \sum_{n=0}^{\infty} b_n z^n,
\eqn
namely
\bqn{cauchy}
\frac{z}{\sin{z}} = \frac{1}{  \sum_{n=0}^{\infty}  \frac{(-1)^n}{(2n+1)!} z^{2n}} = \sum_{n=0}^{\infty} b_n z^{2n}.
\eqn

Since $\frac{1}{\sin{z}} $ is a meromorphic function and the five first poles are $ \{ -2\pi , -\pi , 0 , \pi , 2\pi \} $, let $\pi < R <2\pi $, and for $\theta \in [0, 2 \pi]$ and define the following contours for  $ 0 <\eta $ small enough
\bq
 \gamma_1(\theta) = \pi + \eta (\cos(-\theta)+ i \sin(-\theta)) \\
 \gamma_2(\theta) = -\pi + \eta (\cos(-\theta) + i \sin(-\theta)) \\
 \gamma_3(\theta) = R(\cos(\theta) + i \sin(\theta)). \\
 \eq
Note that $\gamma_3$ is anticlockwise  whereas $\gamma_1,\gamma_2 $ are clockwise, using Cauchy formula, for all $n \ge 1$:
\begin{align*}
b_n& = \frac{1}{2i\pi} \int_{\gamma_1 \cup \gamma_2 \cup \gamma_3 } \frac{z}{\sin(z) z^{2n+1}} dz \\
&= \frac{1}{2i\pi} \int_{\gamma_1 \cup \gamma_2 } \frac{z}{\sin(z) z^{2n+1}} dz + O_n(\frac{1}{R^{2n-1}})\\
&= -res(\frac{z}{\sin(z) z^{2n+1}}, \pi ) - res (\frac{z}{\sin(z) z^{2n+1}}, -\pi) + O_n(\frac{1}{R^{2n-1}})\\
&= -res(\frac{-\pi}{(z-\pi) \pi^{2n+1}}, \pi ) - res (\frac{\pi}{(z+\pi) (-\pi)^{2n+1}}, -\pi) + O_n(\frac{1}{R^{2n-1}})\\
&=\frac{2}{\pi^{2n}} + O_n(\frac{1}{R^{2n-1}})\\
&\sim_n  \frac{2}{\pi^{2n}},\\
\end{align*}
where in the forth line we have used that $ \sin(z) \sim_{z\sim \pi} -(z-\pi) $ and $ \sin(z) \sim_{z\sim -\pi} -(z+\pi) .$
Adding  two clockwise contours  around the second pole of $ -2\pi $ and $2\pi$ and take as in the above computation $ 2\pi< R < 3\pi$, and the equivalent of $ \sin $ at $ -2\pi $ and $2\pi$
, we obtain $$ b_n = \frac{2}{\pi^{2n}} - \frac{2}{(2\pi)^{2n}} + o (\frac{1}{(2\pi)^{2n}}), \forall n \ge 1 \quad and \quad b_0 = 1. $$
With similar argument we have in fact :
$$b_n  = \frac{2}{\pi^{2n}} \sum_{k=1}^{\infty } \frac{(-1)^{k+1}}{k^{2n}}  =  \frac{2}{\pi^{2n}}  ( 1- 2^{1-2n} )\zeta(2n),$$
where $\zeta$ is the Riemann zeta function.
\end{proof}

\begin{pro}\label{two-side}
For all $ 0 \le  \lambda  < \frac{\pi^2}{2}$ and $ t>0$:
 
 $$ \exp (-\frac{\pi^2}{2} t)  \le  \mathbb{P} [ \tau >t ]  \le  \phi_{\tau}(\lambda)  \exp (-\lambda t)  ,$$
 where  $ \phi_{\tau}(\lambda) = \EE [\exp{(\lambda \tau})] $ is the moment generating function of $\tau$.
\end{pro}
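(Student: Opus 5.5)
The lower bound has already been obtained in \eqref{lower}. Indeed, by the identity in law $\tau \eql \sum_{m\ge1}\epsilon_m(\frac{j^2_{\frac12,m}}{2})$ with independent nonnegative summands, we have $\tau\ge \epsilon_1(\frac{\pi^2}{2})$. Hence $\PP[\tau>t]\ge \PP[\epsilon_1>t]=e^{-\frac{\pi^2}{2}t}$. So only the upper bound remains, and we intend to obtain it by the exponential Markov (Chernoff) inequality: for $\lambda\ge0$ and $t>0$,
$$\PP[\tau>t]=\PP[e^{\lambda\tau}>e^{\lambda t}]\le e^{-\lambda t}\,\EE[e^{\lambda \tau}]=\phi_\tau(\lambda)e^{-\lambda t}.$$

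The real content is to show that $\phi_\tau(\lambda)<\infty$ for $0\le\lambda<\frac{\pi^2}{2}$, since otherwise the bound is vacuous. We will prove this using Proposition \ref{mean-eq}. By monotone convergence (all terms are nonnegative),
$$\phi_\tau(\lambda)=\sum_{k\ge0}\frac{\lambda^k}{k!}\EE[\tau^k]=\sum_{k\ge0}\lambda^k 2^k b_k,\qquad b_k=\wi{G}^{\circ k}[\un](0).$$
By \eqref{cauchy}, $\sum_k b_k z^{2k}=\frac{z}{\sin z}$. The nearest singularities of this function are at $\pm\pi$, so the power series in $w=z^2$ has radius of convergence $\pi^2$. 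Equivalently, we can use the explicit formula $b_k=\frac{2}{\pi^{2k}}(1-2^{1-2k})\zeta(2k)\le \frac{2\zeta(2)}{\pi^{2k}}$ for $k\ge1$. Either way, $\sum_k (2\lambda)^k b_k$ converges as soon as $2\lambda<\pi^2$, i.e. $\lambda<\frac{\pi^2}{2}$. In fact, from \eqref{cauchy} one obtains the closed form $\phi_\tau(\lambda)=\frac{\sqrt{2\lambda}}{\sin\sqrt{2\lambda}}$, which is consistent with the product formula for the sum of exponentials.

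The only delicate point, though a mild one, is justifying the interchange of expectation and series. This is immediate from Tonelli's theorem, because $\tau\ge0$ and all terms are nonnegative. The growth estimate on $b_k$ is where the argument really rests, and it is supplied by Proposition \ref{mean-eq}.
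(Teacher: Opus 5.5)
Your proof is correct and follows the paper's route. The lower bound comes from \eqref{lower}, via the representation of $\tau$ as a sum of independent exponentials, and the upper bound is the exponential Markov inequality, with $\phi_\tau(\lambda)<\infty$ for $0\le\lambda<\frac{\pi^2}{2}$ obtained from the moment formula of Proposition~\ref{mean-eq}. Your justification of that finiteness (Tonelli, then either $b_k\le 2\zeta(2)\pi^{-2k}$ or the radius of convergence of $z/\sin z$) is slightly tidier than the paper's $o(\cdot)$ expansion; at $\lambda=0$ the identity $\PP[\tau>t]=\PP[e^{\lambda\tau}>e^{\lambda t}]$ fails literally, but there the bound is just $\PP[\tau>t]\le 1$.
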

\begin{proof}

 We have for $ 0 \le \lambda $,  $  \phi_{\tau}(\lambda) = \sum_{k=0}^{\infty} \lambda^k \frac{\EE[\tau^k]}{k!}, $
 using Proposition \ref{mean-eq} we write 
 \bq   \phi_{\tau}(\lambda) =  1+  2 \sum_{k=1}^{\infty}  \big( \frac{2 \lambda}{\pi^2} \big)^k   \big( 1 - \frac{1}{(2)^{2k}} + o (\frac{1}{(2)^{2k}}) \big)    \eq
 and $  \phi_{\tau} (\lambda)$  is convergent for  $ 0 \le  \lambda  < \frac{\pi^2}{2}.$  
 Hence we have the following upper bound, using Markov inequality, for all $ 0 \le  \lambda  < \frac{\pi^2}{2}$ and $ t>0$:
 
 $$  \mathbb{P} [ \tau >t ] = \mathbb{P} [ \lambda \tau >  \lambda t ]   \le  \phi_{\tau}(\lambda)  \exp (-\lambda t)  $$  
 \end{proof}

\begin{rem}
Using the moment generating function of $\tau$, Proposition \ref{mean-eq}, \eqref{cauchy}, and the following Weierstrass decomposition :
$$ \sin(\pi z)= \pi z \prod_{n=1}^{\infty} \Big( 1 - (\frac{z}{n})^2 \Big),$$
we get 
\bqn{pilap}
 \phi_{\tau}(\lambda) &=& \EE [\exp{(\lambda \tau})] = \sum_{k=0}^{\infty}  (2\lambda ) ^k \wi{G}^{\circ k}[\un](0) \notag \\ 
 &=&  \sum_{k=0}^{\infty}  (2\lambda ) ^k b_k = \frac{\sqrt{2\lambda}}{\sin{\sqrt{2\lambda}}} = \prod_{n=1}^{\infty} \frac{1}{ 1 - (\frac{2\lambda}{\pi^2 n^2})}.\notag \\ 
 \eqn
 After identification we get 
 $$ \tau  \eql \sum_{m=1}^{ \infty} \epsilon_m(\frac{\pi^2 m^2}{2} ) , $$
where $  \epsilon_m ( \frac{\pi^2 m^2}{2} )$ are independent standard exponential variables with parameter $\frac{\pi^2 m^2}{2} $.
 
\end{rem} 
 
\begin{rem}

Using the above Proposition \ref{mean-eq} we get  $\EE[\tau^{k}] = k!2^k \big( \frac{2}{\pi^{2k}} - \frac{2}{(2\pi)^{2k}} + o (\frac{1}{(2\pi)^{2k}}) \big) $, and Stirling formula,  we get $$ (\EE[\tau^{k}])^{\frac1k} \sim \frac{2}{\pi^2}  (k!)^{\frac1k} \sim \frac{2k}{\pi^2 e} .$$
\end{rem}
We get the following cutoff phenomenon, in the sens of definition \ref{def-cut}, for Brownian motion on the torus.


\begin{theo}There is a cutoff in separation  for the Brownian motion on the flat torus $ \mathbb{T}^n := (\mathbb{R}/ 2 \mathbb{Z})^n $, with cutoff time $  \frac{2\log(n)}{\pi^2} $.\label{cut-off-tore}
\end{theo}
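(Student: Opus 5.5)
By \eqref{sharp} the separation discrepancy equals the tail of the maximum,
$$\fs(\cL(\bb^n(t)),\mu_{\mathbb{T}^n})=\PP[\overline{\tau}_n>t]=1-\big(1-\PP[\tau>t]\big)^n,$$
because the $\tau_i$ are i.i.d.\ with the law of $\tau$. The plan is to put $t_n^{\pm}=(1\pm r)\frac{2\log n}{\pi^2}$ and control $n\,\PP[\tau>t_n^\pm]$ using the two-sided bound of Proposition \ref{two-side}. Two elementary facts suffice for the conclusion: if $n p_n\to 0$ then $1-(1-p_n)^n\to 0$, and if $n p_n\to\infty$ then $1-(1-p_n)^n\ge 1-e^{-np_n}\to 1$.

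For the lower cutoff, take $r\in(0,1)$ and use the lower bound \eqref{lower}:
$$n\,\PP[\tau>t_n^-]\ge n\exp\Big(-\frac{\pi^2}{2}(1-r)\frac{2\log n}{\pi^2}\Big)=n^{r}\longrightarrow\infty,$$
so $\fs(\cL(\bb^n(t_n^-)),\mu_{\mathbb{T}^n})\to1$.

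For the upper cutoff, take $r>0$ and use the upper bound of Proposition \ref{two-side} with a fixed $\lambda<\pi^2/2$ chosen close enough that $\lambda(1+r)\frac{2}{\pi^2}>1$. For example $\lambda=\frac{\pi^2}{2}\cdot\frac{1+r/2}{1+r}$ works, since then $\lambda(1+r)\frac{2}{\pi^2}=1+\frac r2$. Since $\phi_\tau(\lambda)<\infty$ for this fixed $\lambda$ (Proposition \ref{two-side}), we get
$$n\,\PP[\tau>t_n^+]\le \phi_\tau(\lambda)\,n\,n^{-(1+r/2)}=\phi_\tau(\lambda)\,n^{-r/2}\longrightarrow 0.$$
Then the union bound $\PP[\overline\tau_n>t]\le n\PP[\tau>t]$ gives the vanishing of the separation discrepancy.

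The only delicate point is this upper bound. One cannot simply take $\lambda=\pi^2/2$, because $\phi_\tau$ blows up there. The remedy is to let $\lambda$ depend on $r$ but not on $n$, as above, which keeps the constant $\phi_\tau(\lambda)$ finite and $n$-independent. Both conditions of Definition \ref{def-cut} then hold with $a_n=\frac{2\log n}{\pi^2}$.
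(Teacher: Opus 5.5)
Your proof is correct and follows essentially the same route as the paper. You reduce to $\PP[\tau>t]$ via \eqref{sharp} and independence, get the $r\in(0,1)$ side from \eqref{lower}, and get the $r>0$ side from Proposition \ref{two-side} with an $r$-dependent, $n$-independent $\lambda<\pi^2/2$. The only differences are cosmetic: your $\lambda=\frac{\pi^2}{2}\frac{1+r/2}{1+r}$ versus the paper's $\frac{\pi^2}{2}\frac{1}{1+r/2}$, and your use of the union bound and $1-(1-p)^n\ge 1-e^{-np}$ where the paper manipulates $(1-p)^n$ directly.
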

\begin{proof}
Using the upper bound in Proposition \ref{two-side} we get for all $ 0 \le t $ and $ 0 \le \lambda <\frac{\pi^2}{2}$,
we have :

\bq
\mathbb{P}[ \overline{\tau}_n \le t ]& =& (\mathbb{P}[ {\tau} \le t ])^n 
        \ge   \big( 1- \phi_{\tau}(\lambda)  \exp (-\lambda t)   \big) ^n .\\
\eq
  
 For all $ r > 0$, let $ \lambda_r = \frac{\pi^2}{2} \big( \frac{1}{1+\frac12 r} \big) < \frac{\pi^2}{2}$ and $ r'= \frac{r}{2(1+ \frac12 r)} > 0$ we have $ \frac{2}{\pi^2} (1+r)= \frac{1}{\lambda_r} (1+ r' )$ and
 
  \bq
\mathbb{P}[ \overline{\tau}_n > \frac{2\log(n)}{\pi^2} (1+r) ] &= & \mathbb{P}[ \overline{\tau}_n > \frac{\log(n)}{\lambda_r} (1+r') ]\\
& =& (1 - \mathbb{P}[ {\overline{\tau}_n} \le \frac{\log(n)}{\lambda_r} (1+r')  ]  ) \\
        &\le &  1-  \big( 1- \phi_{\tau}(\lambda_r)  \exp^{-\log(n) (1+r') }   \big) ^n \\
        &=& 1 -(1- \frac{\phi_{\tau}(\lambda_r)}{n^{1+r'} })^n \to 0 \\
\eq

Using the lower  bound in Proposition \ref{two-side} we get for $ 0 <r <1$
\bq
\mathbb{P}[ \overline{\tau}_n < \frac{2\log(n)}{\pi^2} (1-r)  ]& =& (\mathbb{P}[ {\tau} < \frac{2\log(n)}{\pi^2} (1-r)  ])^n \\
&=& ( 1 - \mathbb{P}[ {\tau} \ge \frac{2\log(n)}{\pi^2} (1-r)  ])^n \\
&\le & ( 1 -  \exp^{ -\log(n) (1-r)}  )^n \\
&=& (1- \frac{1}{n^{1-r}})^n \to 0 \\
\eq
Using the two computations above and \eqref{sharp} we get the conclusion.
\end{proof}

\subsection{The profile function for torus} \label{complexe}
Let $ \phi(\lambda) = \EE [\exp{(-\lambda \tau})] $ be the Laplace transform of $\tau$, using \eqref{eq-b} we get the well known formula :
\begin{align}
 \phi(\lambda) &= \sum_{n=0}^{\infty} (- \lambda)^n \frac{\EE[\tau^n]}{n!} = \sum_{n=0}^{\infty} (- 2\lambda)^n b_n \notag \\
 & = \frac{1}{  \sum_{n=0}^{\infty}  \frac{1}{(2n+1)!} (2\lambda)^{n}} = \frac{\sqrt{2\lambda}}{ \sinh(\sqrt{2\lambda} )} \label{sinh}.
 \end{align}
  
Since the poles of $ \phi $ are $ \{ -\frac{\pi^2 n^2}{2},\quad n \in \mathbb{Z}^*    \} $.
   Using formula (Theorem 8.1 \cite{Widder}) for the inverse of the Laplace transform we have $ \forall x \ge 0 $   

 $$ \mathbb{P} [\tau \le x ]  =\lim_{R \to \infty } \frac{1}{2i\pi } \int_{1- i R}^{1 + i R} e^{z x} \frac{\phi (z)}{z} dz.  $$ 
 
 Let $D_R = D(0, R) \cap \{ Re(z) \le 1 \} $, and $ \mathcal{P} := \{ -\frac{\pi^2 n^2}{2},\quad n \in \mathbb{Z}    \}$  be the set of poles of $ \frac{\phi (z)}{z}$ ,  we have by Cauchy formula
 \begin{align}
 \frac{1}{2i\pi } \int_{\partial D_R} e^{z x} \frac{\phi (z)}{z} dz &= \sum_{\alpha \in D_R \cap  \mathcal{P} }   Res (  e^{z x} \frac{\phi (z)}{z} ; \alpha   )  \notag \\ 
 &=   \sum_{\alpha \in D_R \cap  \mathcal{P} }  e^{ \alpha x} Res (   \frac{\phi (z)}{z} ; \alpha   )   \notag \\ 
 &=  \sum_{n = 1}^{\lfloor \sqrt{ \frac{2R}{\pi^2} } \rfloor  } e^{ -\frac{\pi^2n^2}{2}  x} Res (   \frac{\phi (z)}{z} ;  -\frac{\pi^2n^2}{2}    ) +1 \notag \\
 &=  \sum_{n = 1}^{\lfloor \sqrt{ \frac{2R)}{\pi^2} } \rfloor  } 2(-1)^{n} e^{ -\frac{\pi^2n^2}{2}  x} +1 , \label{res} \\
 \notag
 \end{align}
 where we use in the last equality that $ Res (   \phi (z);  -\frac{\pi^2n^2}{2}    ) = \pi^2 n^2 (-1)^{n+1}$ for  $n>0$ and $ Res (   \frac{\phi (z)}{z} ;  0   ) = 1$.
 
 Let $C_R = \mathcal{C}(0, R) \cap \{ Re(z) \le 1 \} $ the left boundary of $D_R$,
 and let $z := Re^{i \theta} $ for $\theta \in [\arccos (\frac{1}{R}), 2\pi - \arccos (\frac{1}{R})  ] $,  since   $ \vert \phi (z) \vert = \frac{\sqrt{2 \vert z \vert }}{\sqrt{ \sinh^2( \sqrt{\vert z \vert + Re(z)})  + \sin^2(\sqrt{\vert z \vert  - Re(z)}  ) } } =  \vert \phi (\bar{z}) \vert  $, we have for all $ x>0$
 $$ \vert \int_{C_R} e^{z x} \frac{\phi (z)}{z} dz \vert \le 2 \sqrt{2R} \int_{ \arccos (\frac{1}{R}) }^{\pi}    \frac{e^{ Rx \cos(\theta)}}{\sqrt{ \sinh^2( \sqrt{2R} \cos(\theta / 2)  + \sin^2( \sqrt{2R} \sin(\theta / 2 )}}    d\theta .$$  Chose    $R$ such that $ \sqrt{2R}  = 2m\pi+ \frac{\pi}{2}  $, and  cut the integral in the left hand side of the above equation into three parts
 $$ I_1(R) := \int_{ \arccos (\frac{1}{R}) }^{\pi /2 }    \frac{e^{ Rx \cos(\theta)}}{\sqrt{ \sinh^2( \sqrt{2R} \cos(\theta / 2))  + \sin^2( \sqrt{2R} \sin(\theta / 2 ))}}    d\theta, $$
 $$ I_2(R) := \int_{\pi /2}^{ \pi - \frac{1}{\sqrt{2R}} }    \frac{e^{ Rx \cos(\theta)}}{\sqrt{ \sinh^2( \sqrt{2R} \cos(\theta / 2))  + \sin^2( \sqrt{2R} \sin(\theta / 2 ))}}    d\theta, $$
 $$ I_3(R) := \int_{\pi - \frac{1}{\sqrt{2R}} }^{ \pi }    \frac{e^{ Rx \cos(\theta)}}{\sqrt{ \sinh^2( \sqrt{2R} \cos(\theta / 2))  + \sin^2( \sqrt{2R} \sin(\theta / 2 ))}}    d\theta.$$
 We have :
 $$I_1(R) \le \frac{1}{ \sinh( \sqrt{R}) } \int_{ \arccos (\frac{1}{R}) }^{\pi /2 } {e^{ Rx \cos(\theta)}} d\theta \le  \frac{\pi}{ 2\sinh( \sqrt{R}) } e^x,$$
 also since for $ \theta \in [\pi - \frac{1}{\sqrt{2R}}, \pi  ] $, $\frac12 \le \vert \sin( \sqrt{2R} \sin(\theta /2 )\vert  $ we have for large $R$ 
 $$ I_3(R) \le 2 \int_{\pi - \frac{1}{\sqrt{2R}} }^{ \pi } e^{ Rx \cos(\theta)} d\theta  \le 2 e^{-Rx/\sqrt{2}}.$$
 For the last one, we have
 \begin{align*}
 I_2(R) & \le \frac{1}{\sinh( \sqrt{2R}\sin( \frac{1}{2\sqrt{2R}} ))} \int_{\pi /2}^{ \pi - \frac{1}{\sqrt{2R}} }e^{ Rx \cos(\theta)}  d\theta  \\
 & \le \frac{1}{\sinh( \sqrt{2R}\sin( \frac{1}{2\sqrt{2R}} ))}  \int_0^1 \frac{e^{-(Rx) y}}{\sqrt{1-y^2}} dy = O(1) \int_0^1 \frac{e^{-(Rx) y}}{\sqrt{1-y^2}} dy ,\\
  \end{align*}
  and 
  \begin{align*}
  \int_0^1 \frac{e^{-(Rx) y}}{\sqrt{1-y^2}} dy &= \int_0^{1/\sqrt{R} } \frac{e^{-(Rx) y}}{\sqrt{1-y^2}} dy + \int_{1/\sqrt{R} }^1 \frac{e^{-(Rx) y}}{\sqrt{1-y^2}} dy \\
  &\le \frac{1}{Rx\sqrt{1-1/R}} + e^{-\sqrt{R}x} \int_{0}^1 \frac{1}{\sqrt{1-y^2}} dy =O(\frac{1}{R}).
  \end{align*}
  Hence $ I_1(R) + I_2(R) + I_1(R) = O(\frac{1}{R})$ and for all $ x>0$ we have
 \bqn{bord}  \lim_{R \to \infty } \frac{1}{2i\pi }  \int_{C_R} e^{z x} \frac{\phi (z)}{z} dz  = 0  .\eqn
Hence passing to the limit in \eqref{res} and using \eqref{bord} we get the following well know formula for all $ x>0$
\bqn{laplace} 
 \mathbb{P} [\tau \ge x ]  = \sum_{n = 1}^{\infty} 2(-1)^{n+1} e^{ -\frac{\pi^2n^2}{2}  x} = 2 e^{ -\frac{\pi^2}{2}  x} + o(e^{ -\frac{\pi^2}{2}  x} ). 
\eqn
 
 The aim of this section is to complete Theorem \ref{cut-off-tore} by the shape of the relaxation in separation  to equilibrium. In particular, we are interested in proving existence of optimal window and  computing the limiting profile.
 
 \begin{theo} \label{profile-tore}The separation to equilibrium for the Brownian motion on the flat torus $ \mathbb{T}^n := (\mathbb{R}/ 2 \mathbb{Z})^n $, has the following asymptotic profile,  for any $ c \in \mathbb{R} $ we have : 
   $$ \lim_{n\ri\iy} \fs(\bb^n( \frac{2\log(n)}{\pi^2} + c ),\mu_{\mathbb{T}^n}) = 1-e^{-2 e^{-\frac{\pi^2}{2}c } }, $$
 the above convergence is uniform on all compact  and the windows of the cutoff sequence  $(\frac{2\log(n)}{\pi^2},1)$ is strongly optimal in the sens of definition \ref{def2}. 
\end{theo}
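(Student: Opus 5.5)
By \eqref{sharp}, the separation discrepancy equals the tail of the maximum $\overline{\tau}_n$ of $n$ i.i.d.\ copies of $\tau$. Set $t_n = \frac{2\log(n)}{\pi^2} + c$. The first step uses independence:
$$\fs(\cL(\bb^n(t_n)),\mu_{\mathbb{T}^n}) = \PP[\overline{\tau}_n > t_n] = 1 - \big(1-\PP[\tau > t_n]\big)^n .$$
Everything therefore reduces to the asymptotics of $n\,\PP[\tau>t_n]$. For this we use the series \eqref{laplace}:
$$\PP[\tau \ge x] = 2e^{-\frac{\pi^2}{2}x} + o\big(e^{-\frac{\pi^2}{2}x}\big), \qquad x\to\infty.$$
The law of $\tau$ has no atoms, so $\ge$ and $>$ coincide.

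Since $t_n\to\infty$ for fixed $c$, we get
$$n\,\PP[\tau>t_n] = n\big(2e^{-\log n - \frac{\pi^2}{2}c}\big)(1+o(1)) = 2e^{-\frac{\pi^2}{2}c}(1+o(1)).$$
In particular $\PP[\tau>t_n]\to 0$, so
$$n\log\big(1-\PP[\tau>t_n]\big) = -n\,\PP[\tau>t_n]\,(1+O(\PP[\tau>t_n])) \to -2e^{-\frac{\pi^2}{2}c}.$$
Exponentiating gives the limit $1-e^{-2e^{-\frac{\pi^2}{2}c}}$.

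To get the remainder explicitly and uniformly, bound the alternating series in \eqref{laplace} for $x\ge 1$:
$$\Big|\PP[\tau\ge x]-2e^{-\frac{\pi^2}{2}x}\Big| \le 2\sum_{m\ge 2} e^{-\frac{\pi^2 m^2}{2}x} \le C e^{-2\pi^2 x}.$$
This gives $\big|n\PP[\tau>t_n]-2e^{-\frac{\pi^2}{2}c}\big| \le C' n^{-3}e^{-2\pi^2 c}$.

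Uniformity on a compact $c\in[-K,K]$ follows from two observations. First, the error term above is bounded uniformly in $c\in[-K,K]$, and $t_n\ge 1$ for $n$ large uniformly in $c$. Second, the map $u\mapsto 1-(1-u/n)^n$ converges to $1-e^{-u}$ uniformly for $u$ in compacts. A simpler alternative is monotonicity: both sides are monotone in $c$ and the limit is continuous, so a Dini/Pólya-type argument upgrades pointwise to locally uniform convergence.

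For strong optimality in the sense of Definition \ref{def2}, take $a_n=\frac{2\log n}{\pi^2}$ and $b_n=1$, so that $b_n=o(a_n)$. The limit profile $F(c)=1-e^{-2e^{-\pi^2c/2}}$ lies strictly in $(0,1)$ for every real $c$. Moreover $F(r)\to 0$ as $r\to\infty$ and $F(-r)\to 1$. Since the liminf and limsup coincide with $F(\pm r)$, the three conditions of Definition \ref{def2} and the strict inequalities $0<F(r)\le F(-r)<1$ follow immediately.

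The only delicate point is justifying that the $o(\cdot)$ in \eqref{laplace} is controlled uniformly, which is handled by the explicit geometric bound on the tail of the theta-type series above. The rest is elementary.
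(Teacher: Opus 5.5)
Your proposal is correct and follows essentially the same route as the paper's proof. In both, sharpness and independence reduce the separation to $1-(1-\PP[\tau>t_n])^n$, and the series \eqref{laplace} gives $n\PP[\tau>t_n]\to 2e^{-\frac{\pi^2}{2}c}$. Your explicit bound $Ce^{-2\pi^2 x}$ on the remainder of that series, together with the monotonicity argument, just supplies the details behind the uniform-on-compacts and strong-optimality claims, which the paper asserts without proof.
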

\begin{proof}
Let $ c \in \mathbb{R} $.
Using \eqref{sharp} and \eqref{laplace} we have :
\begin{align*}
 \fs(\cL(\bb^n(\frac{2\log(n)}{\pi^2} + c ))),\mu_{\mathbb{T}^n}) =& \PP[\overline{\tau}_n>\frac{2\log(n)}{\pi^2} + c )] \\
 =& 1 - \mathbb{P}[ \overline{\tau}_n \le \frac{2\log(n)}{\pi^2} + c )]   \\
 =& 1 - (\mathbb{P}[ \tau \le \frac{2\log(n)}{\pi^2} + c )])^n   \\
  =& 1 - (1 -  2 e^{ -\frac{\pi^2}{2}  (\frac{2\log(n)}{\pi^2} + c ) } + o(e^{ -\frac{\pi^2}{2}  (\frac{2\log(n)}{\pi^2} + c ) } ) )^n   \\
  =& 1 - (1 -  2 \frac{e^{ -\frac{\pi^2}{2} c  }}{n} + o(\frac1n ) )^n  \to   1-e^{-2 e^{-\frac{\pi^2}{2}c } }, \\
 \end{align*} 
 and  the  convergence is  uniform on all compact.
\end{proof}   

\begin{rem}
The above computation shows that we have the following convergence in law :
\begin{align*}
  \overline{\tau}_n - \frac{2 \ln(n)}{\pi^2} \underset{n \to +\infty}{\overset{\mathcal{L}}{\longrightarrow}} Gumbel \big( \frac{2 \ln(2)}{\pi^2}, \frac{2}{\pi^2} \big)
\end{align*}
where $Gumbel $ is the  Gumbel distribution.
\end{rem}

\subsection{An alternative point of view for hitting time using intertwining}

The goal of this section is to give an alternative approach of the precedent sections that will be interesting for finding the cutoff profile for the Brownian motion on spheres and projective spaces in the next section. The precedent approach need at some place an explicit computation of the moment generated function and it's factorizations \eqref{pilap} or  an explicit  Laplace transform    for the computation of the poles \eqref{sinh}, and in the forthcoming section the Laplace transform will be not so explicit.

Let $ B$ be a one dimensional Brownian motion that start at $0$, i.e. a $ \Delta :=  \frac12 \frac{\partial^2}{\partial^2x} $ diffusion.  
Let $X $ be the usual Bessel $3$ process that starts at $0$, i.e. the $L = \frac{1}{2} \partial^2_r + \frac{1}{r} \partial_r $ diffusion. It is well known that e.g. Theorem 3 in \cite{zbMATH07470497}  that the generator of  $B$ can be intertwined with the generator of $X $ in the following sens :

   \bqn{entrela}
    L \Lambda = \Lambda  \Delta   
    \eqn 
where for $ r\ge 0$, $ \Lambda (f) (r) = \frac{\int_{-r}^{r} f(x) dx }{2r} $, note that in this case \eqref{entrela} also follows by direct computation.
Let $\mathbb{Z}_+ := \{ 0,1,2,3...  \}  $ and $ \mathbb{Z}^+ := \{1,2,3,...  \}  $.
Let $\varphi_n(x) = \cos(n\pi x) $ for $ n \in \mathbb{Z}_+ $, be the Neumann eigenfunctions of   $\Delta $ in  $C^{2}[-1,1] $ associated to the eigenvalues $ \lambda_n = - \frac{(n\pi)^2}{2} $. 
The eigenfunctions $\{ \varphi_0/ \sqrt{2}, \, \varphi_n \quad n \in \mathbb{Z}^+ \}$  form  an orthonormal  system of functions in $L^2([-1;1], dx) $.
Using \eqref{entrela} we get that $\Lambda \varphi_n $ for $n \in \mathbb{Z}^+ $ is an eigenfunction of $L $ associated to $ \lambda_n$, with boundary conditions $f(0) = 1 $ and $f(1)=0$.
\begin{pro}
The family  $ \{ n\pi \sqrt{2} \Lambda \varphi_n , \quad    n \in \mathbb{Z}^+\}$  is an orthonormal complete  system of functions in $L^2([0;1], x^2dx) $ where $x^2dx  $ is the invariant measure of $ L$. 
 Also  $ \{ \wi{\Lambda \varphi_n} :=  n\pi \sqrt{2} \Lambda \varphi_n , n \in \mathbb{Z}^+  \} $ are an Hilbert basis of  eigenfunction of $ G $ associated respectively to the eigenvalue $ \frac{-1}{\lambda_n}$.

\end{pro}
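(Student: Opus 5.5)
The plan is to compute $\Lambda\varphi_n$ explicitly, check orthonormality by a direct integral, deduce completeness from completeness of the cosine system on $[-1,1]$ through the map $\Lambda$, and obtain the Green operator statement from the eigenfunction relation.

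\textbf{Explicit form.} For $n\in\mathbb{Z}^+$ and $r\in(0,1]$,
$$\Lambda\varphi_n(r)=\frac{1}{2r}\int_{-r}^{r}\cos(n\pi x)\,dx=\frac{\sin(n\pi r)}{n\pi r},$$
which extends smoothly to $r=0$ with value $1$ and satisfies $\Lambda\varphi_n(1)=0$. Hence
$$\int_0^1 \Lambda\varphi_n(r)\Lambda\varphi_m(r)\,r^2dr=\frac{1}{nm\pi^2}\int_0^1\sin(n\pi r)\sin(m\pi r)\,dr=\frac{\delta_{nm}}{2n^2\pi^2},$$
so the normalization $n\pi\sqrt2$ gives an orthonormal system in $L^2([0,1],x^2dx)$.

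\textbf{Completeness.} Consider the unitary map $U:L^2([0,1],x^2dx)\to L^2([0,1],dx)$, $Uf(r)=\sqrt2\, r f(r)$. It sends $\wi{\Lambda\varphi_n}$ to $\sqrt2\sin(n\pi r)$. These functions form the Dirichlet sine basis of $L^2([0,1],dx)$, which is complete; one can see this by odd extension to $[-1,1]$ and completeness of the Fourier system there. Completeness of $\{\wi{\Lambda\varphi_n}\}$ follows. This is the only step needing a genuine argument, and the substitution $f\mapsto rf$ (the classical conjugation of the radial Bessel-3 Laplacian into the one-dimensional Dirichlet Laplacian) settles it.

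\textbf{Eigenfunctions of $G$.} By \eqref{entrela}, $L\Lambda\varphi_n=\Lambda\Delta\varphi_n=\lambda_n\Lambda\varphi_n$; this can also be checked directly on $\frac{\sin(n\pi r)}{n\pi r}$. Set $g=\Lambda\varphi_n$, which is bounded and continuous on $[0,1]$, and $\phi=-\lambda_n^{-1}\Lambda\varphi_n$. Then $\phi$ is bounded, $C^2$, satisfies $L\phi=-g$, and has $\phi(1)=0$. By the uniqueness of the bounded solution in Proposition \ref{Poisson}, $G[\Lambda\varphi_n]=-\frac{1}{\lambda_n}\Lambda\varphi_n$.

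If uniqueness needs justification, argue as follows. The difference of two bounded solutions solves $Lh=0$ with $h(1)=0$. Such $h$ has the form $h=a+b/r$, and boundedness at $0$ forces $b=0$, then $h(1)=0$ forces $a=0$.

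Finally, $G$ extends to a bounded, in fact compact and self-adjoint, operator on $L^2([0,1],x^2dx)$, since its kernel is square integrable. A complete orthonormal family of eigenvectors is therefore a Hilbert basis diagonalizing $G$, with eigenvalues $-1/\lambda_n=\frac{2}{n^2\pi^2}$.
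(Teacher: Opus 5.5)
Your proposal is correct and follows the paper's argument step for step: the explicit formula $\Lambda\varphi_n(r)=\frac{\sin(n\pi r)}{n\pi r}$, orthonormality by direct integration, completeness by multiplying by $r$ and using completeness of the sine system on $[0,1]$, and the eigenrelation for $G$ from \eqref{entrela} together with Proposition~\ref{Poisson}. Your uniqueness check for the bounded Poisson solution and the Hilbert--Schmidt bound on $G$ fill in points the paper leaves implicit. One harmless slip: $f\mapsto\sqrt2\,rf$ is $\sqrt2$ times an isometry and sends $\wi{\Lambda\varphi_n}$ to $2\sin(n\pi r)$, whereas the unitary map is $f\mapsto rf$, with image $\sqrt2\sin(n\pi r)$; completeness is unaffected either way.
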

\begin{proof}
For $ n \in \mathbb{Z}^+ $, $\Lambda \varphi_n (r) = \frac{\sin(n\pi r) }{n\pi r }$, so for $n,m \in \mathbb{Z}^+ $
\begin{align*}
\langle \Lambda \varphi_n , \Lambda \varphi_m \rangle_{L^2([0;1], x^2dx)} &= \int_{0}^{1}   \frac{\sin(n\pi r) }{n\pi r }  \frac{\sin(m\pi r) }{m\pi r }r^2 dr \\
&=  \frac{1}{2nm \pi^2} \delta_{nm},
\end{align*}
hence $ \{ n\pi \sqrt{2} \Lambda \varphi_n , \quad    n \in \mathbb{Z}^+\}$  is an orthonormal system of functions.
For the completness, let $f \in L^2([0;1], x^2dx )$ such that for all $n \in \mathbb{Z}^+ $,    $ \langle f , \Lambda \varphi_n \rangle_{L^2([0;1], x^2dx)} =0$ then 
$$ \langle x f , \sin(n\pi x) \rangle_{L^2([0;1], dx)} =0  \quad \forall n \in \mathbb{Z}^+  .$$ 
Since $ \sin(n\pi x)$ for $n \in \mathbb{Z}^+ $ is a complete system of functions in ${L^2([0;1], dx)}$, we get that $f =0 $.

Since by \eqref{entrela} we have $L  \wi{\Lambda \varphi_n} = \lambda_n \wi{\Lambda \varphi_n}, $ so for $n \in \mathbb{Z}^+  $
 $$ L  \frac{- \wi{\Lambda \varphi_n}}{\lambda_n} =   -\wi{\Lambda \varphi_n}  ,$$ and by definition of $G$ in Proposition \ref{Poisson} we have 
 $$ G( \wi{\Lambda \varphi_n} ) = \frac{-1}{\lambda_n} \wi{\Lambda \varphi_n} .$$
 For $f \in L^2([0;1], x^2dx )$ we have $$ G^{\circ k}(f)(r) = \sum_{n=1}^{\infty} \Big( \frac{-1}{\lambda_n}  \Big)^k \wi{\Lambda \varphi_n} (r) \langle f , \wi{\Lambda \varphi_n }\rangle_{L^2([0;1], x^2dx)} .$$ 

\end{proof}   
\begin{cor} The hitting time of the Bessel $3$ process $\tau := \inf \{t >0, X_t(0)= 1 \} $ have the following density :
 
 $$ f_{\tau}(x) =    \sum_{n=1}^{\infty} (-1)^{n+1} (n\pi)^2 e^{- \frac{(n\pi)^2x}{2}} \un_{[0, \infty[}(x).$$
\end{cor}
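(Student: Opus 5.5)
We plan to read off the law of $\tau$ from its moments, combining Proposition \ref{mean-eq} with the spectral expansion of $G$ in the Hilbert basis $\{\wi{\Lambda\varphi_n}\}$ just established. Set $e_n:=\wi{\Lambda\varphi_n}=n\pi\sqrt2\,\Lambda\varphi_n$, so that $e_n(r)=\sqrt2\,\frac{\sin(n\pi r)}{r}$ and $e_n(0)=n\pi\sqrt2$. Let $\mu_n:=-1/\lambda_n=\frac{2}{(n\pi)^2}$.

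\textbf{Step 1: coefficients of $\un$.} We compute
$$\langle \un, e_n\rangle_{L^2([0,1],x^2dx)}=\sqrt2\int_0^1 r\sin(n\pi r)\,dr=\sqrt2\,\frac{(-1)^{n+1}}{n\pi}.$$

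\textbf{Step 2: moments.} For $k\ge1$, the expansion $G^{\circ k}(f)=\sum_n \mu_n^k e_n\langle f,e_n\rangle$ gives
$$G^{\circ k}[\un](r)=\sum_{n\ge1}\Big(\frac{2}{(n\pi)^2}\Big)^k \,2(-1)^{n+1}\frac{\sin(n\pi r)}{n\pi r}.$$
For $k\ge1$ this series converges absolutely and uniformly on $[0,1]$, with terms bounded by $C n^{-2k}$. Hence the $L^2$ identity is a pointwise identity (both sides are continuous), and we may evaluate at $r=0$:
$$\EE[\tau^k]=k!\,G^{\circ k}[\un](0)=k!\sum_{n\ge1}2(-1)^{n+1}\Big(\frac{2}{(n\pi)^2}\Big)^k .$$
This agrees with Proposition \ref{mean-eq}.

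\textbf{Step 3: identify the density.} Define
$$g(x):=\sum_{n\ge1}(-1)^{n+1}(n\pi)^2e^{-(n\pi)^2x/2}\un_{[0,\infty)}(x).$$
We expect the justification of this step to be the main obstacle, because $\int_0^\infty x^k\sum_n |\cdot|\,dx$ diverges for $k=0$. We therefore handle the integral directly for $k\ge1$. There $\int_0^\infty x^k(n\pi)^2e^{-(n\pi)^2x/2}dx=k!\,2^{k+1}(n\pi)^{-2k}$, which is summable in $n$, so Fubini applies and
$$\int x^k g(x)\,dx=k!\sum_n 2(-1)^{n+1}\Big(\frac{2}{(n\pi)^2}\Big)^k=\EE[\tau^k].$$

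For $k=0$ we use an Abelian limit, or equivalently the fact that the Laplace transform of $g$ is $\sum_n (-1)^{n+1}\frac{(n\pi)^2}{\lambda+(n\pi)^2/2}$. This sum converges conditionally and equals $\phi(\lambda)=\sqrt{2\lambda}/\sinh\sqrt{2\lambda}$ from \eqref{sinh}, by the partial-fraction expansion of $1/\sinh$. Letting $\lambda\to0$ gives total mass $1$.

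To see that $g\ge0$, we do not argue from the series termwise. Instead we note that $g=-\frac{d}{dx}\sum_n 2(-1)^{n+1}e^{-(n\pi)^2x/2}$. This tail function is $\PP[\tau\ge x]$ by \eqref{laplace}, hence nonincreasing.

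\textbf{Step 4: uniqueness.} The moments satisfy $\EE[\tau^k]\le C k!\,(2/\pi^2)^k$. Carleman's condition therefore holds, so the moments determine the law, and $\tau$ has density $g$.

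Alternatively, one can bypass both Step 3's positivity argument and this moment-determinacy argument: $\phi_\tau(\lambda)=\sum_k\lambda^k\EE[\tau^k]/k!$ is finite near $0$, and it coincides with the moment generating function of $g$. Analytic continuation of the Laplace transform then identifies the law directly.
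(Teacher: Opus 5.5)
Your Steps 1 and 2 are correct, and you justify the evaluation at $r=0$ more carefully than the paper does. The identity $\int_0^\infty x^k g(x)\,dx=\EE[\tau^k]$ for $k\ge1$ is also fine. The gap is in showing that $g\,dx$ is a probability measure, which Step 4 needs, and it has three parts.

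\textbf{The $k=0$ step is wrong.} Since $\frac{(n\pi)^2}{\lambda+(n\pi)^2/2}\to 2$, the series $\sum_n(-1)^{n+1}\frac{(n\pi)^2}{\lambda+(n\pi)^2/2}$ does not converge at all, conditionally or otherwise. It only has an Abel-type value, obtained by assigning $\frac12$ to $\sum_n(-1)^{n+1}$. The termwise Laplace transform is also unjustified: $\sum_n(n\pi)^2e^{-(n\pi)^2x/2}\sim c\,x^{-3/2}$ as $x\to0$, so you do not yet know that $g\in L^1$.

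\textbf{The alternative route has the same hole.} Identifying $\int e^{\lambda x}g(x)\,dx$ with $\sum_k\lambda^k\EE[\tau^k]/k!$ requires $\int e^{|\lambda|x}|g(x)|\,dx<\infty$, which is again integrability of $g$ at $0$.

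\textbf{The positivity argument defeats the purpose.} It imports \eqref{laplace}. That is not logically wrong, but \eqref{laplace} is the tail of $\tau$. Differentiating it termwise (the derived series converges uniformly on $[\varepsilon,\infty)$) gives the corollary immediately, so Steps 1--4 then do no work. The point of this corollary, as the remark after it says, is to recover \eqref{laplace} from the Neumann spectrum alone, without the contour integration of Section 2.2 or the closed form \eqref{sinh}. Your argument relies on both.

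\textbf{The missing idea} is to give up one power of $x$ (equivalently, one factor of $\lambda$), which removes the non-integrable singularity at $0$. The paper sums your moment series into the Laplace transform:
\[
\EE[e^{-\lambda\tau}]=1+2\sum_{n\ge1}(-1)^{n+1}\frac{\lambda}{\lambda_n-\lambda}
=1-\lambda\,\mathcal{L}\Big(x\mapsto 2\sum_{n\ge1}(-1)^{n+1}e^{\lambda_n x}\Big)(\lambda).
\]
Here $\sum_n\frac{1}{\lambda-\lambda_n}$ converges absolutely, so Fubini--Tonelli applies. Lemma \ref{lemme-Lap} with $m=1$ then gives $\PP[\tau\ge x]=2\sum_n(-1)^{n+1}e^{\lambda_n x}$, and the density follows by differentiation.

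\textbf{A fix within your own framework.} Compare the finite signed measures $x\,\PP_\tau(dx)$ and $x\,g(x)\,dx$.
\begin{enumerate}
\item The second is legitimate: $x\sum_n(n\pi)^2e^{-(n\pi)^2x/2}\sim c\,x^{-1/2}$ is integrable at $0$.
\item Both have exponential moments. For the second, $\int_0^\infty e^{ax}\,x\sum_n(n\pi)^2e^{-(n\pi)^2x/2}\,dx=\sum_n\frac{(n\pi)^2}{((n\pi)^2/2-a)^2}<\infty$ for $a<\pi^2/2$; for the first, this follows from your moment bound in Step 2.
\item By Step 3, both measures have $j$-th moment equal to $\EE[\tau^{j+1}]$ for every $j\ge0$.
\item Hence their Laplace transforms are analytic near $0$ with the same Taylor coefficients. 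They therefore coincide on $\{\Re z<a\}$, in particular on $i\mathbb{R}$, so the two measures are equal.
\item Since $\PP[\tau=0]=0$, this gives $\PP_\tau=g\,dx$.
\end{enumerate}
Positivity and unit mass of $g$ then come out as consequences rather than inputs, and you need neither \eqref{laplace}, \eqref{sinh}, nor Carleman.
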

\begin{proof}
 The Laplace transform of $\tau$,
 \begin{align*}
 \phi(\lambda) &= \sum_{k=0}^{\infty} (- \lambda)^k \frac{\EE[\tau^k]}{k!} = \sum_{k=0}^{\infty} (- \lambda)^k  G^{\circ k}(\un)(0) \\
 &=1 +  \sum_{k=1}^{\infty} (- \lambda)^k  \sum_{n=1}^{\infty} \Big( \frac{-1}{\lambda_n}  \Big)^k \wi{\Lambda \varphi_n} (0) \langle \un , \wi{\Lambda \varphi_n} \rangle_{L^2([0;1], x^2dx)} \\ 
   &=1 +  \sum_{k=1}^{\infty} (- \lambda)^k  \sum_{n=1}^{\infty} \Big(\frac{-1}{\lambda_n}  \Big)^k 2 (n\pi)^2 \langle \un , {\Lambda \varphi_n} \rangle_{L^2([0;1], x^2dx)} ,\\
    &= 1+ 2  \sum_{n=1}^{\infty} (-1)^{n+1} \frac{\lambda}{\lambda_n} \frac{1}{ 1- \frac{\lambda }{\lambda_n}     } = 1- \lambda 2  \sum_{n=1}^{\infty} (-1)^{n+1} \frac{ 1}{ \lambda - \lambda_n     } ,\\
    &=1 - \lambda 2  \sum_{n=1}^{\infty} (-1)^{n+1} \mathcal{L} (x \mapsto  e^{\lambda_n x} )( \lambda) \\
    &=1- \lambda \mathcal{L} ( x \mapsto 2  \sum_{n=1}^{\infty} (-1)^{n+1}   e^{\lambda_n x} )( \lambda)\\
 \end{align*}
  where in the fourth  equality we use that $ \langle \un , \Lambda \varphi_n \rangle_{L^2([0;1], x^2dx)} = \frac{(-1)^{n+1} }{(n\pi)^2 } $ and Fubini Tonelly Theorem (for $ \vert \lambda \vert < 1$ and holomorphic prolongation), and where $\mathcal{L}$ is the Laplace transform.
  From e.g. Lemma \ref{lemme-Lap} below, we get :
  \begin{align*}
   \mathbb{P} [\tau \ge x ]  = \sum_{n = 1}^{\infty} 2(-1)^{n+1} e^{ \lambda_n x},
   \end{align*}
   and the result after differentiation for $x>0 $. 
\end{proof}

\begin{rem}
The above result gives  another proof of \eqref{laplace} that only use the Neumann spectrum. 

 The hitting time of the Bessel $3$ process that start at $ 0\le y < 1 $,  $\tau_y := \inf \{t >0, X_t(y)= 1 \} $ have the following density :
 $$ f_{\tau_y}(x) =    \sum_{n=1}^{\infty} (-1)^{n+1}   \frac{\sin(n\pi y) }{ y }   (n\pi) e^{- \frac{(n\pi)^2x}{2}} \un_{[0, \infty[}(x).$$
 \end{rem}

\section{The cutoff profile in separation for Spheres and projective spaces}
\subsection{Intertwining}
In this section we will be specially interested in computing the cutoff in separation  profile function for Brownian motion in $n$ dimensional spheres $ \mathbb{S}^n$, real, complex, and quaternionic  projective space  resp. $ \mathbb{P}^n(\mathbb{R})$, $ \mathbb{P}^n(\mathbb{C})$ and $ \mathbb{P}^n(\mathbb{H})$. Where the metric come from writing the manifold as homogeneous space see \cite{Jaming},  all of them are normalized in order to have the same diameter $\pi $, and $n$ will be the real dimension. We will be particularly interested in this type of manifold since their are compact two-point homogeneous spaces  and hence harmonics manifolds in the sens that all sphere in this spaces have constant mean curvature. A compact Riemannian manifold $(M,g)$ with distance $d_g(.,.) $ is  two-point homogeneous if for all $x_1,x_2,y_1,y_2 \in M $, such that  $d_g( x_1,x_2)=d_g( y_1,y_2)$  their exist an isometry $I : M \to M$ such that $ I(x_1) = I(y_1)$ and $ I(x_2) = I(y_2)$. These space  were fully characterized by Wang  \cite{Wang}. The complete list of then are (where we drop the Cayley projective plane since their no varying dimension to look cutoff phenomenon)  :

\begin{enumerate}
\item the $n$ dimensional sphere   $  \mathbb{S}^n$, n= 1,2,3, ...,
\item the real projective space   $\mathbb{P}^n(\mathbb{R})  $, n= 2,3,4,...,
\item the complex projective space   $\mathbb{P}^n(\mathbb{C})  $, n= 4,6,8,...,
\item the quaternionic projective space   $\mathbb{P}^n(\mathbb{H})  $, n= 8,12,16,...,
\end{enumerate}	
In the following, $ M^n$ will be one of the following space : $ \mathbb{S}^n$, $ \mathbb{P}^n(\mathbb{R})$, $ \mathbb{P}^n(\mathbb{C})$ or $ \mathbb{P}^n(\mathbb{H})$.
Let $\Delta $ be Laplace-Beltrami operator  in $ M^n$, and
let $\Delta_n  $  the radial part  of the Laplace-Beltrami operator $\Delta $. It acts on function defined over $[0, \pi] $ (recall that all $M^n $ have been normalized to have the same diameter $ \pi$), and it's well know e.g. Section 3 in  \cite{Jaming} that
 $$  \Delta_n = \partial^2_r + \ln(I_{M^n}'(r))' \partial_r ,$$
 where $ I_{M^n}'(r)$ have the following general expression (defined up to some multiplicative constant) : 
  \bqn{Idef} I_{M^n}'(r) = \sin \big( \frac{\gamma r}{2} \big)^\sigma \sin(\gamma r)^{\rho}, \eqn 
  where the parameter depend one $M^n$ as follow :

\begin{enumerate}
\item $  \mathbb{S}^n$ : $ \sigma =0$, $ \quad {\rho} = n-1$, $  \quad \gamma  = 1$, $ \quad I_{ \mathbb{S}^n}'(r)=  \sin^{n-1}(r) $
\item $\mathbb{P}^n(\mathbb{R})  $ : $ \sigma =0$, $ \quad {\rho} = n-1$, $  \quad \gamma  = \frac12$, $ \quad I_{ \mathbb{P}^n(\mathbb{R})}'(r)=  \sin^{n-1}(\frac{r}{2}) $
 \item $\mathbb{P}^n(\mathbb{C})  $ : $ \sigma =n-2$, $ \quad {\rho} = 1$, $  \quad \gamma  = 1$, $\quad  I_{\mathbb{P}^n(\mathbb{C}) }'(r)=  \sin(r)\sin^{n-2}(\frac{r}{2}) $
\item $\mathbb{P}^n(\mathbb{H})  $ : $ \sigma =n-4$, $ \quad {\rho} = 3$, $  \quad \gamma  = 1$, $\quad  I_{\mathbb{P}^n(\mathbb{H})}'(r)=  \sin^3(r)\sin^{n-4}(\frac{r}{2}) $

\end{enumerate}	
With respect to the above list, define :

  \bqn{parameter2}  \alpha =\frac{n-2}{2}, \quad \beta= \frac{\rho -1 }{2}  .\eqn 
  The eigenvalue of the radial part of the Laplace-Beltrami operator $\Delta_n $ are (after a change of variable $ t=\cos(\gamma \theta )$):
  
  \bqn{eigenvalue} \lt\{\begin{array}{lll}
  \lambda_k  &= -k(k+ \alpha + \beta +1), & \quad  k \in \mathbb{Z}_+    \quad \text{if}\quad M^n \neq \mathbb{P}^n(\mathbb{R}) \\ 
    \lambda_k & = -k(k+ \frac{n-1}{2}), & \quad  k \in \mathbb{Z}_+     \quad \text{if}\quad M^n = \mathbb{P}^n(\mathbb{R}) \\ 
  \end{array}  \rt.    
  \eqn

\begin{deff}\label{BM}
For any $n\in\NN\setminus\{1\}$,  $X_n\df(X_n(t))_{t\geq 0}$ 
 stands for the Brownian motion on  $M^{n}$ started at $\wi 0   \in M^{n}$ (an arbitrary point ) and time-accelerated by a factor $2$,  i.e.\ the $ \Delta$-diffusion in $M^{n}$.
So the generator of $X_n$ is the Laplacian $ \Delta$ and not the Laplacian divided by 2 as it is sometimes more usual in Probability Theory. 
 \end{deff}

As a consequence of the construction in \cite{zbMATH07470497}, $X_n$ can be intertwined with a process $D\df(D(t))_{t\geq 0}$ taking values in the closed balls of $M^{n}$ centered at $\wi 0$, starting at $\{\wi 0\}$ and absorbed in finite time $\tau_{M^n}$ in the whole set $M^{n}$. In fact the general intertwined process that starts at a domaine $D_0 $  satisfy the following equation (44) in  \cite{zbMATH07470497} :
\bqn{sflow}
\fo t\in[0,\uptau),\,\fo x\in \pa D_t, \qquad d \pa D_t(x)&=&\lt(\sqrt{2}dB_t+2\frac{\usm(\pa D_t)}{\mu(D_t)}dt-\rho_{\pa D_t}(x)dt\rt)\nu_{\pa D_t}(x)
\eqn
where $B $ is a one dimensional Brownian motion, $\nu_{\pa D_t}(x)$ is the exterior normal vector of ${\pa D_t} $ at $x$, $\rho_{\pa D_t}(x)$ is the mean curvature of ${\pa D_t} $ at $x$, $\mu  $ is the Riemannian measure of $M^n $ and $\usm$ is the $(n-1)-$dimensional Hausdorff measure.

Since all the manifold $M^n $ are harmonic, the mean curvature of sphere are constant and depend only on the ray, then if equation \eqref{sflow} start at a ball $B(\wi 0,R(0))\df D(0)$  then $ D_t \df D(t)$ remains a ball.   
Writing $B(\wi 0,R(t))\df D(t)$ for $t\in[0,\tau_{M^n}]$, equation \eqref{sflow} become an equation on the radius, and $R\df(R(t))_{t\in[0,\tau_{M^n}]}$ is solution of  the following  stochastic differential equation :

\bqn{R}
\fo t\in(0,\tau_{M^n}( R_0)),\qquad dR(t)&=&\sqrt{2} dB(t)+b_n(R(t)) dt \quad R(0)= R_0 \in ]0, \pi [\eqn
and 
\bqn{taun}
\tau_{M^n}( R_0)&=& \inf\{t\geq 0\st R(t)= \pi \}\eqn
where  the mapping $b_n$ is given by
\bqn{bn}
\fo r\in(0,\pi),\qquad
b_n(r) &\df& 2\frac{\usm( \partial B(\wi 0,r) )}{ \mu(B(\wi 0,r)) }- \rho_{\partial B(\wi 0,r)} \notag \\
&=& 2\frac{I_{M^n}'(r)}{ I_{M^n}(r) }- \frac{I_{M^n}''(r)}{I_{M^n}'} \notag \\
&=& \f{d}{dr} \ln \lt(\frac{I_{M^n}^2(r)}{I_{M^n}'(r)}\rt) \\
\notag
\eqn
since the volume of the geodesic ball $B(\wi 0,r) $ in $M^{n}$ centered at $\wi 0$  of radius $r\in ]0, \pi]$ is given by (e.g. page 8 of \cite{Jaming})
$$  \mu(B(\wi 0,r)) = c_n \int_0^r  I_{M^n}'(s) ds = c_n I_{M^n}(r) ,$$for some constant  $c_n$, the area of the geodesic sphere 
$$\usm( \partial B(\wi 0,r) ) = c_n  I_{M^n}'(r) $$ and the mean curvature $ \rho_{\partial B(\wi 0,r) } $ of any point in $\partial B(\wi 0,r) $ is given by 
$$  \rho_{\partial B(\wi 0,r)} =  \frac{I_{M^n}''(r)}{I_{M^n}'(r)}.$$
We could check  that as $r$ goes to $0_+$
\bq
b_n(r)&\sim &\f{n+1}{r}\eq

and this is sufficient to insure that 0 is an entrance boundary for $R$, so that starting from 0, it will never return to 0 at positive times. Until to now, $ \tau_{M^n}$ will stand for $\tau_{M^n}(0) $. Let $$L_n := \partial^2_r + b_n(r) \partial_r $$ be the generator of  $ R $   defined in \eqref{R}, in order to clarify the notation we don't write explicitly the dependence  in $M^n $ when their are no possible confusions.

In  \cite{arnaudon:hal-03037469}, several couplings of  $X_n$ and $D$ were constructed, so that for any time $t\geq 0$, the conditional law of $X_n(t)$ knowing the trajectory $D({[0,t]})\df(D(s))_{s\in[0,t]}$ is the normalized uniform law over $D(t)$, which will be denoted $\Lambda_n(D(t),\cdot)$ in the sequel, i.e.
\bqn{loiprod}
\fo t\geq 0,\qquad \cL(X_t\vert  D_{[0,t]})&=&\Lambda_n(D_t,\cdot),\eqn
\par
 where $ \Lambda_n$ is Markov kernel defined as follows : for all $ D$ domain or singleton  of $M^n$
\bqn{Lambdan}
  \fo A\in \cB(M^n),\qquad \Lambda_n(D,A)&\df &
\lt\{
\begin{array}{lcl}  
&\frac{\mu (A\cap D)}{\mu (D)} &\hbox{, if $\mu(D)>0$}\\
&\delta_x(A) & \hbox{, if $D=\{x\}$, with $x\in M^n$}.
\end{array}
\rt.
\eqn

   Furthermore, $D$ is progressively measurable with respect to $X_n$, in the sense that for any $t\geq 0$, $D({[0,t]})$ depends on $X_n$ only through $X_n({[0,t]})$.

In the following  corollary we explicit two intertwining relations, which were constructed in \cite{arnaudon:hal-03037469} Theorems 3.5 and 4.1, enabling to deduce $\tau_{M^n}$ from the Brownian motion $X_n$ (and independent randomness for the second construction). Note that the cutlocus of $\tilde 0$,  $cut(\wi 0) = \{ x \in M^n, s.t. \, d(\wi 0, x)= \pi \}$  e.g 3.35  in\cite{Besse}, and as usual $\mu(cut(\wi 0)) =0. $
\begin{cor}\label{cor1}
Consider the Brownian motion $X_n\df (X_n(t))_{t\geq 0}$ in $M_f^{n}$ described in Definition \ref{BM}. For $x\in M^{n} \backslash\{\wi 0, cut(\wi 0)\}$, let  $N(x)=-\nabla d(\wi 0,\cdot)(x)$, where $d $ is the Riemannian distance in $M^{n}$.
\begin{itemize}
\item[(1)] \textbf{Full coupling}.
 Let $D_1(t)$ be the ball in $M^{n}$ centered at $\wi 0$ with radius $R_1(t)$ solution started at $0$ to the It\^o equation
\bq
dR_1(t)&=&- \langle N(X_n(t)), dX_n(t))\rangle + \left[2 \frac{I_{M^n}''}{I_{M^n}'}(d(\wi 0,X_n(t)))-\frac{I_{M^n}''}{I_{M^n}'}(R_1(t))\right]\, dt
\eq
This evolution equation is considered up to  the hitting time  $\tau_n^{(1)}$ of $\pi$ by $R_1(t)$. 
\item[(2)] \textbf{Full decoupling, reflection of $D$ on $X_n$}.
 Let $D_2(t)$ be the ball in $M^{n}$ centered at $\wi 0$ with radius $R_2(t)$ solution started at $0$ to the It\^o equation
\bq
dR_2(t)&=&-\sqrt{2} dW_t +2dL_t^{R_2}[d(\wi 0,X_n(t))]- \frac{I_{M^n}''}{I_{M^n}'}((R_2(t))\, dt
\eq
\sloppy
where $(W_t)_{t\ge 0}$ is a real-valued Brownian motion independent of $(X_n(t))_{t\ge 0}$ and $(L_t^{R_2}[d(\wi 0,X_n)])_{t\in[0,\tau_n^{(2)}]}$ is the local time  at $0$ of the process $R_2-d(\wi 0,X_n)$. These considerations are valid up to  the hitting time  $\tau_n^{(2)}$ of $\pi$ by $R_2(t)$. \fussy
\end{itemize}

Then  for the above coupling we have
\begin{itemize}
\item[(1)]  $(X_t,D_t^{(1)})_ {0\leq t\leq \tau_n^{(1)}}$ and  $(X_t,D_t^{(2)})_ {0\leq t\leq \tau_n^{(2)}}$ satisfies \eqref{loiprod}.
\item[(2)]   $(\tau_n^{(1)}, (D_1(t))_{t\in [0,\tau_n^{(1)}]})$,  $(\tau_n^{(2)}, (D_2(t))_{t\in [0,\tau_n^{(2)}]})$ and  $(\tau_{M^n}, (D(t))_{t\in [0,\tau_{M^n}]})$ have the same law.
\end{itemize}
  In particular $\tau_n^{(1)}$ and $\tau_n^{(2)}$ satisfies Proposition~\ref{mean-var-harm},  Theorem ~\ref{moment}, and Theorem \ref{queue1} below.

\end{cor}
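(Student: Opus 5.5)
Corollary \ref{cor1} is essentially a specialization of Theorems 3.5 and 4.1 of \cite{arnaudon:hal-03037469} to the harmonic manifolds $M^n$, so the plan is to verify their hypotheses and to identify the radial dynamics. The first step is to write down the general coupling equations from \cite{arnaudon:hal-03037469} for a domain-valued dual $D_t$. In the full coupling, the normal speed of $\pa D_t$ is driven by the martingale part of $d(\wi 0,X_n(t))$ plus a drift. In the full decoupling, it is driven by an independent Brownian motion $W$ and pushed outward by local time when $X_n$ touches $\pa D_t$. Both drifts involve the mean curvature $\rho_{\pa D_t}$ and the Laplacian of the distance function, evaluated at $X_n(t)$.

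The second step is to specialize to balls $D_t = B(\wi 0,R_i(t))$. Because $M^n$ is two-point homogeneous, the mean curvature of $\pa B(\wi 0,r)$ equals $I''_{M^n}(r)/I'_{M^n}(r)$. Also, $\Delta d(\wi 0,\cdot)(x)=\frac{I''_{M^n}}{I'_{M^n}}(d(\wi 0,x))$ off $\{\wi 0\}\cup cut(\wi 0)$. Hence the flows stay in the class of centered balls and reduce to the two It\^o equations stated. To do this I will apply It\^o's formula to $r(t)=d(\wi 0,X_n(t))$ with generator $\Delta$:
\[
dr(t)=\langle \nabla d, dX_n\rangle+\frac{I''_{M^n}}{I'_{M^n}}(r(t))\,dt,
\qquad \text{with} \qquad \langle\nabla d,dX_n\rangle=-\langle N(X_n),dX_n\rangle .
\]
Since $\mu(cut(\wi 0))=0$ and $X_n$ spends zero time there, the cut locus contributes no local-time term for these computations. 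This gives item (1), the intertwining \eqref{loiprod}, directly from the cited theorems.

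For item (2), the equality of laws, I will compute the semimartingale decomposition of each $R_i$ and check that it solves \eqref{R}. In case (1), the martingale part is $-\langle N,dX_n\rangle$. Its quadratic variation is $2\,dt$ because the generator is $\Delta$, so by L\'evy's theorem it equals $\sqrt2\,dB$ for a Brownian motion $B$. The drift does not close in $R_1$ alone, since it depends on $d(\wi 0,X_n)$. This is handled by the intertwining: conditionally on $D_{[0,t]}$, the law of $X_n(t)$ is uniform on $B(\wi 0,R_1(t))$, so the filtered drift becomes
\[
2\,\frac{\int_{B(\wi 0,R)}\Delta d\,d\mu}{\mu(B(\wi 0,R))}-\frac{I''_{M^n}}{I'_{M^n}}(R)=2\frac{I'_{M^n}(R)}{I_{M^n}(R)}-\frac{I''_{M^n}}{I'_{M^n}}(R)=b_n(R),
\]
where the middle equality follows from the divergence theorem. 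Because $R_1$ is adapted to its own filtration (the dual filtration of \cite{arnaudon:hal-03037469}), it is an $L_n$-diffusion started at $0$. Zero is an entrance boundary since $b_n(r)\sim (n+1)/r$, so uniqueness in law holds and the law of $(R_1,\tau^{(1)}_n)$ equals that of $(R,\tau_{M^n})$. Case (2) is analogous: the local-time term $2\,dL$ projects to $2\,\usm(\pa B)/\mu(B)\,dt$ under the uniform conditional law.

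The main obstacle is this projection or filtering step: justifying that the drift, after conditioning on the dual's own filtration, equals $b_n(R)$. This is precisely the content of the intertwining theorems, so I will invoke them rather than reprove it. The remaining statement, that $\tau_n^{(i)}$ satisfy the later results, then follows because those results depend only on the law of $\tau_{M^n}$.
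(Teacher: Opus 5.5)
Your proposal is correct and follows essentially the same route as the paper. The paper obtains the corollary by specializing Theorems 3.5 and 4.1 of \cite{arnaudon:hal-03037469} to centered balls, which remain balls because the $M^n$ are harmonic (the mean curvature of $\pa B(\wi 0,r)$ is $I''_{M^n}(r)/I'_{M^n}(r)$), so that the radius follows \eqref{R}; your filtering computation showing that the projected drift equals $b_n$ is an explicit unpacking of what those theorems contain.

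One minor correction: no cut-locus term appears because $d(\wi 0,X_n(t))\le R_i(t)<\pi$ before $\tau_n^{(i)}$, not because $X_n$ spends zero time on $cut(\wi 0)$. For $\mathbb{P}^n(\mathbb{R})$ the cut locus is a hypersurface, and $d(\wi 0,X_n)$ does in general pick up a local time there.
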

 \par\me

Due to these couplings and to general arguments from Diaconis and Fill \cite{MR1071805} for discrete case and for the continuous case see Proposition
\ref{sep} below, $\tau_{M^n}$ has the same law  as a strong stationary time $ \tau_n^{(1)}$ for $X_n$, meaning that: $\tau_n^{(1)}$ is an $\mathcal{F}^{X_n}$-stopping time, $\tau_n^{(1)}$ and $X_n(\tau_{\tau_n^{(1)}})$ are independent and $X_n(\tau_n^{(1)})$ is uniformly distributed over  $M^{n}$. As a consequence we have 
\bq
\fo t\geq 0,\qquad \fs(\cL(X_n(t)),\cU_{M^n})&\leq & \PP[\tau_n^{(1)}>t] = \PP[\tau_{M^n}>t]\eq
where  the l.h.s.\ is the separation discrepancy between the law of $X_n(t)$ and the uniform distribution $\cU_{M^n}$ over $M^{n}$. Notice that $\cU_{M^n} (B_{M^n}(\wi 0,r)) = \frac{I_{M^n}(r)} {I_{M^n}(\pi)}$ for any $r\in[0,\pi]$.\par

Note that in the above equation \eqref{loiprod} it is possible to change $ t$ by any stopping time $ \tau \le \tau_n^{(1)}$ e.g. \cite{arnaudon:hal-03037469} Theorems 3.5 and 4.1, for completeness let us recall the following proof.

\begin{pro}\label{sep} If $\tau_n^{(1)}  $ is finite almost surely, then $\tau_n^{(1)}$ is a strong stationary time for $X_n$, and 
\bq
\fo t\geq 0,\qquad \fs(\cL(X_n(t)),\cU_{M^n})&\leq & \PP[\tau_n^{(1)}>t]\eq
\end{pro}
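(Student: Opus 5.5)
The plan is to extend the intertwining relation \eqref{loiprod} from deterministic times to the stopping times $\tau_n^{(1)}\wedge t$, and then read off both claims from the fact that $D(\tau_n^{(1)})=M^n$.

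\textbf{Step 1: conditional law at a stopping time.} Take a bounded measurable $f$ and write $\tau:=\tau_n^{(1)}$. We want to show that $\EE[f(X_n(\tau\wedge t))\,\vert\,\mathcal{F}^D_{\tau\wedge t}]=\Lambda_n(D(\tau\wedge t),f)$.
\begin{itemize}
\item First approximate $\tau\wedge t$ by dyadic stopping times $\sigma_k$ with finitely many values $s_j$, taking $\sigma_k$ to be stopping times for the filtration generated by $D$. This is legitimate because $\tau$ is the hitting time of $\pi$ by $R_1$, hence an $\mathcal{F}^D$-stopping time.
\item On each event $\{\sigma_k=s_j\}\in\mathcal{F}^D_{s_j}$, apply \eqref{loiprod} at the deterministic time $s_j$. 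This gives the identity with $\sigma_k$ in place of $\tau\wedge t$.
\item Pass to the limit $k\to\infty$, using path continuity of $X_n$ and of $R_1$ together with continuity of $r\mapsto\Lambda_n(B(\wi 0,r),f)$ for continuous $f$. The identity then extends to bounded measurable $f$ by a monotone class argument.
\end{itemize}

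\textbf{Step 2: independence and uniformity at $\tau$.} Since $\tau<\infty$ a.s., letting $t\to\infty$ gives $D(\tau)=B(\wi 0,\pi)=M^n$. Hence $\Lambda_n(D(\tau),\cdot)=\cU_{M^n}$, which is deterministic. For bounded $g$, using $\{\tau\le s\}\in\mathcal{F}^D_\tau$, we get
\[
\EE[f(X_n(\tau))\un_{\tau\le s}]=\EE[\un_{\tau\le s}\,\Lambda_n(D(\tau),f)]=\cU_{M^n}(f)\,\PP[\tau\le s].
\]
This shows at once that $X_n(\tau)$ is uniform and independent of $\tau$. Moreover, by \eqref{cou2} (progressive measurability of $D$ with respect to $X_n$, using only its own extra randomness in the full coupling), $\tau$ is an $\mathcal{F}^{X_n}$-stopping time. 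So $\tau$ is a strong stationary time.

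\textbf{Step 3: separation bound.} Use the strong Markov property at $\tau$ on the event $\{\tau\le t\}$. Since $\cU_{M^n}$ is invariant for $\Delta$, for any Borel $A$,
\[
\PP[X_n(t)\in A]\ge \PP[X_n(t)\in A,\tau\le t]=\EE\big[\un_{\tau\le t}\,P_{t-\tau}\un_A(X_n(\tau))\big]=\PP[\tau\le t]\,\cU_{M^n}(A).
\]
The last equality uses the independence from Step 2 and invariance of $\cU_{M^n}$. Therefore $d\cL(X_n(t))/d\cU_{M^n}\ge \PP[\tau\le t]$ a.e., and so $\fs(\cL(X_n(t)),\cU_{M^n})\le 1-\PP[\tau\le t]=\PP[\tau>t]$ by \eqref{sepdef}.

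\textbf{Main obstacle.} The delicate step is Step 1, the limit passage from deterministic times to $\tau$. The main point is to verify that the conditioning filtrations match, i.e. that $\tau$ is a stopping time for the filtration of $D$. This holds because $\tau$ is a hitting time of the continuous process $R_1$, but the continuity of $R_1$ up to $\pi$ needs checking. If this turns out to be delicate, the fallback is to invoke directly Theorems 3.5 and 4.1 of \cite{arnaudon:hal-03037469}, which state \eqref{loiprod} at stopping times.
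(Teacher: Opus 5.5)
Your proposal is correct and follows the paper's own argument: apply \eqref{loiprod} at $\tau_n^{(1)}$, where $D^{(1)}=M^n$, to get that $X_n(\tau_n^{(1)})$ is uniform and independent of $\tau_n^{(1)}$, then use the strong Markov property and the invariance of $\cU_{M^n}$ on the event $\{\tau_n^{(1)}\le t\}$. The differences are only in detail. You sketch the stopping-time extension of \eqref{loiprod}, which the paper simply takes from \cite{arnaudon:hal-03037469}, and you prove the independence explicitly where the paper writes ``clearly''. You also conclude directly through the Radon--Nikodym density a.e., instead of passing through the heat kernel with shrinking balls $B(y,\epsilon)$.
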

\begin{proof}
For simplicity write $X_t := X_n(t) $. 
Let $ f : M^n \to \mathbb{R} $ be a bounded measurable function,  apply \eqref{loiprod} at $ \tau_n^{(1)}$, since $D^{(1)}_{\tau_n^{(1)}} = B(\wi 0, \pi) = M^n$ we get :
\begin{align*}
\EE[ f(X_{\tau_n^{(1)}})] &=  \EE[  \EE[ f(X_{\tau_n^{(1)}}) \vert  D^{(1)}_{[0,\tau_n^{(1)}]}  ]\\
&=   \EE[  \Lambda_n(D^{(1)}_{\tau_n^{(1)}}, f)]\\
&= \EE[ \Lambda_n (M^n, f) ] \\
&=  \frac{\int_{M^n} f \, d\mu }{\mu(M^n)}   = \cU_{M^n}(f) .\\
\end{align*}
 Hence $X_{\tau_n^{(1)}}$ is uniformly distributed over  $M^{n}$, and clearly $\tau_n^{(1)}$ and $X_{\tau_n^{(1)}}$ are independent. Also let $ f : M^n \to \mathbb{R}_+ $ be a bounded positive measurable function,   and  $  0 \le t$,  we have :
 \begin{align*}
\EE_{\wi 0}[ f(X_t)] & \ge  \EE_{(\wi 0, 0)}[ f(X_t) \un_{\tau_n^{(1)} \le t} ] \\ 
 & =  \EE_{(\wi 0, 0 )}[ \un_{\tau_n^{(1)} \le t} \EE [f(X_t) \vert \mathcal{F}_{\tau_n^{(1) }} ]]  \\ 
&=  \EE_{(\wi 0, 0)}[ \un_{\tau_n^{(1)} \le t} \EE_{X_{\tau_n^{(1) } }} [f(X_{t-\tau_n^{(1) }})] ]  \quad \text{Strong Markov property} \\ 
&= \cU_{M^n}(f) \mathbb{P}_{0} [\tau_n^{(1)} \le t ],\\ 
\end{align*}
where in the last line we use that $ X_{\tau_n^{(1) }} \sim\cU_{M^n}$   is invariant under  $X$ and $  X_{\tau_n^{(1) }}$ and $\tau_n^{(1)}$  are independent.

Let $y \in M^n$, $\epsilon >0$. Let  $f= \un_{B(y, \epsilon)}   $, and let $p_t( \wi 0 ,x) $ be the heat kernel i.e. $X_t(\wi 0) \sim p_t( \wi 0 ,x) \cU_{M^n} (dx)$,  then the  above inequality become
$$ \frac{  \int  p_t( \wi 0 ,x) \un_{B(y, \epsilon)}  (x) \cU_{M^n} (dx) }{ \cU_{M^n}  (B(y, \epsilon))} \ge \mathbb{P}_{0} [\tau_n^{(1)} \le t ], $$ 
letting $ \epsilon $ goes to $0$, we obtain for all $y \in M^n$,
$$ p_t( \wi 0 ,y)  \ge \mathbb{P}_{0} [\tau_n^{(1)} \le t ],$$
hence 
$$ 1 - p_t( \wi 0 ,y) \le  \mathbb{P}_{0} [\tau_n^{(1)} > t ].$$
and by definition of the separation discrepancy \eqref{sepdef} the result follows.

\end{proof}

\par
\begin{rem}

Note that for any $t\in [0,\tau_n^{(1)})$, the ``opposite pole'' $cut(\wi 0)$ does not belong to the support of $\Lambda(D^{(1)}(t),\cdot)$.
It follows from an extension of Remark 2.39 of Diaconis and Fill \cite{MR1071805} that $\tau_n^{(1)}$ is even a sharp  strong stationary time for $X_n$, meaning
that
\bq
\fo t\geq 0,\qquad \fs(\cL(X_n(t)),\cU_{M^n})&= & \PP[\tau_n^{(1)}>t]\eq
\par
Thus for all $M^n$ the understanding the convergence in separation of $X_n$ toward $\cU_{M^n}$ amounts to understanding the distribution of $\tau_n^{(1)}$ and due to the equality in law the distribution of $\tau_{M^n}$.
\end{rem}

\subsection{Covering time of the dual process using intertwining}

In this section, we will compute the cumulative distribution function of $\tau_{M^n}$.

\begin{pro}\label{Poisson-harm}
Given $ g \in C_b([0, \pi])$, the bounded solution $\phi_n$ of the Poisson equation
\bq
 \lt\{\begin{array}{rcl}
  L_n \phi_ n &=& -g  \\ 
  \phi_n (\pi) &=& 0 \\
\end{array}\rt.\eq
is given by:
 \bqn{phinrn}
 \fo r\in[0,\pi],\qquad
  \phi_n (r) &=&  \int_r^{\pi}  \frac{I_{M^n}'(t)}{I_{M^n}^2(t)}  \lt(\int_0^t  \frac{I_{M^n}^2(s)}{I_{M^n}'(s)} g(s) ds \rt) dt.  
\eqn
    So the Green operator $G_n$ associated to $L_n $ is given by 
    \bqn{gint} 
    \fo g\in C_b([0, \pi]),\,\fo r\in[0,\pi],\qquad
    G_n[g](r) = \int_r^{\pi}  \frac{I_{M^n}'(t)}{I_{M^n}^2(t)}  \lt(\int_0^t  \frac{I_{M^n}^2(s)}{I_{M^n}'(s)} g(s) ds \rt) dt.
    \eqn
 \end{pro}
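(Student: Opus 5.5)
We need to prove Proposition \ref{Poisson-harm}: $\phi_n$ defined by \eqref{phinrn} is the bounded solution of $L_n\phi_n=-g$ with $\phi_n(\pi)=0$. The plan mirrors Proposition \ref{Poisson}: write $L_n$ in divergence form, integrate twice, and fix the two integration constants using boundedness at the entrance boundary $0$ and the boundary condition at $\pi$.

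\emph{Divergence form.} Set $m(r)=\frac{I_{M^n}^2(r)}{I_{M^n}'(r)}$. By \eqref{bn}, $b_n=(\ln m)'$. Hence for $\phi\in C^2$,
$$ m(r)\,L_n\phi(r)=m\phi''+m'\phi'=(m\phi')'(r). $$
The equation $L_n\phi=-g$ is therefore equivalent to $(m\phi')'=-mg$ on $(0,\pi)$. Integrating from $0$ to $r$ gives
$$ m(r)\phi'(r)=C-\int_0^r m(s)g(s)\,ds. $$
Dividing by $m$ and integrating from $r$ to $\pi$ with $\phi(\pi)=0$ gives
$$ \phi(r)=\int_r^\pi \frac{1}{m(t)}\Big(\int_0^t m g\Big)dt - C\int_r^\pi \frac{dt}{m(t)}. $$
This is \eqref{phinrn} up to the $C$-term, since $1/m=I'_{M^n}/I^2_{M^n}$.

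\emph{Behaviour near $0$.} Near $r=0$ we have $I_{M^n}'(r)\sim c\,r^{n-1}$ and $I_{M^n}(r)\sim c\,r^n/n$, so $m(r)\asymp r^{n+1}$ and $1/m(t)\asymp t^{-n-1}$. Consequently $\int_r^\pi dt/m(t)\to\infty$ as $r\to0$. For the first term, $\big|\int_0^t mg\big|\le \|g\|_\infty\, C' t^{n+2}$, so the integrand is $O(t)$ and the first term is bounded and continuous on $[0,\pi]$. Boundedness of $\phi$ therefore forces $C=0$, which gives both existence and uniqueness among bounded solutions. Equivalently, the difference of two bounded solutions satisfies $(m\psi')'=0$, so it equals $C\int_r^\pi 1/m$, which is unbounded unless $C=0$.

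\emph{Behaviour near $\pi$.} At $\pi$ the function $I'_{M^n}$ vanishes (e.g.\ $\sin^{n-1}r$ on the sphere), while $I_{M^n}(\pi)>0$. So $1/m(t)=I'/I^2$ is integrable, and in fact small, near $\pi$. Thus the outer integral converges and $\phi_n(\pi)=0$. Differentiating $\phi_n$ twice on $(0,\pi)$ confirms $L_n\phi_n=-g$ by the divergence identity.

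\emph{Regularity (the main, though mild, obstacle).} It remains to check that $\phi_n\in C^2$ up to the endpoints, as in the $C^2_b$ claim of Proposition \ref{Poisson}. Near $0$, $\phi_n'(r)=-\frac{1}{m(r)}\int_0^r mg=-\frac{r}{n+2}g(0)+o(r)$. Near $\pi$, $\phi_n'=-\frac{I'}{I^2}\int_0^r mg$ extends continuously by $0$. The expression \eqref{gint} for $G_n$ then follows by definition of the Green operator, exactly as in Proposition \ref{Poisson}.
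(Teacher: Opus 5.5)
Your route differs from the paper's, which checks integrability at $0$ and $\pi$ and then verifies directly that the formula satisfies $L_n\phi_n=-g$. You instead derive the formula from the divergence form $m\,L_n\phi=(m\phi')'$, with $m=I_{M^n}^2/I_{M^n}'$, and use the entrance boundary at $0$ to force $C=0$. That part is sound. It also gives something the paper does not prove: uniqueness among bounded solutions. In that argument, the integrability of $1/m$ near $\pi$ is exactly what you need, so your remark about $1/m$ is correctly used there.

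The gap is at the endpoint $\pi$, in the particular solution itself. You conclude that the outer integral converges because $1/m=I_{M^n}'/I_{M^n}^2$ is integrable and small near $\pi$. This ignores that the inner integral is unbounded. For $\mathbb{S}^n$, $\mathbb{P}^n(\mathbb{C})$ and $\mathbb{P}^n(\mathbb{H})$ one has $I_{M^n}'(s)\sim c(\pi-s)^{p}$ with $p=n-1,\,1,\,3$ respectively. So $m(s)\sim c'(\pi-s)^{-p}$ is not integrable at $\pi$, and whenever $g(\pi)\neq 0$, $\int_0^t m g\,ds$ blows up like $(\pi-t)^{1-p}$, or like $\ln\frac{1}{\pi-t}$ when $p=1$. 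Several things rest on a cancellation you never establish: the convergence of the outer integral, hence the definition of $\phi_n$, the value $\phi_n(\pi)=0$, and your step of integrating from $r$ to $\pi$. The cancellation is
\[ \frac{I_{M^n}'(t)}{I_{M^n}^2(t)}\int_0^t \frac{I_{M^n}^2(s)}{I_{M^n}'(s)}\,|g(s)|\,ds=O(\pi-t)\ \ (p>1),\qquad =O\big((\pi-t)\,|\ln(\pi-t)|\big)\ \ (p=1), \]
and it is precisely the estimate the paper's proof supplies. You need to add it.

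Two smaller inaccuracies:
\begin{itemize}
\item $I_{M^n}'$ does not vanish at $\pi$ for $\mathbb{P}^n(\mathbb{R})$, since $I'(\pi)=\sin^{n-1}(\pi/2)=1$. There $m$ is bounded and nothing needs checking, but $\phi_n'(\pi)=-I_{M^n}(\pi)^{-2}\int_0^\pi mg$ is generally nonzero, not $0$ as you claim.
\item The claim that $\phi_n$ is $C^2$ up to the boundary is false when $p=1$ (that is, $\mathbb{P}^n(\mathbb{C})$ and $\mathbb{S}^2$). There $b_n\sim\frac{1}{\pi-r}$, and $\phi_n''=-g-b_n\phi_n'$ gives $\phi_n''(r)\approx g(\pi)\ln\frac{1}{\pi-r}$. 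The statement only asks for a bounded solution, so you can simply drop that paragraph.
\end{itemize}
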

   
   \begin{proof} Let us justify integrability of $ [0,\pi] \ni  t \mapsto\frac{I_{M^n}'(t)}{I_{M^n}^2(t)}   \int_0^t  \frac{I_{M^n}^2(s)}{I_{M^n}'(s)} g(s)  $ at $ 0$ and $ \pi$.
   
   Since up to some constant $c$ that could change from one line to the other,  $ I_{M^n}'(s) \sim_{s \ri 0_+} c s^{n-1}$, where $ n$ be the real dimension of $ M^n$,   we have $I_{M^n}(s) \sim_{s \ri 0_+} c s^{n} , $ so $\frac{I_{M^n}^2(s)}{I_{M^n}'(s)} \sim_{s \ri 0_+ } cs^{n+1}$ hence $ t \mapsto \frac{I_{M^n}'(t)}{I_{M^n}^2(t)}  \int_0^t  \frac{I_{M^n}^2(s)}{I_{M^n}'(s)}  ds $ is integrable at $0$. \\
   At $ \pi $,   $ I_{M^n}(\pi)$  is finite, and $I_{M^n}'(s) \sim_{s \ri \pi- } c(\pi-s)^{\alpha}$ for some $\alpha \in \{0; 1; 3 ; n-1 \} $ depending on $M^n$.
 
 For $\alpha \in \{ 3 ; n-1 \} $  we have  $\int_{\epsilon}^t  \frac{1}{I_{M^n}'(s)}  ds \sim_{t \ri \pi- }  c (\pi-t)^{1-\alpha}$ and so $I_{M^n}'(t)\int_{\epsilon}^t  \frac{1}{I_{M^n}'(s)}  ds \sim_{t \ri \pi- }  c (\pi-t) $. 
 
 For $\alpha =1 $ we have $I_{M^n}'(t)\int_{\epsilon}^t  \frac{1}{I_{M^n}'(s)}  ds \sim_{t \ri \pi- }  c (\pi-t)\ln(\pi-t) $, hence in any case  $ t \mapsto \frac{I_{M^n}'(t)}{I_{M^n}^2(t)}  \int_0^t  \frac{I_{M^n}^2(s)}{I_{M^n}'(s)}  ds $ is integrable at $\pi$.
   
For the function defined in \eqref{phinrn}, we clearly have, $\phi_n (\pi) =0 $, and for any $r\in[0,\pi]$,
   $$ \phi_n' (r) = - \frac{I_{M^n}'(r)}{I_{M^n}^2(r)} \int_0^r  \frac{I_{M^n}^2(s)}{I_{M^n}'(s)} g(s) ds  $$
   $$ \phi_n ''(r) = - \lt(\frac{I_{M^n}'}{I_{M^n}^2}\rt)' (r) \int_0^r  \frac{I_{M^n}^2(s)}{I_{M^n}'(s)} g(s) ds  - g(r) .$$
 It follows that  \begin{align*} 
   L_n \phi_ n (r) &= -g(r) - \lt(\frac{I_{M^n}'}{I_{M^n}^2}\rt)'(r) \int_0^r  \frac{I_{M^n}^2(s)}{I_{M^n}'(s)} g(s) ds   + \lt( \ln \frac{I_{M^n}^2}{I_{M^n}'}\rt)'(r) \phi_n' (r) \\
   &= -g(r) - \lt(\frac{I_{M^n}'}{I_{M^n}^2}\rt)' (r)\int_0^r  \frac{I_{M^n}^2(s)}{I_{M^n}'(s)} g(s) ds -  \lt(  \ln \frac{I_{M^n}'}{I_{M^n}^2} \rt)'(r) \lt(- \frac{f^{n-1}(r)}{I_{M^n}^2(r)} \int_0^r  \frac{I_{M^n}^2(s)}{I_{M^n}'(s)} g(s) ds  \rt) \\
   &=-g(r).\\
   \end{align*}
 \end{proof}   


As in the proof of Proposition \ref{mean-eq} let $u_{n,0}\df \un $, the constant function taking the value 1 on $[0,\pi]$, and consider the following sequence  $(u_{n,k})_{k\in\NN} $, defined inductively by bounded solution of

 \bqn{def-u-harm}
\fo k\in\NN,\qquad \lt\{\begin{array}{rcl}
 L_n u_{n,k} &=& -k   u_{n,k-1}\\
    u_{n,k} (\pi) &=& 0 .\\
\end{array}\rt.\eqn

 We have for all $n \ge 2$ , and $k \in \mathbb{Z}_+$,  $$ \frac{u_{n,k}}{k !} =  G_n^{\circ k}[\un]\df G_n[G_n[\cdots [G_n [\un]]\cdots]]. $$

\begin{pro}\label{mean-var-harm}
For any $k\in\ZZ_+$, $ n\ge 2 $, and all  $M^n $,
 \bq  \EE[\tau_{M^n}^{k}] &=&  k! G_n^{\circ k}[\un] (0)  \eq
 where the explicit dependence of  $G_n$ in term of $M^n $ is given by   $$G_n[g](r) = \int_r^{\pi}  \frac{I_{M^n}'(t)}{I_{M^n}^2(t)}  \lt(\int_0^t  \frac{I_{M^n}^2(s)}{I_{M^n}'(s)} g(s) ds \rt) dt.$$
 
\end{pro}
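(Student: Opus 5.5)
We follow the scheme of Proposition \ref{mean-eq}, replacing the Bessel $3$ generator by $L_n$ and the Green operator of Proposition \ref{Poisson} by $G_n$ of Proposition \ref{Poisson-harm}. Set $u_{n,k}=k!\,G_n^{\circ k}[\un]$, so that $u_{n,k}$ is the bounded solution of \eqref{def-u-harm}. We prove by induction on $k$ that $u_{n,k}(r)=\EE[\tau_{M^n}(r)^k]$ for every $r\in[0,\pi]$, where $\tau_{M^n}(r)$ is the hitting time of $\pi$ by the solution $R$ of \eqref{R} started at $r$. Taking $r=0$ then gives the statement. The case $k=0$ is trivial.

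For the induction step, assume $u_{n,k}(x)=\EE_x[\tau^k]$, writing $\tau=\tau_{M^n}(x)$. Applying It\^o's formula to $u_{n,k+1}(R_t)$ for $t\le\tau$ gives
$$u_{n,k+1}(R_t)-u_{n,k+1}(x)=-(k+1)\int_0^t u_{n,k}(R_s)\,ds+\sqrt2\int_0^t u_{n,k+1}'(R_s)\,dB_s .$$
We evaluate this at $t=\tau$ and use $u_{n,k+1}(\pi)=0$. Provided the stochastic integral is a true martingale up to $\tau$ with zero mean, we get $u_{n,k+1}(x)=(k+1)\EE_x\big[\int_0^\tau u_{n,k}(R_s)ds\big]$. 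By the Markov property at time $s$ and the induction hypothesis, $u_{n,k}(R_s)=\EE[(\tau-s)^k\mid\mathcal F_s]$ on $\{s<\tau\}$. Hence $u_{n,k+1}(x)=(k+1)\EE_x\big[\int_0^\tau(\tau-s)^k ds\big]=\EE_x[\tau^{k+1}]$, with Fubini justified by positivity.

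The main obstacle is the technical justification at the endpoints, which the torus case hides. Two facts need checking.

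\emph{Near $r=0$.} The point $0$ is an entrance boundary where $b_n(r)\sim (n+1)/r$, so It\^o's formula must be justified there. By the explicit formula \eqref{gint}, $u_{n,k+1}'(r)=-\frac{I'}{I^2}(r)\int_0^r\frac{I^2}{I'}\,(k+1)u_{n,k}$. The asymptotics $I'\sim cs^{n-1}$ and $I\sim cs^n$ make this $O(r)$, so $u_{n,k+1}$ is $C^1$ with bounded derivative on $[0,\pi)$. Since $R$ never returns to $0$ after time $0$, we apply It\^o on $[\epsilon,\tau]$ and let $\epsilon\to0$.

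\emph{Near $r=\pi$.} The integrability analysis in the proof of Proposition \ref{Poisson-harm} shows that $u_{n,k+1}'$ stays bounded (or at worst has a logarithmic singularity when $\alpha=1$) as $r\to\pi$. To handle this, we localize with $\tau_\delta=\inf\{t:R_t=\pi-\delta\}$. On $[0,\tau_\delta]$ the integrand is bounded, so the stochastic integral has zero mean, and we let $\delta\to0$. The conclusion then follows from continuity and boundedness of the $u_{n,j}$ on $[0,\pi]$ together with monotone convergence.

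Finiteness of all moments comes out of the same argument. We first run the induction with $\tau\wedge\tau_\delta$ and obtain the bound $\EE[(\tau\wedge\tau_\delta)^{k+1}]\le u_{n,k+1}(x)$, which is uniform in $\delta$. Letting $\delta\to0$ by monotone convergence shows the moments are finite, and also that $\tau<\infty$ a.s. This last fact is the hypothesis needed in Proposition \ref{sep}.
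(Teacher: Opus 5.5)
Your proposal is correct and takes the same approach as the paper, whose proof simply defers to the It\^o/martingale/Markov-property induction of Proposition \ref{mean-eq}; your localization at $\tau_\delta$, your handling of the entrance boundary at $0$, and your a priori bound giving $\tau<\infty$ supply justification that the paper leaves implicit. One small correction, which does not affect the argument: $u_{n,k+1}'$ is in fact bounded up to $\pi$ in every case, including $\mathbb{P}^n(\mathbb{C})$ where $I'(s)\sim c(\pi-s)$ and $I'(t)\int^t 1/I'\sim c(\pi-t)\ln(\pi-t)\to 0$, and it is $u_{n,k+1}''=-(k+1)u_{n,k}-b_n u_{n,k+1}'$ that may blow up logarithmically there, which your localization already handles.
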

\proof The proof is similar to the proof of Proposition \ref{mean-eq}.

\wwtbp

\begin{rem}
Using Proposition  \ref{Poisson-harm}, and \ref{mean-var-harm} we have 
$$ \EE[\tau_{M^n}] = G_n[1](0)=  \Vert G_n[1] \Vert_{\infty} < \infty, $$
this is enough to apply Proposition \ref{sep}.

Moreover we have 
$$ \EE[\tau_{M^n}^k] \le k!  \Vert G_n[1] \Vert_{\infty}^k = k! \EE[\tau_{M^n}] ^k.$$
\end{rem}
Let $\mu_n (ds) :=   \frac{I_{M^n}^2(s)}{I_{M^n}'(s)}  ds $ be the invariant measure of $ L_n$. The operator $ G_n$ could be write, using Fubini,  in term of symmetric kernel $$ k_n(r,s) : = \int_{r \vee s}^{\pi } \frac{I_{M^n}'(t)}{I_{M^n}^2(t)}  \,dt,$$
where $ r \vee s = \max \{r,s \}$ and \bqn{gker} G_n[g](r) = \int_0^{\pi}  k_n(r,s) g(s) \mu_n(ds). \eqn
Contrary to the torus case, $ \un  \notin L^2([0, \pi],\mu_n), $  (for $ M^n  \neq \mathbb{P}^n(\mathbb{R}) $)  nevertheless we have the following Proposition.
\begin{pro} \label{gcirc}
For all $ k \ge \lceil \frac{n}{2} \rceil+ 2$, we have $$ s \mapsto G_n^{\circ (k  )}[\un] (s) \frac{I_{M^n}^2(s)}{I_{M^n}'(s)}  $$ is bounded in $[0, \pi] $,
 and $$G_n^{\circ k}[\un] \in  L^2([0, \pi],\mu_n).$$
\end{pro}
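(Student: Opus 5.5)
The plan is to control the behaviour of $G_n^{\circ k}[\un]$ near the two endpoints $0$ and $\pi$ separately, using the integral formula \eqref{gint}. Write $w_n(s):=\frac{I_{M^n}^2(s)}{I_{M^n}'(s)}$, so that $\mu_n(ds)=w_n(s)\,ds$.

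\textbf{Near $0$ and in the interior.} The remark after Proposition \ref{mean-var-harm} gives $\Vert G_n[g]\Vert_\infty\le \Vert g\Vert_\infty\Vert G_n[\un]\Vert_\infty$, so every iterate $G_n^{\circ k}[\un]$ is bounded on $[0,\pi]$. As in the proof of Proposition \ref{Poisson-harm}, $w_n(s)\sim c\,s^{n+1}$ at $0_+$, and $w_n$ is continuous on $(0,\pi)$. Hence $w_n\,G_n^{\circ k}[\un]$ is bounded on every interval $[0,\pi-\delta]$. The only difficulty is therefore at $\pi$. There $I_{M^n}(\pi)>0$ and $I_{M^n}'(s)\sim c(\pi-s)^{\alpha}$ with $\alpha\in\{0,1,3,n-1\}$, so $w_n(s)\sim c(\pi-s)^{-\alpha}$. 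In every case $\alpha\le n-1$.

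\textbf{Induction on the exponent at $\pi$.} The key step is an inductive decay estimate
$$|G_n^{\circ k}[\un](r)|\le C_k(\pi-r)^{\gamma_k},\qquad r\in[0,\pi],$$
with the exponents chosen as follows. Set $\gamma_0=0$. Put $\gamma_{k+1}=\gamma_k+2$ if $\gamma_k<\alpha-1$, and $\gamma_{k+1}=\alpha+\frac12$ otherwise.

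To prove the inductive step, suppose $|g(s)|\le C(\pi-s)^{\gamma}$. Then $|w_n g|(s)\le C'(\pi-s)^{\gamma-\alpha}$ near $\pi$, and $w_n g$ is integrable near $0$. There are two cases.
\begin{itemize}
\item If $\gamma-\alpha<-1$, the inner integral $\int_0^t w_n g\,ds$ is $O\big((\pi-t)^{\gamma-\alpha+1}\big)$. Multiplying by $I_{M^n}'(t)/I_{M^n}^2(t)=O\big((\pi-t)^{\alpha}\big)$ and integrating from $r$ to $\pi$ gives $O\big((\pi-r)^{\gamma+2}\big)$.
\item If $\gamma-\alpha\ge-1$, the inner integral is $O\big(1+|\ln(\pi-t)|\big)$. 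Integrating then gives $O\big((\pi-r)^{\alpha+1}|\ln(\pi-r)|\big)=O\big((\pi-r)^{\alpha+1/2}\big)$.
\end{itemize}
The only delicate point is the borderline logarithm in the second case. It is absorbed by sacrificing $\frac12$ in the exponent, which is exactly why the construction stops at $\alpha+\frac12$ rather than $\alpha+1$.

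\textbf{Counting steps and concluding.} Starting from $0$ and adding $2$ each time, $\gamma_k\ge\alpha$ once $k\ge\lceil \alpha/2\rceil+1$, and $\gamma_k$ never decreases afterwards. Since $\alpha\le n-1$, this holds for all $k\ge\lceil n/2\rceil+2$, with room to spare. The case $\alpha=0$ (the real projective space) is immediate. For such $k$ we get $|w_n(s)G_n^{\circ k}[\un](s)|\le C(\pi-s)^{\gamma_k-\alpha}\le C$ near $\pi$, which together with the first paragraph gives boundedness on $[0,\pi]$. Finally,
$$\int_0^\pi \big(G_n^{\circ k}[\un]\big)^2 d\mu_n=\int_0^\pi \big(w_nG_n^{\circ k}[\un]\big)\,G_n^{\circ k}[\un]\,ds\le \pi\,\Vert w_nG_n^{\circ k}[\un]\Vert_\infty\Vert G_n^{\circ k}[\un]\Vert_\infty<\infty,$$
so $G_n^{\circ k}[\un]\in L^2([0,\pi],\mu_n)$. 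I expect the main obstacle to be the careful bookkeeping of the endpoint asymptotics, including the logarithmic borderline case, uniformly across the four families of $M^n$.
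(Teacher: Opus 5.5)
Your proof is correct and follows essentially the same route as the paper. Both track the order of vanishing of $G_n^{\circ k}[\un]$ at $\pi$ through the explicit Green formula \eqref{gint}, gaining two powers per application of $G_n$ until the logarithmic borderline, so that the blow-up $I_{M^n}^2/I_{M^n}'\sim c(\pi-s)^{-\alpha}$ is compensated. The differences are only technical refinements: you use one-sided $O$-bounds instead of asymptotic equivalents (so no nonvanishing of limiting constants is needed), absorb the logarithm by giving up $\frac12$ in the exponent rather than applying $G_n$ once more, handle all values of $\alpha$ uniformly instead of only the sphere, and get the $L^2$ statement directly from $\int_0^\pi (w_nG)\,G\,ds\le\pi\Vert w_nG\Vert_\infty\Vert G\Vert_\infty$.
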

\begin{proof}
\begin{enumerate}[(i)]

\item
For $g \in  C_b([0, \pi])$, we want to get sufficient condition to have $ g (s) \frac{I_{M^n}^2(s)}{I_{M^n}'(s)}  $  bounded in $[0, \pi] $  and $ g \in L^2([0, \pi],\mu_n) $, i.e.  $ \int_0^{\pi}  g^2(s) \mu_n(ds) = \int_0^{\pi}  g^2(s) \frac{I_{M^n}^2(s)}{I_{M^n}'(s)} ,ds < \infty$. Since at  $0  $ their are no problem of integrability and boundedness as in the proof of Proposition \ref{mean-var-harm} we have to look  the integrability and boundedness at $ \pi$.

At $ \pi $, since    $ I_{M^n}(\pi)$  is finite, and $I_{M^n}'(s) \sim_{s \ri \pi- } c(\pi-s)^{\alpha}$ for some $\alpha \in \{0; 1; 3 ; n-1 \} $, to get $ g \in L^2([0, \pi],\mu_n) $ it is enough to have $ g  \sim_{s \ri \pi- } c(\pi-s)^{\beta}$ with $\frac{\alpha -1}{2} < \beta$, in all case it is enough to have $ \lceil \frac{n}{2} \rceil \le {\beta}$, and in order to get boundedness of $ g (s) \frac{I_{M^n}^2(s)}{I_{M^n}'(s)}  $ it is enough to have $ g  \sim_{s \ri \pi- } c(\pi-s)^{\beta}$ with $ n-1 \le  \beta$.
\item
 In what follows we will look at the worst case i.e. the sphere case $ \alpha = n-1$ the others will done in the same way.

In this case $I_{M^n}'(s) \sim_{s \ri \pi- } (\pi-s)^{n-1}$, if  $g \sim_{s \ri \pi- } c(\pi-s)^{\gamma}$  with $ \gamma < n-2$. Since 
 $$ G_n[g](r) = \int_r^{\pi}  \frac{I_{M^n}'(t)}{I_{M^n}^2(t)}  \lt(\int_0^t  \frac{I_{M^n}^2(s)}{I_{M^n}'(s)} g(s) ds \rt) dt,$$
 and $\int_0^t  \frac{I_{M^n}^2(s)}{I_{M^n}'(s)} g(s) ds \sim_{t \ri \pi- } c \int_0^t  \frac{1}{ (\pi-s)^{n-1 -\gamma}}  ds  \sim_{t \ri \pi- } \frac{c}{ (\pi-t)^{n-2 -\gamma}},$
 hence  $$G_n[g](r) \sim_{r \ri \pi- } c\int_r^{\pi}  (\pi-t)^{n-1}   \frac{1}{(\pi-s)^{n-2 -\gamma}}  dt \sim_{t \ri \pi- } c(\pi-r)^{\gamma+2}.$$
 If $ \gamma = n-2$ then $$G_n[g](r) \sim_{r \ri \pi- } -c (\pi-r)^{n} \ln (\pi-r) ,$$ and 
 $G_n^{\circ 2}[g](r)  \sim_{r \ri \pi- } c (\pi-r)^{n}  $.
 
 If $ \gamma > n-2$ then $$G_n[g](r) \sim_{r \ri \pi- } c (\pi-r)^{n}.$$
 
 Hence, using $ (ii)$ recursively since $ \un (r) \sim_{r \ri \pi- } (\pi-r)^{0}$, we get that if $2\beta \ge n-2 $ then $ G_n^{\circ (\beta +2 )}[\un](r)  \sim_{r \ri \pi- } c (\pi-r)^{n}  $. Hence if $ k \ge \lceil \frac{n}{2} \rceil + 2 $ then $  G_n^{\circ (k )}[\un](r)  \sim_{r \ri \pi- } c (\pi-r)^{n}  $ and so using  $i)$ we have  $ G_n^{\circ (k  )}[\un] \in L^2([0, \pi],\mu_n)  $ and $ G_n^{\circ (k  )}[\un] (s) \frac{I_{M^n}^2(s)}{I_{M^n}'(s)}  $ is bounded in $[0, \pi] $.

\end{enumerate}
 \end{proof}
 
 \begin{pro} \label{Gdiag}
 The operator $G_n $ is also an operator acting on $ L^2([0, \pi],\mu_n)$:
 $$ G_n :  L^2([0, \pi],\mu_n) \to  L^2([0, \pi],\mu_n) .$$
 Moreover $G_n$ is self-adjoint  and compact, and so there exists a countably infinite orthonormal basis of $L^2([0, \pi],\mu_n) $ consisting of eigenvectors of $G_n$, with corresponding eigenvalues $\nu_k \in \mathbb{R}^* $.
 \end{pro}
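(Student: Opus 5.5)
The natural route is to show that the symmetric kernel $k_n$ in \eqref{gker} is square integrable for $\mu_n\otimes\mu_n$. Then $G_n$ is a Hilbert--Schmidt operator on $L^2([0,\pi],\mu_n)$, hence bounded and compact. Symmetry of $k_n(r,s)=k_n(s,r)$ together with Fubini gives $\langle G_n f,g\rangle_{\mu_n}=\langle f,G_n g\rangle_{\mu_n}$, so $G_n$ is self-adjoint. The spectral theorem for compact self-adjoint operators then yields an orthonormal basis of eigenvectors of $\overline{\mathrm{Ran}\,G_n}\oplus\ker G_n$. It remains to check that $\ker G_n=\{0\}$, which makes all eigenvalues nonzero and the basis countably infinite.

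\textbf{Step 1: Hilbert--Schmidt bound.} Write $K(t)=\int_t^\pi \frac{I_{M^n}'(u)}{I_{M^n}^2(u)}du$, which is decreasing, so $k_n(r,s)=K(r\vee s)$. Then
\begin{align*}
\int\!\!\int k_n^2\,d\mu_n\,d\mu_n = 2\int_0^\pi K(t)^2\,\mu_n([0,t])\,\mu_n(dt).
\end{align*}
Near $0$ we have $I_{M^n}(t)\sim ct^n$, so $K(t)\sim ct^{-n}$ and $\mu_n([0,t])\sim ct^{n+2}$. The integrand is then $O(t^{-2n}t^{n+2}t^{n+1})=O(t^3)$, which is integrable. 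Near $\pi$ we have $I_{M^n}'(u)\sim c(\pi-u)^{\alpha}$ with $\alpha\in\{0,1,3,n-1\}$ and $I_{M^n}(\pi)$ finite, so $K(t)\sim c(\pi-t)^{\alpha+1}$ and $\mu_n(dt)\sim c(\pi-t)^{-\alpha}dt$. The quantity $\mu_n([0,t])$ is at most $c(\pi-t)^{1-\alpha}$, or logarithmic when $\alpha=1$, or bounded when $\alpha=0$. The integrand is therefore at most $O((\pi-t)^{2\alpha+2+1-2\alpha})$ up to a logarithm, which is integrable. The same asymptotics as in Proposition \ref{Poisson-harm} and Proposition \ref{gcirc} are being used here. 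This finite double integral is the main step. It requires going through the exponent bookkeeping case by case, especially the worst case $\alpha=n-1$ (spheres), and the logarithmic case $\alpha=1$.

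\textbf{Step 2: consistency with the earlier definition.} For $g\in C_b$, the formula \eqref{gker} agrees with \eqref{gint}. So the $L^2$ operator extends the earlier Green operator, and Proposition \ref{gcirc} ensures that the iterates of $\un$ eventually live in the space.

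\textbf{Step 3: injectivity.} Let $f\in L^2(\mu_n)$ with $G_nf=0$. The function $h(t)=\int_0^t f\,d\mu_n$ is well defined because $f\in L^1_{loc}$ near every $t<\pi$ (Cauchy--Schwarz on $[0,t]$, where $\mu_n$ is finite). Then $G_nf(r)=\int_r^\pi \frac{I'}{I^2}(t)h(t)\,dt=0$ for all $r$. Differentiating almost everywhere gives $h\equiv0$ on $(0,\pi)$, hence $f=0$ $\mu_n$-a.e. So $\ker G_n=\{0\}$, and since $L^2$ is infinite dimensional there are countably infinitely many nonzero eigenvalues $\nu_k$. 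These are real, and in fact positive, since $\langle G_nf,f\rangle_{\mu_n}=\int_0^\pi \frac{I'}{I^2}h^2\,dt\ge0$ by integration by parts.
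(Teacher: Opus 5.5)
Your proof is correct and has the same skeleton as the paper's: a Hilbert--Schmidt kernel $k_n(r,s)=K(r\vee s)$, self-adjointness from symmetry, the spectral theorem, and a trivial kernel. The difference is in how you verify the key estimate.

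\textbf{Hilbert--Schmidt bound.} The paper does no endpoint asymptotics here. Since $K\ge 0$ is decreasing, $K(r\vee s)=\min(K(r),K(s))$, so $k_n(r,s)^2\le K(r)K(s)$. Fubini then gives
\[
\int_0^\pi\!\!\int_0^\pi k_n^2\,d\mu_n\,d\mu_n\le\Big(\int_0^\pi K\,d\mu_n\Big)^2=\big(G_n[\un](0)\big)^2=\EE[\tau_{M^n}]^2,
\]
and finiteness of the right-hand side was already established in the proof of Proposition~\ref{Poisson-harm}. Your exact identity $\|k_n\|^2=2\int_0^\pi K^2\,\mu_n([0,t])\,\mu_n(dt)$ also works. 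However, it forces you to redo the case-by-case bookkeeping at $0$ and $\pi$. That bookkeeping is easier if you note $K=1/I_{M^n}-1/I_{M^n}(\pi)$ exactly. The paper's trick is uniform in $M^n$ and reuses a quantity it needs anyway.

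\textbf{Injectivity.} Here your route is stronger than the paper's. Your Step 3 genuinely proves $\ker G_n=\{0\}$, which the paper only asserts ``by \eqref{gint}''. Your identity $\langle G_nf,f\rangle_{\mu_n}=\int_0^\pi\frac{I_{M^n}'}{I_{M^n}^2}h^2\,dt$ should be justified by Fubini rather than ``integration by parts''. The required absolute integrability is bounded by $\|f\|^2G_n[\un](0)$ via Cauchy--Schwarz. This identity gives positivity of the $\nu_k$, consistent with $\nu_k=-1/\lambda_k$ found later.

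\textbf{Minor inaccuracies.}
\begin{itemize}
\item For $\alpha=0$ the integrand near $\pi$ is $O((\pi-t)^2)$, not $O((\pi-t)^3)$; it is still integrable.
\item $\mu_n$ is infinite when $\alpha\ge 1$, so $C_b\not\subset L^2(\mu_n)$. The two definitions of $G_n$ therefore agree on $C_b\cap L^2(\mu_n)$, rather than one extending the other.
\item In Step 3, $G_nf=0$ should be read on $(0,\pi]$: first a.e., then everywhere by continuity. The reason is that $G_nf(0)$ can be infinite for a general $f\in L^2(\mu_n)$.
\end{itemize}
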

 \begin{proof}
 We will show that $G_n$ is an Hilbert–Schmidt operator. Recall that 
   \bq  G_n[g](r) = \int_0^{\pi}  k_n(r,s) g(s) \mu_n(ds), \eq
   where $$ k_n(r,s) : = \int_{r \vee s}^{\pi } \frac{I_{M^n}'(t)}{I_{M^n}^2(t)}  \,dt.$$
   
Hence $$ (k_n(r,s))^2 \le  \big( \int_{r}^{\pi } \frac{I_{M^n}'(t)}{I_{M^n}^2(t)}  \,dt \big) \big( \int_{s}^{\pi } \frac{I_{M^n}'(t)}{I_{M^n}^2(t)}  \,dt \big).$$ 
It follows that
\begin{align*}
\int_{0}^{\pi } \int_{0}^{\pi } (k_n(r,s))^2 \mu_n(ds)\mu_n(dr) &\le  \int_{0}^{\pi } \int_{0}^{\pi }    \big( \int_{r}^{\pi } \frac{I_{M^n}'(t)}{I_{M^n}^2(t)}  \,dt \big) \big( \int_{s}^{\pi } \frac{I_{M^n}'(t)}{I_{M^n}^2(t)}  \,dt \big)          \mu_n(ds)\mu_n(dr)  \\
&=  \Big( \int_{0}^{\pi }     \int_{r}^{\pi } \frac{I_{M^n}'(t)}{I_{M^n}^2(t)}  \,dt  \mu_n(dr)            \Big)^2 \\
&= \Big( \int_{0}^{\pi }   \frac{I_{M^n}^2(r)}{I_{M^n}'(r)}  \int_{r}^{\pi }    \frac{I_{M^n}'(t)}{I_{M^n}^2(t)}  \,dt  \,dr            \Big)^2 \\
&= \Big( \int_{0}^{\pi }    \frac{I_{M^n}'(t)}{I_{M^n}^2(t)}  \int_{0}^{t}    \frac{I_{M^n}^2(r)}{I_{M^n}'(r)} \,dr  \,dt            \Big)^2 \\
&= ( G_n(\un) (0)  )^2 , \\
\end{align*}
where we use Fubini's Theorem. It follows that $ G_n$ is an  Hilbert–Schmidt operator on $L^2([0, \pi],\mu_n)$ and so $ G_n$ is a compact operator. Since $k_n $ is symmetric $G_n$ is also self dual.
Hence by Spectral Theorem,  there exists a countably infinite orthonormal basis of $L^2([0, \pi],\mu_n) $ formed by  eigenvectors of $G_n$, with corresponding eigenvalues $\nu_k \in \mathbb{R} $ is such that $ \sum_{k=0}^{\infty} \nu_k^2 < \infty$ and so $ \nu_k \to_{k \to \infty} 0$. Note also by  \eqref{gint}, $0$ could not be an eigenvalue of $G_n$ and so $\nu_k \in \mathbb{R}^* $.

 \end{proof}

 Recall that $\Delta_n  $ be the radial part  of the Laplace-Beltrami operator $\Delta $ in $ M^n$. It acts on function over  $[0, \pi] $  and  
 $$  \Delta_n = \partial^2_r + \ln(I_{M^n}'(r))' \partial_r .$$
 If $\wi{0}  $ is an arbitrary point in $ M^n$, and $d$ the Riemannian distance in $M^n$ a $ \mathcal{C}^2$ function $f : M^n \to \mathbb{R}$ is called radial at $\wi{0} $ if  their exist $F \in \mathcal{C}^2([0, \pi]) $ such that $ f(x) = F(d(\wi{0}, x) ) ,$ and for such function we have 
 \bqn{com_rad}
  \Delta f (x) = \Delta_n(F) (d(\wi{0}, x) ) 
    \eqn
 Let  $ \wi{\mu}_n (dr) =  I_{M^n}'(r) dr $ be the invariant measure of $ \Delta_n$ in $[0, \pi] $, and consider as in \cite{zbMATH07470497} the link 
 \bq
 \Lambda_n :  \mathcal{C}^2([0, \pi]) &\to &  \mathcal{C}^2([0, \pi])    \\
                   F                  &\mapsto &  r \mapsto \frac{\int_0^r F(s)   \wi{\mu}_n (ds)}{\wi{\mu}_n [0,r]} = \frac{\int_0^r F(s)   \wi{\mu}_n (ds)}{I_{M^n}(r)}.\\             
 \eq
As a remarque it could be noted that we have the following commutating diagram  :

\[ 
 \xymatrix{
 \mathcal{C}^2(M^n)\vert_{rad} \ar[r]^{\Delta} \ar[d]_{\Lambda_0} &   \mathcal{C}^2(M^n)]) \vert_{rad} \ar[d]^{\Lambda_0 }\\
  \mathcal{C}^2([0, \pi]) \ar[r]^{\Delta_n } \ar[d]_{\Lambda_n } &   \mathcal{C}^2([0, \pi]) \ar[d]^{\Lambda_n }\\
  \mathcal{C}^2([0, \pi])  \ar[r]_{L_n} & \mathcal{C}^2([0, \pi]),   
}
\]

where $\mathcal{C}^2(M^n)\vert_{rad} $ is the space of function over $ M^n$ that are radial, $ \Lambda_0 (f) (r) = \frac{\int_{S(\wi{0},r)} f(x) d\sigma}{\sigma ( S(\wi{0},r)) }$ is an orbital integral, note that if $f$ is radial such  that $ f(x) = F(d(\wi{0}, x) ) ,$ then  $\Lambda_0 (f) (r) =F(r)$ and the upper part commutative diagram follows from \eqref{com_rad}. By direct computation or as application of \cite{zbMATH07470497} we have the  commutation of the lower part of the diagram i.e. $ L_n  \Lambda_n = \Lambda_n \Delta_n  $. \\
After a change of variables ($t= \cos(\gamma r)$) the spectral decomposition of $\Delta_n $ in $L^2([0,\pi], \wi{\mu}_n) $ is express in term of Jacobi polynomial $P^{\alpha, \beta}_k $, we refer as example  to section 2 and 3 of \cite{Jaming} . Let us receptively called  $( \{ (\varphi_k, \lambda_k)_k, \quad k\in \mathbb{Z}_+\} $  the eigenfunction and eigenvalue of $\Delta_n$, and recall that $ \{ \varphi_k, k \in \mathbb{Z}_+  \}$ form a complete orthogonal system of $L^2([0,\pi], \wi{\mu}_n) $, where
$$ \varphi_k (r) = P^{\alpha,\beta}_k ( \cos (\gamma r)  ) ,$$
for $ k \in \mathbb{Z}_+ $ (and for projective space $ \varphi_k (r) = P^{\alpha,\alpha}_{2k} ( \cos (\gamma r)  ) $), and $\lambda_k $ is given by \eqref{eigenvalue}.
 
The following proposition characterize the spectral decomposition of $G_n$ in terms of the spectral decomposition of $ \Delta_n$ except we have to remove the contribution of $\varphi_0=1 $ the eigenfunction associated to $ \lambda_0 = 0$. 

\begin{pro}\label{iso-spec}
The family  $ \{ \Lambda_n \varphi_k , \quad    k \in \mathbb{Z}^+\}$  is an orthogonal complete  system of functions in $L^2([0;\pi], \mu_n(dr)) $ where $ \mu_n(dr)  $ is the invariant measure of $ L_n$. 
 Also  $ \{ \wi {\Lambda_n \varphi_k} := \frac{\Lambda_n \varphi_k}{ \Vert \Lambda_n \varphi_k \Vert_{L^2([0;\pi], {\mu}_n)} }, k \in \mathbb{Z}^+  \} $ are an Hilbert basis of  eigenfunction of $ G_n $ associated respectively to the eigenvalue $ \frac{-1}{\lambda_k}$, where $\Vert  \Lambda_n\varphi_k  \Vert^2_{L^2([0;\pi], \mu_n)} = -\frac{1}{\lambda_k} \Vert \varphi_k \Vert^2_{L^2([0;\pi], \wi{\mu}_n)} $ for $ k \in \mathbb{Z}^+$ and $\lambda_k $ is given by \eqref{eigenvalue}. We have the following spectral decomposition, for all  $  g \in L^2([0, \pi],\mu_n) $ and for all $ m \in \mathbb{Z}_+ $
 \bqn{spectdes}
 G^{\circ m }_n(g) = \sum_{k=1}^{\infty} ( - \frac{1}{ \lambda_k}  )^m \frac{1}{\Vert \Lambda_n \varphi_k \Vert^2_{L^2([0;\pi], {\mu}_n)}}\Lambda_n \varphi_k \langle \Lambda_n \varphi_k, g \rangle_{L^2([0;\pi], {\mu}_n)}
 \eqn

\end{pro}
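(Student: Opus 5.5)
The argument follows the torus case (the Proposition on $\{n\pi\sqrt2\,\Lambda\varphi_n\}$), using the intertwining $L_n\Lambda_n=\Lambda_n\Delta_n$ from the commutative diagram. There are four steps.

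\textbf{Step 1: eigenfunctions of $G_n$.} For $k\ge1$ the intertwining gives $L_n\Lambda_n\varphi_k=\Lambda_n\Delta_n\varphi_k=\lambda_k\Lambda_n\varphi_k$. Next I would check the boundary condition $\Lambda_n\varphi_k(\pi)=0$. This holds because $\Lambda_n\varphi_k(\pi)=\frac{1}{I_{M^n}(\pi)}\int_0^\pi\varphi_k\,d\wi\mu_n=\langle\varphi_k,\varphi_0\rangle_{L^2(\wi\mu_n)}/I_{M^n}(\pi)=0$ by orthogonality to $\varphi_0=\un$. The function $\Lambda_n\varphi_k$ is also bounded, with $\Lambda_n\varphi_k(0)=\varphi_k(0)$. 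Since the bounded solution of the Poisson problem in Proposition \ref{Poisson-harm} is unique (the difference of two solutions is $L_n$-harmonic, bounded, and vanishes at $\pi$, with $0$ an entrance boundary), it follows that $G_n[\Lambda_n\varphi_k]=-\frac{1}{\lambda_k}\Lambda_n\varphi_k$.

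\textbf{Step 2: norms and orthogonality.} I would write $\Lambda_n\varphi_k=\Phi_k/I_{M^n}$ with $\Phi_k(r)=\int_0^r\varphi_k\,d\wi\mu_n$. Then $\Phi_k'=\varphi_kI_{M^n}'$, and
$$\langle\Lambda_n\varphi_k,\Lambda_n\varphi_j\rangle_{L^2(\mu_n)}=\int_0^\pi\frac{\Phi_k\Phi_j}{I_{M^n}'}\,dr.$$
Writing $\Delta_n$ in divergence form, $\Delta_n\varphi_k=\frac{1}{I_{M^n}'}(I_{M^n}'\varphi_k')'$, and integrating from $0$ gives $\Phi_k=\frac{1}{\lambda_k}I_{M^n}'\varphi_k'$. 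The integral therefore becomes $\frac{1}{\lambda_k\lambda_j}\int_0^\pi\varphi_k'\varphi_j'\,d\wi\mu_n$. Integrating by parts, with boundary terms vanishing because $I_{M^n}'$ vanishes at $0$ and $\varphi_j'I_{M^n}'=\lambda_j\Phi_j$ vanishes at $\pi$ by Step 1, this equals $\frac{-\lambda_j}{\lambda_k\lambda_j}\langle\varphi_k,\varphi_j\rangle_{L^2(\wi\mu_n)}$. This yields orthogonality and the stated norm identity $\Vert\Lambda_n\varphi_k\Vert^2=-\frac{1}{\lambda_k}\Vert\varphi_k\Vert^2$. The projective case with indices $2k$ is handled identically.

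\textbf{Step 3: completeness (main obstacle).} Let $f\in L^2(\mu_n)$ be orthogonal to every $\Lambda_n\varphi_k$, $k\ge1$. By Fubini,
$$\langle f,\Lambda_n\varphi_k\rangle_{L^2(\mu_n)}=\int_0^\pi\varphi_k(s)\,h(s)\,\wi\mu_n(ds),\qquad h(s):=\int_s^\pi\frac{f(r)I_{M^n}(r)}{I_{M^n}'(r)}\,dr.$$
I must verify that $h\in L^2(\wi\mu_n)$, using Cauchy–Schwarz together with the asymptotics of $I_{M^n}'$ near $0$ and $\pi$ from the proof of Proposition \ref{gcirc}; this integrability check is the delicate point. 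Granting it, $h$ is orthogonal to all $\varphi_k$ with $k\ge1$, so completeness of $\{\varphi_k\}$ in $L^2(\wi\mu_n)$ forces $h$ to be constant. Since $h(\pi)=0$ and $h$ is continuous, $h\equiv0$, and differentiating gives $f=0$ a.e.

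An alternative route avoids the integrability check. By Proposition \ref{Gdiag}, $G_n$ is compact, self-adjoint and injective. Any eigenfunction $\psi$ of $G_n$ with eigenvalue $\nu$ satisfies $L_n\psi=-\psi/\nu$ with $\psi(\pi)=0$. The map $\psi\mapsto I_{M^n}'\psi'/I_{M^n}$, the inverse direction of the intertwining, then produces a radial eigenfunction of $\Delta_n$. Hence $-1/\nu\in\{\lambda_k\}$, and the eigenspaces are one-dimensional. I would present this argument as a backup.

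\textbf{Step 4: spectral formula.} With the Hilbert basis $\{\wi{\Lambda_n\varphi_k}\}$ established, \eqref{spectdes} follows from the spectral theorem applied to $G_n^{\circ m}$, since each basis vector is an eigenvector with eigenvalue $(-1/\lambda_k)^m$.
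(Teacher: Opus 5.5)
Your Steps 1, 2 and 4 coincide with the paper's proof. In Step 2, your boundary-term argument uses $I_{M^n}'\varphi_j'=\lambda_j\Phi_j$ and $\Phi_j(\pi)=0$, which handles $\mathbb{P}^n(\mathbb{R})$ together with the other spaces, whereas the paper splits into cases.

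The gap is Step 3. Your primary route does not just need a delicate check; it fails as stated. Whenever $I_{M^n}'(\pi)=0$, i.e.\ for every space except $\mathbb{P}^n(\mathbb{R})$, the measure $\mu_n=\frac{I_{M^n}^2}{I_{M^n}'}dr$ has infinite mass near $\pi$. Then $f\in L^2(\mu_n)$ does not make $fI_{M^n}/I_{M^n}'$ integrable there. For $\mathbb{S}^n$ with $n\ge 3$, take $f(r)=(\pi-r)^b$ for $r\ge\pi/2$ and $f=0$ otherwise, with $\frac{n-2}{2}<b\le n-2$. Then $f\in L^2(\mu_n)$, but $h(s)=+\infty$ for every $s<\pi$. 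For $\mathbb{P}^n(\mathbb{C})$, taking $f=1/\lvert\ln(\pi-r)\rvert$ near $\pi$ does the same. For the same reason your Fubini interchange is unjustified: the only absolute bound is $\Vert\varphi_k\Vert_\infty\int\lvert f\rvert\,d\mu_n$, which may be infinite. The torus argument worked only because $x^2dx$ is a finite measure. The $h$-argument can be rescued by applying it to $g=G_n^{m}f$ with $m$ large. This $g$ is still orthogonal to every $\Lambda_n\varphi_k$ by self-adjointness. It gains enough decay at $\pi$: a Cauchy--Schwarz bound handles the first application of $G_n$, then the recursion of Proposition \ref{gcirc}. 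Finally, $g=0$ forces $f=0$ because $G_n$ is injective. You did not do this.

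Your backup follows the paper's route, but the map you name is the wrong one. By \eqref{Lambda-phi}, $\psi\mapsto I_{M^n}'\psi'/I_{M^n}$ is the \emph{forward} map, sending $\varphi_k\mapsto\lambda_k\Lambda_n\varphi_k$. Applied to an eigenfunction of $L_n$ it does not produce one of $\Delta_n$. In the torus model ($I_{M^n}(r)=r$, $I_{M^n}'=1$) it sends $\sin(j\pi r)/r$ to $(j\pi r\cos(j\pi r)-\sin(j\pi r))/r^3$, not to a multiple of $\cos(j\pi r)$.

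What you need is the inverse of $\Lambda_n$, namely $\widetilde\psi=(I_{M^n}\psi)'/I_{M^n}'=\psi+\frac{I_{M^n}}{I_{M^n}'}\psi'$. It satisfies $\Lambda_n\widetilde\psi=\psi$ because $I_{M^n}(0)=0$. Using $L_n\psi=-\psi/\nu$, one gets $\widetilde\psi'=-\frac{1}{\nu}\frac{I_{M^n}}{I_{M^n}'}\psi$ and then $\Delta_n\widetilde\psi=-\frac{1}{\nu}\widetilde\psi$. Hence $-1/\nu=\lambda_k$ for some $k\ge 1$. The map $\psi\mapsto\widetilde\psi$ is injective, and the radial $\lambda_k$-eigenspace of $\Delta_n$ is one-dimensional among solutions bounded at $0$. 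So the $G_n$-eigenspace is spanned by $\Lambda_n\varphi_k$, and Proposition \ref{Gdiag} gives completeness. With this correction your backup is exactly the paper's proof, and it should be your main argument rather than a fallback.
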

\begin{proof}
Let $ k \in \mathbb{Z}^+ $, recall that $ -\lambda_k > 0$. At first step we will compute $\Lambda_n \varphi_k(r) = \frac{\int_0^r \varphi_k (s)   \wi{\mu}_n (ds)}{I_{M^n}(r)}$.
\begin{align*}
\int_0^r \varphi_k (s)   \wi{\mu}_n (ds) &= \int_0^r \varphi_k (s)   I_{M^n}'(s) ds \\
&= \frac{1}{\lambda_k} \int_0^r \Delta_n \varphi_k (s)   I_{M^n}'(s) ds \\
&=        \frac{1}{\lambda_k} \int_0^r      \big( \varphi_k''(s) + \ln(I_{M^n}'(s))'  \varphi_k' (s) \big)  I_{M^n}'(s) ds\\
&=      \frac{1}{\lambda_k} \int_0^r       ( \varphi_k'(s)I_{M^n}'(s))'   ds\\
&= \frac{1}{\lambda_k}\varphi_k'(r)I_{M^n}'(r)
\end{align*}
where in the last line we have used that $ I_{M^n}'(0)=0$. It follows that 
\bqn{Lambda-phi}  \Lambda_n \varphi_k(r) = \frac{1}{\lambda_k} \frac{\varphi_k'(r)I_{M^n}'(r)}{I_{M^n}(r)} .\eqn
Now  we show that $\Lambda_n \varphi_k \in L^2([0;\pi], \mu_n ) ,$ 
 and $(\Lambda_n \varphi_k)_{k \in \mathbb{Z}^+}$ is an orthogonal system of functions in $ L^2([0;\pi], \mu_n ) $ . 
Let $k,j \in \mathbb{Z}^+ $,
\begin{align*}
\langle  \Lambda_n\varphi_k ,\Lambda_n\varphi_j \rangle_{L^2([0;\pi], \mu_n )} &= \int_0^{\pi} \Lambda_n\varphi_k (s) \Lambda_n\varphi_j (s)\frac{I_{M^n}^2(s)}{I_{M^n}'(s)}  \,ds \\
&= \frac{1}{\lambda_k \lambda_j}\int_0^{\pi} \varphi_k'(s) \varphi_j'(s)I_{M^n}'(s)  \,ds \\
&= \frac{1}{\lambda_k \lambda_j} \Big( \big[\varphi_k \varphi_j' I_{M^n}'\big]_0^{\pi} -  \int_0^{\pi} \varphi_k(s) \big(\varphi_j'(s) I_{M^n}'(s)\big)'  \,ds    \Big)\\
&= -\frac{1}{\lambda_k \lambda_j} \int_0^{\pi} \varphi_k(s) \big( \varphi_j''(s) + \ln(I_{M^n}'(s))'  \varphi_j'(s)    \big) I_{M^n}'(s) \,ds\\
&= -\frac{1}{\lambda_k \lambda_j}  \int_0^{\pi} \varphi_k(s) \Delta_n \varphi_j (s)I_{M^n}'(s) \,ds \\
&= -\frac{1}{\lambda_k} \langle  \varphi_k ,\varphi_j \rangle_{L^2([0;\pi], \wi{\mu}_n )},
\end{align*}
where we use \eqref{Lambda-phi} in the second line and  in the forth line we use that :
\begin{itemize}
\item $I_{M^n}'(0)= I_{M^n}'(\pi)=0 $ for $ M^n  \neq \mathbb{P}^n(\mathbb{R}) ,$
\item  $I_{M^n}'(0)=0$ and $ \varphi_j'(\pi) =0 $ (for $ M^n  = \mathbb{P}^n(\mathbb{R}) $, since in this case $\varphi_j(r) = P^{\alpha,\alpha}_{2j} (\cos( \frac{r}{2}) )   $ and $(P^{\alpha,\alpha}_{2j})'(0)=0$ using e.g. (2.9) in \cite{Jaming}).
\end{itemize} 

Since $ (\varphi_k)_{k \in \mathbb{Z}^+} $ is orthogonal in $L^2([0;\pi], \wi{\mu}_n ) $ we get that
 $(\Lambda_n \varphi_k)_{k \in \mathbb{Z}^+}$ is an orthogonal system of functions in $ L^2([0;\pi], \mu_n(dr)) $, also :
\bqn{Lnorm} \Vert  \Lambda_n\varphi_k  \Vert^2_{L^2([0;\pi], \mu_n )} = -\frac{1}{\lambda_k} \Vert \varphi_k \Vert^2_{L^2([0;\pi], \wi{\mu}_n )}.\eqn 

By the commutation of the above diagram we have for $ k \in \mathbb{Z}^+ $,
$$ L_n \Lambda_n\varphi_k  = \Lambda_n L_n \varphi_k = \lambda_k \Lambda_n \varphi_k $$ hence 
$$ L_n (- \frac{1}{\lambda_k} \Lambda_n\varphi_k)  = - \Lambda_n \varphi_k .$$
Since $ \Lambda_n \varphi_k (\pi) = \frac{\int_0^{\pi} \varphi_k (s)   \wi{\mu}_n (ds)}{I_{M^n}(\pi)} = 0$ (this is not true for $ k=0$), and $ \Lambda_n \varphi_k  \in  C_b([0, \pi])$ we have by Proposition \ref{Poisson-harm} that 
$$G_n (\Lambda_n \varphi_k) =  - \frac{1}{\lambda_k} \Lambda_n\varphi_k$$
and so $\Lambda_n \varphi_k $ are eigenfunction of $G_n $ associated with eigenvalue $- \frac{1}{\lambda_k} .$

 Let us  shows  that all eigenvector of $G_n$ are of the type $\Lambda_n \varphi $ for $ \varphi$ an eigenfunction of $\Delta_n $.
 Let $ \psi \in L^2([0;\pi], \wi{\mu}_n ) $ and $\nu \in \mathbb{R}^*$ such that $G_n (\psi) = \nu \psi$, then using \eqref{gint} $\psi  $  is enough regular and $L_n(\nu \psi)= - \psi $ and so 
 \bqn{Ln} L_n(\psi)= - \frac{1}{\nu} \psi. \eqn
 Let \bqn{tildephi} \wi{\psi}(r) = \frac{\big(I_{M^n}(r) \psi(r) \big)'}{I'_{M^n}(r)}= \psi(r) + \frac{I_{M^n}(r)  }{I'_{M^n}(r)}\psi'(r) ,\eqn
  since  $ \wi{\mu}_n (dr) =  I_{M^n}'(r) dr $ and $I_{M^n}(0)= 0 $ we have $$ \psi(r) = \Lambda_n \wi{\psi} (r).$$
 Now we show that $\wi{\psi} $ is an eigenvector of $\Delta_n $ with eigenvalue $ -\frac{1}{\nu} $. \\
 Recall that $L_n := \partial^2_r + \lt(\ln \Big( \frac{I_{M^n}^2(r)}{I_{M^n}'(r)} \Big)\rt)'\partial_r $,  so for all $ r\in[0, \pi]$ \eqref{Ln} become
 $$\psi''(r) + \lt(\ln \Big( \frac{I_{M^n}^2(r)}{I_{M^n}'(r)} \Big)\rt)' \psi'(r) = - \frac{1}{\nu} \psi(r).$$  We have by derivation of \eqref{tildephi}
 \begin{align*}
 \wi{\psi}'(r) &= 2 \psi'(r) + \frac{I_{M^n}(r)}{I_{M^n}'(r)}\psi''(r)-   \frac{I_{M^n}(r)I_{M^n}''(r)}{(I_{M^n}'(r))^2}   \psi'(r) \\
 &= 2 \psi'(r) + \frac{I_{M^n}(r)}{I_{M^n}'(r)} \lt( - \frac{1}{\nu} \psi(r) - 2 \frac{I'_{M^n}(r)}{I_{M^n}(r)}\psi'(r) + \frac{I_{M^n}''(r)}{I_{M^n}'(r)}  \psi'(r)\rt)-   \frac{I_{M^n}(r)I_{M^n}''(r)}{(I_{M^n}'(r))^2}   \psi'(r) \\
 &=- \frac{1}{\nu} \frac{I_{M^n}(r)}{I_{M^n}'(r)}\psi(r)\\
 \end{align*}
 and
 \begin{align*}
 \wi{\psi}''(r) &=- \frac{1}{\nu} \Big(\psi(r) + \frac{I_{M^n}(r)}{I_{M^n}'(r)}\psi'(r) - \frac{I_{M^n}(r)I_{M^n}''(r)}{(I_{M^n}'(r))^2}\psi(r) \Big).\\
 \end{align*}
 It follows that
 \begin{align*}
 \Delta_n \wi{\psi}(r) &=  \wi{\psi}''(r) + \ln(I_{M^n}'(r))'\wi{\psi}'(r)   \\
 &=- \frac{1}{\nu} \Big(\psi(r) + \frac{I_{M^n}(r)}{I_{M^n}'(r)}\psi'(r) - \frac{I_{M^n}(r)I_{M^n}''(r)}{(I_{M^n}'(r))^2}\psi(r) \Big)\\
 &  - \frac{1}{\nu}\frac{I''_{M^n}(r)}{I_{M^n}'(r)} \frac{I_{M^n}(r)}{I_{M^n}'(r)}\psi(r) \\
 &=- \frac{1}{\nu} \Big(\psi(r) + \frac{I_{M^n}(r)}{I_{M^n}'(r)}\psi'(r) \Big)\\
 &=- \frac{1}{\nu}\wi{\psi}(r)
 \end{align*}
 Hence all the eigenfunction of $G_n $ are of the type  $\Lambda_n \varphi_k $ with eigenvalue $- \frac{1}{\lambda_k}$  with $ k \in \mathbb{Z}^+$ (we have to remove $\varphi_0 $ ).
 By Proposition \ref{Gdiag} we know that there exist a countably infinite orthonormal basis of $L^2([0, \pi],\mu_n) $ consisting of eigenvectors of $ G_n$, so $ \lt( \frac{\Lambda_n \varphi_k}{ \Vert \Lambda_n \varphi_k \Vert_{L^2([0;\pi], {\mu}_n )} } \rt)_{k \in \mathbb{Z}^+} $ is an orthonormal basis of $L^2([0, \pi],\mu_n) $ of eigenvectors of $ G_n$ with associated eigenvalue $ \lt(- \frac{1}{ \lambda_k} \rt)_{k \in \mathbb{Z}^+} $.
 It follows that for all $  g \in L^2([0, \pi],\mu_n) $ and for all $ m \in \mathbb{Z}_+ $
 \bq
 G^{\circ m }_n(g) &=& \sum_{k=1}^{\infty} ( - \frac{1}{ \lambda_k}  )^m \frac{1}{\Vert \Lambda_n \varphi_k \Vert^2_{L^2([0;\pi], {\mu}_n )}}\Lambda_n \varphi_k \langle \Lambda_n \varphi_k, g \rangle_{L^2([0;\pi], {\mu}_n )} \\
 &=& \sum_{k=1}^{\infty} ( - \frac{1}{ \lambda_k}  )^{m-1} \frac{1}{\Vert  \varphi_k \Vert^2_{L^2([0;\pi], \wi{\mu}_n )}}\Lambda_n \varphi_k \langle \Lambda_n \varphi_k, g \rangle_{L^2([0;\pi], {\mu}_n )} \\
 \eq

\end{proof}   

\begin{rem}
In order to compute compute $\EE[\tau_{M^n}^{m}]  $ as in Proposition \ref{mean-var-harm} 
we could be tempted to evaluate $G^{\circ m}$ at $ \un$, unfortunately $ \un \notin {L^2([0;\pi], {\mu}_n )}$ if  $ M^n  \neq \mathbb{P}^n(\mathbb{R}) $. Nevertheless  by Proposition \ref{gcirc} we know that $ G^{\circ m'} (\un) \in {L^2([0;\pi], {\mu}_n )}$ for sufficiently large $m'$.
\end{rem} 

Since $k_n \in L^2( [0, \pi]^2,{\mu}_n \otimes {\mu}_n )  $ we also have
 $$k_n(r,s) = \sum_{k=1}^{\infty}  - \frac{1}{ \lambda_k}   \frac{1}{\Vert \Lambda_n \varphi_k \Vert^2_{L^2([0;\pi], {\mu}_n )}}\Lambda_n \varphi_k(r) \Lambda_n \varphi_k(s) \\ $$
 and for all $ m \in \mathbb{Z}_+ $
  \bqn{kn} k^{(m)}_n(r,s) &=& \sum_{k=1}^{\infty}     ( - \frac{1}{ \lambda_k}  )^m        \frac{1}{\Vert \Lambda_n \varphi_k \Vert^2_{L^2([0;\pi], {\mu}_n )}}\Lambda_n \varphi_k(r) \Lambda_n \varphi_k(s) , \notag \\ 
  &=& \sum_{k=1}^{\infty}     ( - \frac{1}{ \lambda_k}  )^{m-1}        \frac{1}{\Vert \varphi_k \Vert^2_{L^2([0;\pi], \wi{\mu}_n )}}\Lambda_n \varphi_k(r) \Lambda_n \varphi_k(s) , \\
  \notag 
  \eqn
where $ k_n^{(m)}$ are the iterated  kernel of $k_n $, defined recursively as $k_n^{(1)} (r,s) = k_n (r,s)$ and $k_n^{(m+1)} (r,s) = \int k_n^{(m)} (r,s_1)k_n (s_1,s) {\mu}_n(ds_1) $.  Note that $k^{(m)}_n(r,s)$ is the kernel associated to $ G^{\circ m}$. In the next Proposition we will use some properties of  Jacobi polynomials to deduce spectral formula for $\EE[\tau_{M^n}^{m}]  $ for large enough $ m$.

\begin{pro} \label{knseries}
For all $ m \ge  \lceil \frac{n}{2}  \rceil  + 2$ the following series convergence  uniformly  and absolutely  (in the two variables) :
\bq k^{(m)}_n(r,s)   &=& \sum_{k=1}^{\infty}     ( - \frac{1}{ \lambda_k}  )^{m-1}        \frac{1}{\Vert \varphi_k \Vert^2_{L^2([0;\pi], \wi{\mu}_n )}}\Lambda_n \varphi_k(r) \Lambda_n \varphi_k(s) , \\
\eq
\end{pro}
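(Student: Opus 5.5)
To show that the series converges uniformly and absolutely in $(r,s)\in[0,\pi]^2$, I will bound each term in sup norm and sum the bounds; the Weierstrass M-test then does the rest. By \eqref{Lambda-phi}, $\Lambda_n\varphi_k(r)=\frac{1}{\lambda_k}\frac{\varphi_k'(r)I_{M^n}'(r)}{I_{M^n}(r)}$. Alternatively, $\Lambda_n\varphi_k$ is an average of $\varphi_k$ against the probability measure $\wi{\mu}_n(ds)/I_{M^n}(r)$ on $[0,r]$, which gives the simpler bound $\Vert\Lambda_n\varphi_k\Vert_\infty\le\Vert\varphi_k\Vert_\infty$. So the $k$-th term is at most
$$ |\lambda_k|^{-(m-1)}\frac{\Vert\varphi_k\Vert_\infty^2}{\Vert\varphi_k\Vert^2_{L^2([0,\pi],\wi{\mu}_n)}}. $$
The problem therefore reduces to polynomial growth estimates on Jacobi polynomials.

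\textbf{Step 1: sup norm.} Since $\varphi_k(r)=P^{\alpha,\beta}_k(\cos\gamma r)$ (or $P^{\alpha,\alpha}_{2k}$ in the real projective case), I will use the classical bound $\max_{[-1,1]}|P^{\alpha,\beta}_k|=\binom{k+q}{k}\sim c\,k^{q}$, with $q=\max(\alpha,\beta)\ge -1/2$ (Szeg\H{o}, Theorem 7.32.1). Here $q=\alpha=\frac{n-2}{2}$, so $\Vert\varphi_k\Vert_\infty^2\le C_n k^{n-2}$.

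\textbf{Step 2: $L^2$ norm.} After the change of variable $t=\cos(\gamma r)$, $\wi{\mu}_n$ becomes a constant multiple of the Jacobi weight $(1-t)^\alpha(1+t)^\beta\,dt$. The classical formula
$$ h_k=\frac{2^{\alpha+\beta+1}}{2k+\alpha+\beta+1}\frac{\Gamma(k+\alpha+1)\Gamma(k+\beta+1)}{k!\,\Gamma(k+\alpha+\beta+1)}\sim c\,k^{-1} $$
gives $\Vert\varphi_k\Vert^2_{L^2([0,\pi],\wi{\mu}_n)}\ge c_n k^{-1}$. The $2k$ index in the real projective case only changes constants.

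\textbf{Step 3: summation.} By \eqref{eigenvalue}, $|\lambda_k|\ge k^2$, so the $k$-th term is bounded by $C_n k^{-2(m-1)}k^{n-1}$. This is summable once $2m-2-n+1>1$, i.e. $m>\frac{n}{2}+1$, which holds for $m\ge\lceil\frac n2\rceil+2$. The M-test then gives uniform absolute convergence of the series, and its sum is continuous on $[0,\pi]^2$.

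\textbf{Step 4: identification with $k^{(m)}_n$.} The expansion \eqref{kn} holds in $L^2(\mu_n\otimes\mu_n)$. Hence the uniform limit and $k^{(m)}_n$ agree almost everywhere. To get equality everywhere, I will show that $k^{(m)}_n$ is continuous for $m\ge2$, by checking that $k_n(r,\cdot)\in L^1(\mu_n)$ continuously in $r$, using the asymptotics from Propositions \ref{Poisson-harm} and \ref{gcirc}.

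The main obstacle is Step 1: one needs a clean citation for the sup norm of Jacobi polynomials valid for all the parameter pairs $(\alpha,\beta)$ that occur, including $\beta=-1/2$ for the sphere and the real projective space. Once that growth is polynomial of the stated degree, the rest is bookkeeping of exponents.
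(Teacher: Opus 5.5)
Your Steps 1--3 are the paper's proof. The paper uses the same averaging bound $\Vert\Lambda_n\varphi_k\Vert_\infty\le\Vert\varphi_k\Vert_\infty=P^{\alpha,\beta}_k(1)=\frac{\Gamma(k+\alpha+1)}{k!\,\Gamma(\alpha+1)}$ and divides by the classical Jacobi $L^2$ norm to get $\Vert\varphi_k\Vert^2_\infty/\Vert\varphi_k\Vert^2_{L^2([0;\pi],\wi{\mu}_n)}\le C_{\alpha,\beta}\,k^{2\alpha+1}=C\,k^{n-1}$. It then sums against $|\lambda_k|^{-(m-1)}\le k^{-2(m-1)}$, which is exactly your exponent count. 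The obstacle you flag in Step 1 is not real. For $\mathbb{S}^n$ the paper's parameters are $\beta=\alpha=\frac{n-2}{2}$, not $-\frac12$. In all four cases the relevant $q=\max(\alpha,\beta)$ equals $\alpha=\frac{n-2}{2}\ge 0$, so Szeg\H{o}'s Theorem 7.32.1 applies as is; the paper uses the same fact, citing Jaming.

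The paper stops after this bound and takes the pointwise identity with $k^{(m)}_n$ for granted from \eqref{kn}. Your Step 4 tries to justify that identity, but its key claim, that $k^{(m)}_n$ is continuous for every $m\ge 2$, is false. Since $\frac{I_{M^n}'}{I_{M^n}^2}=-\big(\frac{1}{I_{M^n}}\big)'$, one has $k_n(r,s)=\frac{1}{I_{M^n}(r\vee s)}-\frac{1}{I_{M^n}(\pi)}$. Hence
\[
k^{(2)}_n(0,0)=\int_0^\pi k_n(0,s)^2\,\mu_n(ds)=\int_0^\pi\frac{\big(1-I_{M^n}(s)/I_{M^n}(\pi)\big)^2}{I_{M^n}'(s)}\,ds=+\infty ,
\]
because $I_{M^n}'(s)\sim c\,s^{n-1}$ at $0$ and $n\ge 2$. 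Iterating the lower bound $k^{(j+1)}_n(0,s)\ge k_n(s,s)\int_0^s k^{(j)}_n(0,s_1)\,\mu_n(ds_1)$ shows more generally that $k^{(m)}_n(0,0)=+\infty$ whenever $2m\le n+2$. So finiteness at the corner is only possible for $2m>n+2$, consistent with the range of the proposition.

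The $L^1(\mu_n)$-continuity of $r\mapsto k_n(r,\cdot)$ that you propose only yields continuity of $k^{(m)}_n$ on $[0,\pi]^2\setminus\{(0,0)\}$, where the other factor $k_n(s_1,s)\le k_n(s,s)$ is bounded. That is enough off the corner, since there you have two continuous functions agreeing $\mu_n\otimes\mu_n$-a.e. At $(0,0)$ you need a separate argument. One option: $k_n\ge 0$ is nonincreasing in each variable, hence so is every $k^{(m)}_n$, and monotone convergence gives $k^{(m)}_n(0,0)=\lim_{r\downarrow 0}k^{(m)}_n(r,r)$. That limit equals the value of your continuous series at $(0,0)$. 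Alternatively, note that Theorem \ref{moment} only needs the identity after integrating in $s$, which is an a.e.\ equality between two continuous functions of $r$.
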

\begin{proof}
Recall that, we could express $(\varphi_k )_k$  an orthonormal basis of  eigenvectors of $ \Delta_n$ in term of Jacobi polynomials as
$$ \varphi_k (r) = P^{\alpha,\beta}_k ( \cos (\gamma r)  ) ,$$
for $ k \in \mathbb{Z}_+ $ (and for real projective space $ \varphi_k (r) = P^{\alpha,\alpha}_{2k} ( \cos (\gamma r)  ) $). 
Note that the definition of $\wi{\mu}_n(dr)= I_{M^n}'(r)dr$  \eqref{Idef} is up to a multiplicative constant, also changing $\wi{\mu}_n(dr)$ by a multiplicative constant change $ \mu_n$  by the same constant and the product $k_n (r,s) {\mu}_n(ds) $ does not change, hence for the computation of $ G^{\circ m }_n$ or for the convergence of the series   $k_n (r,s) $ we would change $\wi{\mu}_n(dr)$ by a multiplicative constant such that 
\bqn{normI} \Vert \varphi_k \Vert^2_{L^2([0;\pi], \wi{\mu}_n )} = \Vert  P^{\alpha,\beta}_k  \Vert^2_{L^2(E_{M^n}, (1-t )^{\alpha}(1+t)^{\beta} )dt)} ,\eqn
where  $\alpha, \beta$ are defined in \eqref{parameter2} and we use a change of variable $t=\cos(\gamma r) $, and
 \bq
E_M &=&
\lt\{\begin{array}{ll}
\lt[-1,1 \rt] & \quad \text{if} \quad M^n \neq \mathbb{P}^n(\mathbb{R}) \\
\lt[0,1 \rt]  & \quad \text{if} \quad  M^n = \mathbb{P}^n(\mathbb{R}) \\
\end{array}  \rt. \\
  \eq
Since $\Lambda_n \varphi_k(r) = \frac{\int_0^r \varphi_k (s)   \wi{\mu}_n (ds)}{I_{M^n}(r)},$ and $\alpha = \frac{n-2}{2}  \ge \beta =\frac{\rho-1}{2} >-\frac12$, 
 using (e.g. equation (2.10) in \cite{Jaming}) we get
  \bqn{Pinf} \Vert\Lambda_n \varphi_k \Vert_{\infty} \le \Vert  \varphi_k \Vert_{\infty}  =  \Vert   {P^{\alpha,\beta}_k}_{ \vert_{E_M }}\Vert_{\infty} = P^{\alpha,\beta}_k(1) = \frac{\Gamma (k+\alpha +1 ) }{k ! \Gamma (\alpha +1 )} \eqn
  and by (e.g. equation (2.7) and (2.8) in \cite{Jaming} ) we have
   \bqn{Pint}
\Vert  P^{\alpha,\beta}_k  \Vert^2_{L^2(E_{M^n}, (1-t )^{\alpha}(1+t)^{\beta} )dt)}  &=&
\lt\{\begin{array}{ll}
\frac{2^{\alpha + \beta+1} \Gamma (k+\alpha +1 )\Gamma (k+\beta +1 )}{k! (2k + \alpha +\beta +1  ) \Gamma (k+\alpha +\beta+1 )}  & \quad \text{if}\quad M^n \neq \mathbb{P}^n(\mathbb{R}) \\
\frac{4^{\alpha } \Gamma^2 (2k+\alpha +1 )}{(2k)! (4k + 2\alpha +1  ) \Gamma (2k+2\alpha + 1 )}  & \quad \text{if} \quad M^n = \mathbb{P}^n(\mathbb{R}) \\
\end{array}  \rt.  \\
\notag
  \eqn
We will now derive a condition on $m$ to assure the uniform (in the two variables) and the absolute convergence of the series of $k^{(m)}_n(r,s)$. In what follows, $ C_{\alpha,\beta}$ are some constant that could varying a line to the others.

If   $M^n \neq \mathbb{P}^n(\mathbb{R})$, we have using \eqref{normI} , \eqref{Pinf} and \eqref{Pint} :
\begin{align*}
  \vert (  \frac{ \Lambda_n \varphi_k(r) \Lambda_n \varphi_k(s)  }{\Vert \varphi_k \Vert^2_{L^2([0;\pi], \wi{\mu}_n)}} \vert &\le C_{\alpha,\beta} \frac{\Gamma (k+\alpha +1 )  (2k + \alpha +\beta +1  ) \Gamma (k+\alpha +\beta+1 ) }{ k ! \Gamma (k+\beta +1 )}  \\
  &\sim_{k \to \infty} C_{\alpha,\beta} \frac{\Gamma (k+\alpha +1 )   \Gamma (k+\alpha +\beta+1 ) }{ (k -1) !  \Gamma (k+\beta +1 )}  \\ 
  &\sim_{k \to \infty} C_{\alpha,\beta} \frac{\Gamma (k+\alpha +1 )   \Gamma (k+\alpha +\beta+1 ) }{ \Gamma(k)   \Gamma (k+\beta +1 )}  \\ 
  &\sim_{k \to \infty} C_{\alpha,\beta} \frac{ (k+\alpha +1 )^{k+\alpha + \frac12 }     (k+\alpha +\beta+1 )^{k+\alpha +\beta+\frac12} }{ (k)^{k -\frac12}   (k+\beta +1 )^{k+\beta + \frac12 }} , \\ 
   &\sim_{k \to \infty} C_{\alpha,\beta} k^{2 \alpha +1 }, \\
\end{align*}
where we have used that $ \Gamma(x) \sim_{x \to \infty}  \sqrt{2 \pi} x^{x- \frac12 } e^{-x}$.
 Recall that by \eqref{eigenvalue},  $ -\lambda_k \sim k^2$.
 Hence 
$$  \vert ( - \frac{1}{ \lambda_k}  )^{m-1}  \Lambda_n \varphi_k(r) \Lambda_n \varphi_k(s)      \frac{1}{\Vert \varphi_k \Vert^2_{L^2([0;\pi], \wi{\mu}_n)}} \vert \le C_{\alpha,\beta} \frac{ k ^{2 \alpha +1} }{k^{2(m-1)}} . $$
It follows that if $ m\ge   \lceil \frac{n}{2} \rceil  +2  $ the series of $ k_n^{(m)}(r,s)$   convergence  uniformly  and absolutely  in the two variables. With similar computation we have the same conclusion if $M^n =\mathbb{P}^n(\mathbb{R}).$
\end{proof}   
\begin{theo}\label{moment}
For all $ n\ge 2 $,  and for all $ m \ge  \lceil \frac{n}{2}  \rceil + 2$,
 \bq  \frac{\EE[\tau_{M^n}^{m}]}{ m!}  &=&  \sum_{k=1}^{\infty}     (  \frac{1}{ - \lambda_k}  )^{m} (-1)^{k+1} \mathcal{C}_k(M^n,\alpha,\beta), \eq
 where
 \bq 
 \mathcal{C}_k(M^n,\alpha,\beta) &=& \lt\{ \begin{array}{ll}
 \frac{ (2k + \alpha +\beta +1  ) \Gamma (k+\alpha +\beta+1 )}{ k ! \Gamma (\alpha +\beta+2 )} & \quad \text{if} \quad M^n \neq \mathbb{P}^n(\mathbb{R}) \\
    \frac{(4k + 2\alpha +1  ) \Gamma (2k+2\alpha + 1 )\Gamma(\alpha +1)}{ 2^{2k} k! \Gamma (k+\alpha + 1 ) \Gamma (2\alpha + 2 )}  & \quad \text{if} \quad M^n = \mathbb{P}^n(\mathbb{R}) \\
   \end{array}
 \rt.\eq
 
\end{theo}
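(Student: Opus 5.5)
The plan is to start from Proposition~\ref{mean-var-harm}, $\EE[\tau_{M^n}^m]/m! = G_n^{\circ m}[\un](0)$, and peel off a power of $G_n$ that already lands in $L^2([0,\pi],\mu_n)$. Concretely, I write $G_n^{\circ m}[\un](0) = G_n\big[G_n^{\circ (m-1)}[\un]\big](0)$. By Proposition~\ref{gcirc}, $h:=G_n^{\circ(m-1)}[\un]\in L^2([0,\pi],\mu_n)$ as soon as $m-1\ge\lceil n/2\rceil+2$. For the boundary value $m=\lceil n/2\rceil+2$, I would recheck part (i) of that proof: the $L^2$ condition on the exponent, $\beta>\frac{\alpha-1}{2}$, is weaker than the boundedness condition, so one fewer iteration should suffice. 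Failing that, I would use the uniform convergence of Proposition~\ref{knseries} directly.

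Next I expand $h$ in the Hilbert basis of Proposition~\ref{iso-spec} via \eqref{spectdes}. The series converges in $L^2(\mu_n)$, and I then apply the functional $g\mapsto G_n[g](0)=\int_0^\pi k_n(0,s)g(s)\,\mu_n(ds)$. This functional is continuous on $L^2(\mu_n)$ because $k_n(0,\cdot)\in L^2(\mu_n)$: indeed $k_n(0,s)\le k_n(0,0)$, so
\[
\int_0^\pi k_n(0,s)^2\,\mu_n(ds)\le k_n(0,0)\,G_n[\un](0)<\infty .
\]
This justifies exchanging the sum with the evaluation at $0$. Using $\Lambda_n\varphi_k(0)=\varphi_k(0)$, which follows from \eqref{Lambda-phi} or directly from the definition of $\Lambda_n$ at $0$, I get
\[
G_n^{\circ m}[\un](0)=\sum_{k\ge1}\Big(-\frac1{\lambda_k}\Big)^m\frac{\varphi_k(0)\,\langle \Lambda_n\varphi_k,\un\rangle_{\mu_n}}{\Vert\Lambda_n\varphi_k\Vert^2_{\mu_n}} .
\]
This uses self-adjointness to move $G_n^{\circ(m-1)}$ onto $\Lambda_n\varphi_k$, giving $\langle\Lambda_n\varphi_k,h\rangle=(-1/\lambda_k)^{m-1}\langle \Lambda_n\varphi_k,\un\rangle$. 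That step is the delicate one, since $\un\notin L^2(\mu_n)$. I would justify it with the symmetric kernel \eqref{gker} and Fubini: the integrals converge absolutely because $\Lambda_n\varphi_k\,I_{M^n}^2/I_{M^n}'=\varphi_k' I_{M^n}/\lambda_k$ is bounded, by \eqref{Lambda-phi}.

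Then I compute the inner product explicitly using \eqref{Lambda-phi}:
\[
\langle\Lambda_n\varphi_k,\un\rangle_{\mu_n}=\frac1{\lambda_k}\int_0^\pi\varphi_k'I_{M^n}\,ds=\frac1{\lambda_k}\Big([\varphi_kI_{M^n}]_0^\pi-\int_0^\pi\varphi_k\,\wi\mu_n(ds)\Big)=\frac{\varphi_k(\pi)I_{M^n}(\pi)}{\lambda_k},
\]
since $\varphi_k\perp\varphi_0=1$ for $k\ge1$. Combining this with \eqref{Lnorm}, the $k$-th coefficient becomes
\[
\Big(-\frac1{\lambda_k}\Big)^m\Big(-\frac{\varphi_k(0)\varphi_k(\pi)I_{M^n}(\pi)}{\Vert\varphi_k\Vert^2_{\wi\mu_n}}\Big).
\]

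Finally I would substitute the Jacobi data, using the normalization \eqref{normI}:
\[
\varphi_k(0)=P_k^{\alpha,\beta}(1)=\frac{\Gamma(k+\alpha+1)}{k!\,\Gamma(\alpha+1)},\qquad \varphi_k(\pi)=P_k^{\alpha,\beta}(-1)=(-1)^k\frac{\Gamma(k+\beta+1)}{k!\,\Gamma(\beta+1)},
\]
\[
I_{M^n}(\pi)=\int_{-1}^1(1-t)^\alpha(1+t)^\beta\,dt=\frac{2^{\alpha+\beta+1}\Gamma(\alpha+1)\Gamma(\beta+1)}{\Gamma(\alpha+\beta+2)},
\]
together with \eqref{Pint} for the norm. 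The powers of $2$ and the factors $\Gamma(k+\alpha+1)$, $\Gamma(k+\beta+1)$ cancel, leaving $(-1)^{k+1}\mathcal C_k$. For $\mathbb P^n(\mathbb R)$ I would repeat the computation with $P^{\alpha,\alpha}_{2k}$ on $[0,1]$. There the endpoint $r=\pi$ corresponds to $t=0$, so the value $P^{\alpha,\alpha}_{2k}(0)$ is needed; it equals $(-1)^k\Gamma(2k+2\alpha+1)\Gamma(\alpha+1)/(\ldots)$ up to standard constants, and should produce the stated $2^{-2k}$ form of $\mathcal C_k$. I expect the main obstacles to be the justification of moving $G_n$ across onto the non-$L^2$ function $\un$, together with this constant bookkeeping in the real projective case.
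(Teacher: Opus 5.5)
Your argument breaks at the step you treat as routine: the claim that $g\mapsto G_n[g](0)=\int_0^\pi k_n(0,s)\,g(s)\,\mu_n(ds)$ is a continuous functional on $L^2([0,\pi],\mu_n)$. Since $\frac{I_{M^n}'}{I_{M^n}^2}=-\big(\frac{1}{I_{M^n}}\big)'$, you have $k_n(0,s)=\frac{1}{I_{M^n}(s)}-\frac{1}{I_{M^n}(\pi)}$. So $k_n(0,0)=+\infty$, and your bound $\int_0^\pi k_n(0,s)^2\mu_n(ds)\le k_n(0,0)\,G_n[\un](0)$ says nothing. In fact $k_n(0,\cdot)\notin L^2(\mu_n)$. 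Near $0$, $k_n(0,s)\sim c\,s^{-n}$ and $\mu_n(ds)\sim c\,s^{n+1}ds$, so $k_n(0,s)^2\mu_n(ds)\sim c\,s^{1-n}ds$, which is not integrable for any $n\ge 2$. The functional is therefore unbounded on $L^2(\mu_n)$. For $n\ge 3$ it is not even finite: $g(s)=s^{-2}\un_{(0,1)}(s)$ lies in $L^2(\mu_n)$ while $G_n[g](0)=+\infty$. Consequently, $L^2(\mu_n)$-convergence of the expansion of $h=G_n^{\circ(m-1)}[\un]$ does not let you evaluate $G_n[h]$ termwise at the single point $r=0$. By contrast, the step you flagged as delicate is harmless. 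By Tonelli, $\iint|\Lambda_n\varphi_k(s)|\,k_n^{(m-1)}(s,u)\,\mu_n(du)\,\mu_n(ds)=\int_0^\pi|\Lambda_n\varphi_k|\,h\,d\mu_n<\infty$. Fubini together with $G_n^{\circ(m-1)}[\Lambda_n\varphi_k]=(-1/\lambda_k)^{m-1}\Lambda_n\varphi_k$ then gives $\langle\Lambda_n\varphi_k,h\rangle_{\mu_n}=(-1/\lambda_k)^{m-1}\langle\Lambda_n\varphi_k,\un\rangle_{\mu_n}$.

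The value at $0$ has to come from a pointwise (uniform) convergence statement. The paper expands the iterated kernel $k_n^{(m)}(0,s)$ using Proposition \ref{knseries} and integrates against $\mu_n(ds)$ by dominated convergence. The domination comes from $\Lambda_n\varphi_k(s)\frac{I_{M^n}^2(s)}{I_{M^n}'(s)}=\frac{\varphi_k'(s)I_{M^n}(s)}{\lambda_k}$ and $\Vert\varphi_k'\Vert_\infty\le c\,k^{\alpha+2}$, and this is where $m\ge\lceil\frac n2\rceil+2$ is used.

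Within your framework a cheaper repair works. Since $G_n$ is bounded on $L^2(\mu_n)$, $G_n^{\circ m}[\un]=G_n[h]=\sum_{k\ge1}c_k\Lambda_n\varphi_k$ in $L^2(\mu_n)$, with $c_k=\big(-\frac{1}{\lambda_k}\big)^m\big(-\frac{\varphi_k(\pi)I_{M^n}(\pi)}{\Vert\varphi_k\Vert^2_{L^2([0;\pi],\wi\mu_n)}}\big)=O(k^{\beta+1-2m})$. Also $\Vert\Lambda_n\varphi_k\Vert_\infty\le P^{\alpha,\beta}_k(1)=O(k^{\alpha})$, so the series converges uniformly on $[0,\pi]$. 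Its continuous sum agrees $\mu_n$-a.e. with the continuous function $G_n^{\circ m}[\un]$. Because $\mu_n$ has positive density on $(0,\pi)$, they agree everywhere, in particular at $r=0$. The same argument works for $\mathbb{P}^n(\mathbb{R})$ with $P^{\alpha,\alpha}_{2k}$.

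With this substitution, the rest of your computation coincides with the paper's. That includes the integration by parts giving $\langle\Lambda_n\varphi_k,\un\rangle_{\mu_n}=\varphi_k(\pi)I_{M^n}(\pi)/\lambda_k$ and the Jacobi constants for $M^n\neq\mathbb{P}^n(\mathbb{R})$. For $\mathbb{P}^n(\mathbb{R})$ your constant bookkeeping is off. In fact $P^{\alpha,\alpha}_{2k}(0)=(-1)^k\frac{\Gamma(2k+\alpha+1)}{2^{2k}k!\,\Gamma(k+\alpha+1)}$, and the factor $\Gamma(2k+2\alpha+1)$ in $\mathcal{C}_k$ comes from the norm \eqref{Pint}, not from $P^{\alpha,\alpha}_{2k}(0)$.
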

\begin{proof}
Using Proposition \ref{mean-var-harm}   we have for all $m \ge  \lceil \frac{n}{2}  \rceil +2 $
\bq  
\frac{\EE_{r}[\tau_{M^n}^{m}]}{m!} &=&  G_n^{\circ m}[\un] (r) \\
&=&    \int_0^{\pi} k_n^{(m)} (r,s) {\mu}_n(ds)  \\
&= &   \int_0^{\pi} \sum_{k=1}^{\infty}     ( - \frac{1}{ \lambda_k}  )^{m-1}        \frac{1}{\Vert \varphi_k \Vert^2_{L^2([0;\pi], \wi{\mu}_n)}}\Lambda_n \varphi_k(r) \Lambda_n \varphi_k(s) {\mu}_n(ds) , \\ 
&= &   \sum_{k=1}^{\infty}     ( - \frac{1}{ \lambda_k}  )^{m-1}   \frac{1}{\Vert \varphi_k \Vert^2_{L^2([0;\pi], \wi{\mu}_n)}}\Lambda_n \varphi_k(r)    \int_0^{\pi}   \Lambda_n \varphi_k(s) {\mu}_n(ds) , \\
  \eq
  where  we use similar computation as in the proof of Proposition \ref{knseries} together with \eqref{Pinf}, \eqref{Pint}, \eqref{Lambda-phi} and $$   \Vert \varphi_k' \Vert_{\infty}  \le  \Vert   \big( {P^{\alpha,\beta}_k} \big)'_{ \vert_{E_M }}\Vert_{\infty}  \le c k^{\alpha +2}$$ e.g. (2.11) in \cite{Jaming}, to get
  $$  \vert ( - \frac{1}{ \lambda_k}  )^{m-1}  \Lambda_n \varphi_k(r) \Lambda_n \varphi_k(s)      \frac{1}{\Vert \varphi_k \Vert^2_{L^2([0;\pi], \wi{\mu}_n)}}  \frac{I_{M^n}^2(s)}{I_{M^n}'(s)} \vert \le C_{\alpha,\beta} \frac{ k ^{2 \alpha +3} }{k^{2(m)}}  $$
  in order to justify the dominated convergence Theorem used in the fourth line. From \eqref{Lambda-phi}
  we also get after integration by part that for all $ k \in \mathbb{Z}^+$ 
  
  \bq
   \int_0^{\pi} \Lambda_n \varphi_k(s) {\mu}_n(ds) &=&   \frac{1}{\lambda_k} \int_0^{\pi} \frac{\varphi_k'(s)I_{M^n}'(s)}{I_{M^n}(s)} {\mu}_n(ds) \\
  &= &  \frac{1}{\lambda_k}\int_0^{\pi} {\varphi_k'(s)I_{M^n}(s)} ds  \\
  &= &  \frac{1}{\lambda_k} \lt( [\varphi_k I_{M^n}]_0^{\pi} - \int_0^{\pi} \varphi_k(s)I_{M^n}'(s) ds\rt)\\
  &=& \frac{\varphi_k(\pi)I_{M^n}(\pi) }{\lambda_k} \\
   &=&  \lt\{\begin{array}{ll}
   (-1)^k\frac{P^{\beta,\alpha}_k(1) }{\lambda_k} I_{M^n}(\pi) & \quad \text{if} \quad M^n \neq \mathbb{P}^n(\mathbb{R}) \notag\\
   \frac{P^{\alpha,\alpha}_{2k}(0) }{\lambda_k} I_{M^n}(\pi) & \quad \text{if}\quad M^n = \mathbb{P}^n(\mathbb{R}) \notag\\
   \end{array}  \rt.  \\
  \eq
 where in the fourth equality we use that  $I_{M^n}(0) =0$ as well as   $\int_0^{\pi} \varphi_k(s)I_{M^n}'(s) ds = \langle \varphi_k , \varphi_0 \rangle_{L^2([0;\pi], \wi{\mu}_n )} =0$. In the last equality we use  that:
 
\begin{enumerate}

 \item if $M^n \neq \mathbb{P}^n(\mathbb{R})$, since $ \varphi_k (r) = P^{\alpha,\beta}_k ( \cos (\gamma r)  )$,  $ \gamma = 1$, and   $ P^{\alpha,\beta}_k(-t) = (-1)^k P^{\beta,\alpha}_k(t)$ c.f. (2.9) of \cite{Jaming}, then $\varphi_k (\pi) = (-1)^k P^{\beta,\alpha}_k(1)   $.
Note that  $ P^{\beta,\alpha}_k(1) = \frac{\Gamma (k+\beta +1 ) }{k ! \Gamma (\beta +1 )} $ for all $\alpha, \beta > -1 $ e.g. equation (4.21.2) in \cite{zbMATH03477793}, it is also a consequence of the following Rodrigues Formula.

\item if $M^n = \mathbb{P}^n(\mathbb{R})$, since  $ \varphi_k (r) = P^{\alpha,\alpha}_{2k} ( \cos (\gamma r)  ) $,  $ \gamma = \frac12 $, $\varphi_k (\pi)=  P^{\alpha,\alpha}_{2k}(0)$. Let us compute $P^{\alpha,\alpha}_{2k}(0)  $ for the real projective case. Using  Rodrigues Formula (e.g. page 4 \cite{Jaming} ) we obtain
\bq P^{\alpha,\alpha}_{2k}(0)  &= &\frac{1}{2^{2k} (2k)!} \frac{d^{2k}}{dt^{2k}}_{\vert_{t=0}} (1-t^2)^{2k+ \alpha} \\
&= &\frac{(-1)^k}{2^{2k} } \frac{(2k + \alpha)(2k + \alpha -1 )... (k + \alpha +1)}{k!} \\
&=& \frac{(-1)^k}{2^{2k} } \frac{\Gamma(2k + \alpha + 1)}{k! \Gamma(k + \alpha +1)} \\
 \eq

\end{enumerate} 

 From the normalization of $\wi{\mu}_n(dr) $ in \eqref{normI}, we have using \eqref{Pint}
 \bqn{I}
  I_{M^n}(\pi) &=&   \int_0^{\pi} \wi{\mu}_n(dr)  \notag \\
         &=&  \Vert  P^{\alpha,\beta}_0 \Vert^2_{L^2(E_{M^n}, (1-t )^{\alpha}(1+t)^{\beta} )dt)} \notag \\
&=& \lt\{\begin{array}{ll}
\frac{2^{\alpha + \beta+1} \Gamma (\alpha +1 )\Gamma (\beta +1 )}{ ( \alpha +\beta +1  ) \Gamma (\alpha +\beta+1 )}  & \quad \text{if}\quad M^n \neq \mathbb{P}^n(\mathbb{R}) \notag\\
\frac{4^{\alpha } \Gamma^2 (\alpha +1 )}{(  2\alpha +1  ) \Gamma (2\alpha + 1 )}  & \quad \text{if} \quad  M^n = \mathbb{P}^n(\mathbb{R}) \notag \\
\end{array}  \rt.  \\
 \eqn
 
 From the above computation we get :
 \bq
 \int_0^{\pi} \Lambda_n \varphi_k(s) {\mu}_n(ds) &=&  \lt\{\begin{array}{ll}
 \frac{ (-1)^k}{\lambda_k}  \frac{2^{\alpha + \beta+1} \Gamma (k +\beta +1 ) \Gamma (\alpha +1 )}{ k ! ( \alpha +\beta +1  ) \Gamma (\alpha +\beta+1 )}  & \quad \text{if} \quad M^n  \neq \mathbb{P}^n(\mathbb{R}) \notag\\
 \frac{(-1)^k}{\lambda_k 2^{2k}  } \frac{\Gamma(2k + \alpha + 1)}{k! \Gamma(k + \alpha +1)}\frac{4^{\alpha } \Gamma^2(\alpha +1 )}{ (  2\alpha +1  ) \Gamma (2\alpha + 1 )}  & \quad \text{if} \quad M^n = \mathbb{P}^n(\mathbb{R}) \notag \\
\end{array}  \rt.  \\
 \eq
  
 Also by \eqref{Pinf} if $M^n \neq \mathbb{P}^n(\mathbb{R})  $
 \bq \Lambda_n \varphi_k(0) &=& \lim_{r \to 0}  \Lambda_n \varphi_k(r) \\
 & =& \lim_{r \to 0} \frac{\int_0^r \varphi_k (s)   \wi{\mu}_n (ds)}{I_{M^n}(r)}\\
 & =& \varphi_k (0) = P^{\alpha,\beta}_k(1) = \frac{\Gamma (k+\alpha +1 ) }{k ! \Gamma (\alpha +1 )}\\
  \eq
  and if $M^n =\mathbb{P}^n(\mathbb{R})$  change $ k$ by $2k$ in the above formula.

  Putting the above computations together we obtain :
  \bq
    \Lambda_n \varphi_k(0)    \int_0^{\pi}   \Lambda_n \varphi_k(s) {\mu}_n(ds) &= &   \frac{ (-1)^k}{\lambda_k}  \lt\{\begin{array}{ll}
\frac{2^{\alpha + \beta+1} \Gamma (k +\beta +1 ) \Gamma (k+\alpha +1 )}{ (k !)^2 ( \alpha +\beta +1  ) \Gamma (\alpha +\beta+1 )}  & \quad \text{if} \quad M^n \neq \mathbb{P}^n(\mathbb{R}) \notag\\
\frac{4^{\alpha } \Gamma^2 (2k+ \alpha +1 )\Gamma( \alpha +1) }{ 2^{2k}(2k)! k!\Gamma (k+ \alpha +1 ) (  2\alpha +1  ) \Gamma (2\alpha + 1 )}  & \quad \text{if} \quad M^n = \mathbb{P}^n(\mathbb{R}) \notag \\
\end{array}  \rt.   \\
  \eq
  and so
    \bq
 \frac{ 1}{\Vert \varphi_k \Vert^2_{L^2([0;\pi], \wi{\mu}_n )}}   \Lambda_n \varphi_k(0)    \int_0^{\pi}   \Lambda_n \varphi_k(s) {\mu}_n(ds) &= &   \frac{ (-1)^k}{\lambda_k}  \lt\{\begin{array}{ll}
\frac{ (2k + \alpha +\beta +1  ) \Gamma (k+\alpha +\beta+1 )}{ k !( \alpha +\beta +1  ) \Gamma (\alpha +\beta+1 )}  & \quad \text{if} \quad M^n \neq \mathbb{P}^n(\mathbb{R}) \notag\\
\frac{(4k + 2\alpha +1  ) \Gamma (2k+2\alpha + 1 )\Gamma(\alpha +1)}{ 2^{2k} k! \Gamma (k+\alpha + 1 )(  2\alpha +1  ) \Gamma (2\alpha + 1 )}  & \quad \text{if} \quad  M^n = \mathbb{P}^n(\mathbb{R}) \notag \\
\end{array}  \rt.   \\
  \eq
  It's follows that if $ M^n \neq \mathbb{P}^n(\mathbb{R})$ then
  \bq  
\frac{\EE[\tau_{M^n}^{m}]}{m!}
&= &    \sum_{k=1}^{\infty}     (  \frac{1}{ - \lambda_k}  )^{m} (-1)^{k+1}  \frac{ (2k + \alpha +\beta +1  ) \Gamma (k+\alpha +\beta+1 )}{ k !( \alpha +\beta +1  ) \Gamma (\alpha +\beta+1 )}  , \\
  \eq
  
 and  if $ M^n = \mathbb{P}^n(\mathbb{R})$ then
  \bq  
\frac{\EE[\tau_{M^n}^{m}]}{m!}
&= &    \sum_{k=1}^{\infty}     (  \frac{1}{ - \lambda_k}  )^{m} (-1)^{k+1} \frac{(4k + 2\alpha +1  ) \Gamma (2k+2\alpha + 1 )\Gamma(\alpha +1)}{ 2^{2k} k! \Gamma (k+\alpha + 1 )(  2\alpha +1  ) \Gamma (2\alpha + 1 )} . 
  \eq
\end{proof}   
\newline
Let us compute the tail distribution of $ \tau_{M^n}$

\begin{theo}\label{queue1}
With the same notation of Theorem \ref{moment}, we have  for all $ x  > 0$
\bqn{queue} \mathbb{P} [ \tau_{M^n}\ge  x ] =  \sum_{k=1}^{\infty} (-1)^{k+1}  e^{\lambda_k  x} \mathcal{C}_k(M^n,\alpha,\beta).\eqn
\end{theo}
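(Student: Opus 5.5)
Set $F(x):=\sum_{k\ge1}(-1)^{k+1}e^{\lambda_k x}\mathcal{C}_k(M^n,\alpha,\beta)$ for $x>0$. The plan is to show that $F$ and $\PP[\tau_{M^n}\ge x]$ have the same large moments, deduce that their densities agree, and integrate. This is the same route as the alternative proof of the Corollary in the torus section.

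\emph{Convergence of $F$.} From the Gamma asymptotics in the proof of Proposition \ref{knseries}, $\mathcal{C}_k(M^n,\alpha,\beta)$ grows only polynomially in $k$, of order $k^{2\alpha+2}$ or so. By \eqref{eigenvalue}, $-\lambda_k\sim k^2$. Hence on every half-line $[x_0,\infty)$ with $x_0>0$, the series defining $F$, and all its termwise derivatives, converge absolutely and uniformly, dominated by $\sum_k k^{p}e^{-ck^2x_0}$. So $F$ is smooth on $(0,\infty)$ and decays like $e^{\lambda_1 x}$ at infinity. Put $f(x):=-F'(x)=\sum_{k\ge1}(-1)^{k+1}(-\lambda_k)e^{\lambda_k x}\mathcal{C}_k$.

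\emph{Moment identification.} Fix $m\ge \lceil n/2\rceil+2$. For each $k$, $\int_0^\infty x^{m}(-\lambda_k)e^{\lambda_k x}\,dx = m!\,(-\lambda_k)^{-m}$. Exchanging sum and integral, $\int_0^\infty x^m f(x)\,dx$ equals the right side of Theorem \ref{moment}, i.e.\ $\EE[\tau_{M^n}^m]$. Near $0$ the exchange needs care, because the terms are not summable uniformly down to $x=0$. The factor $x^m$ saves us: the bound $\int_0^\infty x^m k^{p+2}e^{-ck^2x}\,dx\asymp k^{p+2-2(m+1)}$ is summable for this range of $m$. This is exactly the condition used in Proposition \ref{knseries}, and Fubini--Tonelli applies. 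So the measures $\nu_1:=x^{m_0}f(x)\,dx$ and $\nu_2:=x^{m_0}\PP_{\tau_{M^n}}(dx)$, with $m_0=\lceil n/2\rceil+2$, have all moments equal. Note that $\nu_1$ is a priori only a signed measure.

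\emph{Uniqueness and conclusion.} This is the step I expect to be the main obstacle, since a signed measure is not determined by its moments in general. Both measures, however, have exponential tails. For $\nu_2$, $\EE[\tau^k]\le k!\,\EE[\tau]^k$ by the Remark after Proposition \ref{mean-var-harm}, so $\EE[e^{s\tau}]<\infty$ for small $s>0$. For $\nu_1$, $|f(x)|\le Ce^{\lambda_1x}$ for $x\ge1$. Hence the Laplace transforms $\int e^{-zx}\,d\nu_i$ are holomorphic on a half-plane $\mathrm{Re}\,z>-\epsilon$. Their Taylor coefficients at $0$ are the moments, so by analytic continuation the two transforms agree on that half-plane. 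Injectivity of the Laplace transform for finite signed measures (Lemma \ref{lemme-Lap}, as in the torus Corollary) then gives $\nu_1=\nu_2$. Dividing by $x^{m_0}$ on $(0,\infty)$ shows that $\tau_{M^n}$ has density $f$ on $(0,\infty)$. Moreover $\PP[\tau_{M^n}=0]=0$, since $R$ starts at $0$ and needs positive time to reach $\pi$. Integrating $f$ from $x$ to $\infty$ and using $F(\infty)=0$ gives $\PP[\tau_{M^n}\ge x]=F(x)$, which is \eqref{queue}.
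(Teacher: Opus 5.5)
Your argument is correct and is essentially the paper's: both use only the moments of order $\ge \lceil n/2\rceil+2$ from Theorem \ref{moment} to identify the Laplace transform of $\tau_{M^n}$ with a transform of explicit exponentials $e^{\lambda_k x}$, and then invert. The one difference is how the unknown low moments are removed. The paper isolates them in a polynomial $P(\lambda)$ and eliminates it by $m_0$-fold integration in Lemma \ref{lemme-Lap}, whereas you weight the law by $x^{m_0}$ (i.e.\ differentiate the transform $m_0$ times) and then appeal to uniqueness of Laplace transforms of finite signed measures; cite the standard uniqueness theorem there rather than Lemma \ref{lemme-Lap}, since that lemma assumes this uniqueness rather than proving it.
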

\begin{proof}
We compute the Laplace transform of $ \tau_{M^n}$, from Theorem \ref{moment} we have for $  \vert \lambda \vert < 1$:
\bqn{eqphin}
 \phi_n(\lambda) &=& \EE [\exp{(-\lambda \tau_{M^n}})] \notag \\
               &= &  \sum_{m=0}^{\infty} (-\lambda)^m   \frac{\EE_{r}[\tau_{M^n}^{m}]}{m!} \notag \\
               &=&  \sum_{m=0}^{ \lceil \frac{n}{2} \rceil +1 } (-\lambda)^m   \frac{\EE_{r}[\tau_{M^n}^{m}]}{m!} + \sum_{m=  \lceil \frac{n}{2} \rceil +2 }^{\infty} (-\lambda)^m   \frac{\EE_{r}[\tau_{M^n}^{m}]}{m!} \notag \\
               &=& P(\lambda) + \sum_{m=  \lceil \frac{n}{2} \rceil +2 }^{\infty} (-\lambda)^m    \sum_{k=1}^{\infty}     (  \frac{1}{ - \lambda_k}  )^{m} (-1)^{k+1} \mathcal{C}_k(M^n,\alpha,\beta) \notag\\
               &=&  P(\lambda) + \sum_{k=1}^{\infty}     (-1)^{k+1} \mathcal{C}_k(M^n,\alpha,\beta)   \sum_{m=  \lceil \frac{n}{2} \rceil +2 }^{\infty}   (  \frac{\lambda }{  \lambda_k}  )^{m} \notag \\
               &=&  P(\lambda) + \sum_{k=1}^{\infty}     (-1)^{k+1} \mathcal{C}_k(M^n,\alpha,\beta)   (  \frac{\lambda }{  \lambda_k}  )^ {\lceil \frac{n}{2} \rceil +2}   \big(  \frac{1}{1- \frac{\lambda }{  \lambda_k} }\big) \notag \\
               &=&  P(\lambda) + \lambda^ {\lceil \frac{n}{2} \rceil +2} \sum_{k=1}^{\infty}     (-1)^{k+1} \mathcal{C}_k(M^n,\alpha,\beta)   (  \frac{1 }{  \lambda_k}  )^ {\lceil \frac{n}{2} \rceil +2}   \big(  \frac{1}{1- \frac{\lambda }{  \lambda_k} }\big) \notag \\
               &=& P(\lambda) + \lambda^ {\lceil \frac{n}{2} \rceil +2} \sum_{k=1}^{\infty}     (-1)^{k+1} \mathcal{C}_k(M^n,\alpha,\beta)   (  \frac{1 }{  \lambda_k}  )^ {\lceil \frac{n}{2} \rceil +2}   \mathcal{L} \Big( x \to - \lambda_k e^{\lambda_k x}\Big)(\lambda) \notag \\
               &=& P(\lambda) + \lambda^ {\lceil \frac{n}{2} \rceil +2}   \mathcal{L} \Big(\Psi_n \Big)(\lambda) \notag \\
\eqn
where $P$ is a polynomial with degree $ \lceil \frac{n}{2} \rceil+1 $  such that $ P(0)=1$, $ \mathcal{L}$ is the Laplace transform and
$$ \Psi_n(x) =  \sum_{k=1}^{\infty}     (-1)^{k} \mathcal{C}_k(M^n,\alpha,\beta)   (  \frac{1 }{  \lambda_k}  )^ {\lceil \frac{n}{2} \rceil+1 }     e^{\lambda_k x}   . $$
Using Lemma \ref{lemme-Lap} we get for $ x >0$:
$$ \mathbb{P} [ \tau_{M^n}\ge x ] = \sum_{k=1}^{\infty} (-1)^{k+1}  e^{\lambda_k  x} \mathcal{C}_k(M^n,\alpha,\beta).$$

\end{proof}   

\begin{lem}\label{lemme-Lap} If $ \mathcal{L} (\mu) (\lambda) = P(\lambda) + \lambda^m \mathcal{L} (g)(\lambda) $, where $\mu$ is a probability distribution and $ P$ is a polynomial of degree at most $ m -1$, such that $ P(0)=1$ and $ m \in \mathbb{N}^*$ then for $x >0$ we have
$$ \int_x^{\infty} \mu(dy) = -g^{(m-1)} (x). $$
\end{lem}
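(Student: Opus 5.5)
We prove Lemma \ref{lemme-Lap} by identifying Laplace transforms and invoking uniqueness. Let $F(x) := \int_x^{\infty}\mu(dy)$ be the tail function of $\mu$. Integrating by parts (Fubini), for $\lambda>0$,
\[
\int_0^\infty e^{-\lambda x}F(x)\,dx = \frac{1-\mathcal{L}(\mu)(\lambda)}{\lambda},
\]
where we assume $\mu$ carries no atom at $0$ (otherwise we work with $\mu((x,\infty))$, which only differs at countably many points). On the other side, we use the iterated integration-by-parts formula for the Laplace transform of derivatives: if $g$ is $C^{m}$ on $(0,\infty)$ with $g^{(j)}(0^+)$ finite and suitable growth, then
\[
\mathcal{L}(g^{(m-1)})(\lambda) = \lambda^{m-1}\mathcal{L}(g)(\lambda) - \sum_{j=0}^{m-2}\lambda^{m-2-j} g^{(j)}(0^+).
\]

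We then compare the two expressions. The hypothesis gives
\[
\frac{1-\mathcal{L}(\mu)(\lambda)}{\lambda} = \frac{1-P(\lambda)}{\lambda} - \lambda^{m-1}\mathcal{L}(g)(\lambda).
\]
Since $P(0)=1$, the quotient $Q(\lambda):=(1-P(\lambda))/\lambda$ is a polynomial of degree at most $m-2$. Hence the Laplace transform of $F+g^{(m-1)}$ equals $Q(\lambda)-\sum_{j\le m-2}\lambda^{m-2-j}g^{(j)}(0^+)$, which is a polynomial in $\lambda$. The Laplace transform of a locally integrable function on $(0,\infty)$ tends to $0$ as $\lambda\to+\infty$, and the only polynomial with this property is $0$. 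So this polynomial vanishes identically, and by Lerch's uniqueness theorem $F=-g^{(m-1)}$ almost everywhere. Both sides are right-continuous ($F$ as a tail function, $g^{(m-1)}$ as a continuous function), so the equality holds for every $x>0$.

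The main obstacle is justifying the regularity of $g$ that these steps need. Specifically:
\begin{itemize}
\item the derivatives $g^{(j)}$ must exist up to order $m-1$;
\item the boundary values $g^{(j)}(0^+)$ must be finite;
\item the termwise integrations by parts and the Laplace transforms must converge.
\end{itemize}
In our application, $g=\Psi_n=\sum_k c_k\lambda_k^{-(m-1)}e^{\lambda_k x}$ with $\lambda_k\sim -k^2$ and $|c_k|\le Ck^{2\alpha+1}$, and $m=\lceil n/2\rceil+2$. The $j$-th termwise derivative then has coefficients of size $k^{2\alpha+1-2(m-1-j)}$. For $j\le m-2$ these are summable (the exponent is below $-1$), which gives uniform convergence on $[0,\infty)$ and finite $g^{(j)}(0^+)$. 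For $j=m-1$ the series converges uniformly on $[\epsilon,\infty)$ thanks to the factor $e^{\lambda_k\epsilon}$, and termwise differentiation is justified by the Weierstrass test. We state the lemma under these hypotheses: $g\in C^{m-1}(0,\infty)$, $g^{(j)}$ bounded near $0$ for $j\le m-2$, $g^{(m-1)}$ locally integrable at $0$, and all functions of at most exponential growth.

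If those boundary terms are inconvenient, the fallback is to avoid them entirely. Integrate $F$ $(m-1)$ times from infinity, i.e. set $H(x)=\int_x^\infty\frac{(y-x)^{m-2}}{(m-2)!}F(y)\,dy$, and compare directly with $(-1)^{m}g$ through transforms; the same uniqueness argument then yields the claim after differentiating $m-1$ times.
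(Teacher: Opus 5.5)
\textbf{Gap: your argument proves a weaker lemma than the one stated, and the hypotheses you add are the hard part.}

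Your identity $\mathcal{L}(g^{(m-1)})(\lambda)=\lambda^{m-1}\mathcal{L}(g)(\lambda)-\sum_{j\le m-2}\lambda^{m-2-j}g^{(j)}(0^+)$ needs two things:
the boundary values $g^{(j)}(0^+)$ must exist, and $g^{(m-1)}$ must be integrable near $0$.
Nothing in the statement gives this, and your check for $g=\Psi_n$ is incorrect.

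Since $2\alpha+1=n-1$ and $2(m-1)=2\lceil n/2\rceil+2$, the exponent $2\alpha+1-2(m-1-j)$ is below $-1$ only when $m-1-j>n/2$. That means $j=0$, plus $j=1$ when $n$ is odd. For $j=m-2$ the exponent is $n-3$.

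For the sphere, $\mathcal{C}_k$ is of exact order $k^{n-1}$, so no sharper termwise estimate can rescue this. Likewise, for $j=m-1$ the termwise majorant $\sum_k k^{n-1}e^{-k^2x}$ is of order $x^{-n/2}$, which is not integrable at $0$.

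These boundary quantities are finite only because of cancellation in the alternating series, and proving that is essentially the content of the lemma. Under your strengthened hypotheses the remaining steps are fine: the polynomial vanishes because Laplace transforms tend to $0$ at $+\infty$, and Lerch's theorem concludes.

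One small slip: $x\mapsto\mu([x,\infty))$ is left-continuous, not right-continuous. The passage to every $x>0$ still works, because a monotone function equal a.e.\ to a continuous one equals it everywhere.

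\textbf{Missing idea: solve the hypothesis for $g$ instead of differentiating $g$ under the transform.}

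Write $P(\lambda)=\sum_{j=0}^{m-1}a_j\lambda^j$ with $a_0=1$. Set $I^1\mu(x)=\mu([0,x])$ and $I^{i+1}\mu(x)=\int_0^x I^i\mu(y)\,dy$. By Fubini, $\mathcal{L}(I^m\mu)(\lambda)=\mathcal{L}(\mu)(\lambda)/\lambda^m$, and $\lambda^{-(m-j)}$ is the transform of $x^{m-j-1}/(m-j-1)!$. Uniqueness therefore gives
\[
g(x)=I^m\mu(x)-\sum_{j=0}^{m-1}a_j\frac{x^{m-j-1}}{(m-j-1)!},
\]
first almost everywhere, then pointwise on $(0,\infty)$ by the continuity argument above. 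Differentiating $m-1$ times gives $\mu([0,x])-a_0=-\mu((x,\infty))$.

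This is the paper's proof. It needs no a priori information about $g$ near $0$. Instead it produces $g^{(j)}(0^+)=-a_{m-1-j}$, which is exactly why your polynomial vanishes; once you have this, the rest of your argument is redundant.

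Your fallback with $H$ is this same identification carried out from $+\infty$. It additionally needs $\mu$ to have a finite moment of order $m-1$, and $P$ to be the Taylor polynomial of $\mathcal{L}(\mu)$ at $0$. Neither is assumed, and integrating from $0$ avoids both.
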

\begin{proof}
Let  $P(\lambda) = \sum_{n=0}^{m-1} a_n \lambda^n $,  $P(0)=a_0=1$ and
 \bq 
 \mathcal{L} (g)(\lambda) &=&  \frac{ \mathcal{L} (\mu) (\lambda)}{\lambda^m} - \sum_{n=0}^{m-1} \frac{a_n}{ \lambda^{m-n}} \\
 &=&  \mathcal{L} (  \underbrace{\int_0^x dx_1\int_ 0^{x_1}....\int_ 0^.}_{m}  \mu(dy) ) (\lambda)- \sum_{n=0}^{m-1} a_n \mathcal{L} \Big( \frac{x^{m-n-1}H(x)}{(m-n-1)!} \Big)(\lambda) \\
 &=& \mathcal{L}  \Big( \underbrace{\int_0^x dx_1\int_ 0^{x_1}...\int_0^.}_{m}  \mu(dy)     - \sum_{n=0}^{m-1} a_n \frac{x^{m-n-1}H(x)}{(m-n-1)!} \Big)(\lambda) ,\\
 \eq
 where $ H(x) = \un_{[0, \infty[}.$
 Hence $$ g(x) =  \underbrace{\int_0^x dx_1\int_ 0^{x_1}...\int_0^.}_{m}  \mu(dy)     - \sum_{n=0}^{m-1} a_n \frac{x^{m-n-1}H(x)}{(m-n-1)!} ,$$
 after $ m -1$  derivations we get 
 $$ g^{(m-1)}(x) =  \int_0^x \mu(dy) - a_0 = - \int_x^{\infty } \mu(dy)$$
\end{proof}

\begin{rem}
Without using representation theory, it appears that $ {C}_k(\mathbb{S}^n,\alpha,\beta)$ is the dimension of the space of spherical harmonics of degree $k$, hence one can used a result from Cheeger and Yau  Lemma 2.3 in \cite{Cheeger-Yau} see also \cite{Alonso},  which states that the heat kernel is radially decreasing one the sphere. This result is not so easy to proof but is physically reasonable. Hence one can get another analytical proof of Theorem \ref{queue1} using the first definition of the separation together with the series expansion of the heat kernel.
\end{rem}

In the proof of Theorem \ref{queue1}, it appear that first $ \lceil \frac{n}{2} \rceil +1 $ moments of $ \tau_{M^n}$ is contain in $P(\lambda)$ and Lemma \ref{lemme-Lap} says in some sens that these first $ \lceil \frac{n}{2} \rceil +1 $ moments are not necessary  to characterize the law of $ \tau_{M^n}$. In fact this first moments are determined by the moments after $ \lceil \frac{n}{2} \rceil + 2 $. In the next subsection we will compute the mean of $ \tau_{M^n}$ and it's variance as example, technics could be use to compute the others moment. This mean and variance characterize the cutoff  time of the respective family of Brownian motions.  In \cite{Mag-Cou} we compute $ \EE [\tau_{M^n}] $ using different methods according to the geometries of $M^n $, as example Proposition 22, 24, 26 and 30 in \cite{Mag-Cou}. It was intriguing 
to note that this mean have the same form, in what follows we compute  these means in an unified way, and remark that this coincidence is due to the uniqueness of some different Abel type summations see Lemma \ref{Cart}.  
\subsection{Computation of the cutoff time $\EE[\tau_{M^n}] $ and the variance of $\tau_{M^n}$}

Let us compute $\EE[\tau_{M^n}] $, and the variance of $\tau_{M^n} $ when $ M^n \neq  \mathbb{P}^n(\mathbb{R})$.
On one hand we have using Theorem \ref{queue1}
\begin{align}
\EE[\tau_{M^n}]  &= \int_0^{\infty} \mathbb{P} [ \tau_{M^n}\ge x ]dx \notag \\
&= \lim_{\epsilon  \searrow 0} \int_{\epsilon}^{\infty} \mathbb{P} [ \tau_{M^n}\ge x ]dx  \notag\\
&= \lim_{\epsilon  \searrow 0} \int_{\epsilon}^{\infty} \sum_{k=1}^{\infty} (-1)^{k+1}  e^{\lambda_k  x} \mathcal{C}_k(M^n,\alpha,\beta) dx. \label{C1}\\
 \notag
\end{align}
Let $ \delta = \alpha + \beta +1 \in \mathbb{Z}_+  $, and let us write $$\lambda_k  = -k(k+ \delta),  \quad  k \in \mathbb{Z}_+    \quad \text{if}\quad M^n \neq \mathbb{P}^n(\mathbb{R}) $$
and by Theorem \ref{moment}
$$ \mathcal{C}_k(M^n,\alpha,\beta) =  \frac{ (2k + \delta) \Gamma (k+\delta )}{ \Gamma( k +1) \Gamma (\delta + 1 )}  \quad \text{if} \quad M^n \neq \mathbb{P}^n(\mathbb{R}), $$ since $\mathcal{C}_k(M^n,\alpha,\beta) $ is polynomial in $k$ we have in \eqref{C1},
\begin{align}
\EE[\tau_{M^n}]  &= \lim_{\epsilon  \searrow 0}  \sum_{k=1}^{\infty} (-1)^{k+1} \frac{ (2k + \delta) \Gamma (k+\delta )}{ \Gamma( k +1) \Gamma (\delta + 1 )} \int_{\epsilon}^{\infty} e^{ -k(k+ \delta) x} dx \notag \\
&= \lim_{\epsilon  \searrow 0}  \sum_{k=1}^{\infty} (-1)^{k-1} \frac{(2k + \delta)}{k(k+ \delta)}  \frac{  \Gamma (k+\delta )}{ \Gamma( k +1) \Gamma (\delta + 1 )} e^{ -k(k+ \delta) \epsilon} \notag \\
 &= \lim_{\epsilon  \searrow 0}  \sum_{k=1}^{\infty} a_k e^{ -k(k+ \delta) \epsilon} \label{C2},\\
 \notag
\end{align}
where $$ a_k = (-1)^{k-1} \frac{(2k + \delta)}{k(k+ \delta)}  \frac{  \Gamma (k+\delta )}{ \Gamma( k +1) \Gamma (\delta + 1 )} . $$ 
Note that we could no pass the limit inside the sum, nevertheless we know that the limit when $ \epsilon \searrow 0$ exists.  In the terminology of \cite{Hardy} page 72, $\sum a_k$ is summable  $(A, \lambda_k)$ with sum $ \EE[\tau_{M^n}].$

Inspirited by Cartwright Theorem's, specially Theorem II page 82 in \cite{Cartwright} we will look another series. Note that Theorem II \cite{Cartwright} is not directly applicable in our situation.

For $ \vert X \vert < 1 $, let 
 \begin{align}
 F(X) &=  \sum_{k=1}^{\infty} (-X)^{k-1} \frac{(2k + \delta)}{k(k+ \delta)}  \frac{  \Gamma (k+\delta )}{ \Gamma( k +1) \Gamma (\delta + 1 )}  \notag \\
 &=  \sum_{k=1}^{\infty} (-X)^{k-1} \big( \frac{1}{k} + \frac{1}{k+ \delta}  \big)\frac{  \Gamma (k+\delta )}{ \Gamma( k +1) \Gamma (\delta + 1 )}  \notag \\
 &=  \sum_{k=1}^{\infty} (-X)^{k-1} \big(\int_{0}^1 (1 + t^{\delta} ) t^{k-1} dt \big)\frac{  \Gamma (k+\delta )}{ \Gamma( k +1) \Gamma (\delta + 1 )}  \notag \\
  &=  \frac{1}{\delta}  \sum_{k=1}^{\infty} \int_{0}^1 (1 + t^{\delta} ) (-Xt)^{k-1}  \frac{  \Gamma (k+\delta )}{ \Gamma( k +1) \Gamma (\delta )}dt  \notag \\
   &=  \frac{1}{\delta}   \int_{0}^1 \big( \sum_{k=1}^{\infty} (-Xt)^{k-1}  \frac{  \Gamma (k+\delta )}{ \Gamma( k +1) \Gamma (\delta )} \big) (1 + t^{\delta} ) dt \notag  \\
   &= \frac{1}{\delta}   \int_{0}^1 \frac{1- (1 + tX)^{-\delta}  }{tX}  (1 + t^{\delta} ) dt \label{C3},\\ 
    \notag
 \end{align}
where we use dominated convergence Theorem and  $(1+x)^{-\delta} = \frac{1}{\Gamma(\delta)} \sum_{k=0}^{\infty} (-x)^{k}\frac{  \Gamma (k+\delta )}{ \Gamma( k +1) }$ for $ \vert x \vert < 1$ and $\delta >0 $, in the two last equalities. 
Hence, since $\delta \ge 1 $ 
\begin{align}
F(X) &= \frac{1}{\delta}  \Big( \int_{0}^1 \frac{1- (1 + tX)^{-\delta}  }{tX}   dt +   \int_{0}^1 t^{\delta}\frac{1- (1 + tX)^{-\delta}  }{tX}   dt \Big) \notag \\
&= \frac{1}{\delta X }  \Big( \int_{0}^X \frac{1- (1 + Y)^{-\delta}  }{Y}   dY +  \frac{1}{X^{\delta}} \int_{0}^X \big( Y^{\delta - 1 }- \frac{(Y/(1+Y))^{\delta}  }{Y}  \big)  dY \Big) \notag \\
&= \frac{1}{\delta X}  \Big( \int_{0}^X \frac{1- (1 + Y)^{-\delta}  )}{Y}   dY +  \frac{1}{{\delta}}
 - \frac{1}{X^{\delta}} \int_{0}^X  \big( \frac{Y}{1+Y} \big )^{\delta}   \frac{1}{Y}  \  dY \
\Big) \notag \\
 &\longrightarrow_{X \to 1^{-}}  \frac{1}{\delta}  \Big( \underbrace{ \int_{0}^1 \frac{1- (1 + Y)^{-\delta}  )}{Y}   dY +  \frac{1}{{\delta}}}_{A}
 -  \underbrace{\int_{0}^1  \big( \frac{Y}{1+Y} \big )^{\delta}   \frac{1}{Y}  \  dY}_{B}
\Big), \label{C4}\\
\notag
\end{align}
where we use Beppo Levi Theorem in the last line.
Let us compute $A$, recall that $\delta \in \mathbb{Z}_+ $,
\begin{align}
A &=  \int_{0}^1 \frac{1- (1 + Y)^{-\delta}  }{(1-(1+Y)^{-1}  ) (1+Y)}   dY  + \frac{1}{{\delta}}\notag  \\
&=  \int_{0}^1  \sum_{k=1}^{\delta } (1+Y)^{-k}    dY  + \frac{1}{{\delta}} \notag  \\
&= \log( 2) + \sum_{k=1}^{\delta -1} \frac{1}{k} ( 1 - \frac{1}{2^k}) + \frac{1}{{\delta}} \notag \\ 
&= \log( 2) + \sum_{k=1}^{\delta } \frac{1}{k} -  \sum_{k=1}^{\delta -1} \frac{1}{k}\frac{1}{2^k}. \label{C5}\\
\notag
\end{align}
Let us compute $ B$, after a change of variable $ Z = \frac{Y}{1+Y}$,
\begin{align}
B &= \int_0^{1/2} \frac{Z^{\delta -1}}{1-Z} dZ = \int_0^{1/2} \sum_{k=0}^{\infty} Z^{\delta -1 + k } dZ = \sum_{k=\delta}^{\infty}  \frac{1}{k} \frac{1}{2^k}.\notag \\
  \notag \\
\end{align}
Putting \eqref{C4}, \eqref{C5} in \eqref{C3}, we obtain,

$$ \lim_{X \to 1^{-}} F(X)  = \frac{1}{\delta} \sum_{k=1}^{\delta } \frac{1}{k}   . $$

Note that $$ \sum_{k=1}^{\infty} a_k e^{-k\epsilon} = e^{-\epsilon }F(e^{-\epsilon }) $$ and so
\begin{align} \label{C6}
\lim_{\epsilon \searrow 0} \sum_{k=1}^{\infty} a_k e^{-k\epsilon} &= \frac{1}{\delta} \sum_{k=1}^{\delta } \frac{1}{k} \\ \notag
\end{align}
hence  $\sum a_k$ is summable  $(A, \lambda'_k)$ with sum $ \frac{1}{\delta} \sum_{k=1}^{\delta } \frac{1}{k} $, where $ \lambda'_k = k$.

In the following we will show a Cartwright type Theorem's, that we need to identify $\EE[\tau_{M^n}]. $ We will give an alternative proof that contain our situation.

\begin{lem}\label{Cart}
If $\sum a_k$ is summable  $(A, \lambda_k)$ with sum $s1$, and if $\sum a_k$ is summable  $(A, \lambda'_k)$ with sum $s2$ then
 $$ s1 =s2.$$
\end{lem}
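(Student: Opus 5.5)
The plan is to read the statement with the standing hypotheses that are implicit in Hardy's definition of $(A,\lambda_k)$ summability and that hold in our application. These are: $0<\lambda_1<\lambda_2<\cdots\to\infty$ and $0<\lambda'_1<\lambda'_2<\cdots\to\infty$, both indexed by the same $k$ (so $\lambda_k$ and $\lambda'_k$ order the terms in the same way), and the series $f(\epsilon)=\sum a_k e^{-\lambda_k\epsilon}$ and $g(\eta)=\sum a_k e^{-\lambda'_k\eta}$ converge for every $\epsilon,\eta>0$. Without some such hypotheses I do not expect the lemma to hold for completely arbitrary sequences, so the first step is to make these assumptions explicit. In our situation they hold: $\lambda_k=k(k+\delta)$, $\lambda'_k=k$, and $a_k$ has polynomial growth.

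The core of the argument is to interpolate between the two methods with the two-parameter Abel mean
\[
H(\epsilon,\eta)=\sum_{k\ge1} a_k\, e^{-\lambda_k\epsilon}e^{-\lambda'_k\eta},\qquad \epsilon,\eta\ge0,\ (\epsilon,\eta)\neq(0,0).
\]
One has $H(\epsilon,0)=f(\epsilon)\to s1$ and $H(0,\eta)=g(\eta)\to s2$. For fixed $\epsilon>0$, the weights $e^{-\lambda'_k\eta}$ are bounded, decreasing in $k$, and tend to $1$ as $\eta\to0$. An Abel summation by parts against the convergent series $\sum a_k e^{-\lambda_k\epsilon}$ therefore gives continuity of $H(\epsilon,\cdot)$ at $0$, and symmetrically for $H(\cdot,\eta)$.

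The step is to show that $\lim_{(\epsilon,\eta)\to0}H(\epsilon,\eta)$ exists. Then both $s1$ and $s2$ are its restrictions to the axes, which gives $s1=s2$. For this, write $S_k(\epsilon)=\sum_{j\le k}a_je^{-\lambda_j\epsilon}$ and sum by parts:
\[
H(\epsilon,\eta)=\sum_k S_k(\epsilon)\bigl(e^{-\lambda'_k\eta}-e^{-\lambda'_{k+1}\eta}\bigr).
\]
This exhibits $H(\epsilon,\eta)$ as a probability average, in $k$, of the partial sums $S_k(\epsilon)$. The same identity with $\epsilon=0$ replaced by $\eta$-weights in the other variable shows that $f(\epsilon)$ is a probability average of the $S_k(0)$ with weights $e^{-\lambda_k\epsilon}-e^{-\lambda_{k+1}\epsilon}$. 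Using the common ordering of the indices, I will compare the two averages. Concretely, I will show $|H(\epsilon,\eta)-f(\epsilon)|\to0$ uniformly as $\eta\to0$, by bounding the tail $\sum_{k>K}$ with a uniform bound on $\sup_{\epsilon}\sup_k|S_k(\epsilon)|$, and the head $\sum_{k\le K}$ by $1-e^{-\lambda'_K\eta}$.

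The main obstacle is exactly that uniform bound on $S_k(\epsilon)$, i.e. a Tauberian-type boundedness of the $(A,\lambda_k)$-partial means. My first attempt is to obtain it from the polynomial growth of $a_k$ together with the existence of the limit $s1$. The idea is that $S_k(\epsilon)-f(\epsilon)$ is a tail $\sum_{j>k}a_je^{-\lambda_j\epsilon}$, which I will control by a second summation by parts using the monotonicity of $\lambda_j$. If this bound only holds along a regime $\lambda'_K\eta\ll1\ll\lambda_K\epsilon$, I will take $K=K(\epsilon,\eta)$ adapted to both scales and let $(\epsilon,\eta)\to0$ along that regime, which suffices to identify both axis limits with the same number.
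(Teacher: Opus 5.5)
There is a genuine gap, and it is structural. You are right that the lemma cannot hold for arbitrary exponents, but the hypotheses you add are much too weak, and the argument built on them cannot close. Your key step, $\sup_{\epsilon>0}|H(\epsilon,\eta)-f(\epsilon)|\to0$ as $\eta\to0$, only restates the conclusion. For fixed $\eta>0$ you have $H(\epsilon,\eta)\to g(\eta)$ as $\epsilon\to0$, so this supremum is at least $|g(\eta)-s1|$. The tool you propose for it, a bound on $\sup_\epsilon\sup_k|S_k(\epsilon)|$, fails in the application. There $a_k\sim(-1)^{k-1}2k^{\delta-2}/\Gamma(\delta+1)$, so for $\delta\ge3$ the partial sums $S_k(0)=\lim_{\epsilon\to0}S_k(\epsilon)$ are unbounded, of order $k^{\delta-2}$.

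More fundamentally, no argument using only your standing hypotheses (increasing exponents with common indexing, convergent Dirichlet series, polynomially bounded $a_k$) can work, even with that uniform bound added, because the lemma is false at that level of generality. Take $a_1=0$, $a_k=(-1)^k$ for $k\ge2$, and
\[
\lambda_1=1,\qquad \lambda_{2m}=4^m,\qquad \lambda_{2m+1}=4^m\Bigl(1+\frac1m\Bigr)\quad(m\ge1),\qquad \lambda'_k=\lambda_{k+1}.
\]
Abel's inequality gives $0\le S_k(\epsilon)\le1$. Moreover $f(\epsilon)=\sum_{m\ge1}\bigl(e^{-\lambda_{2m}\epsilon}-e^{-\lambda_{2m+1}\epsilon}\bigr)\le\sum_{m\ge1}\frac{4^m\epsilon}{m}e^{-4^m\epsilon}\to0$. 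Telescoping gives $f(\eta)+g(\eta)=e^{-4\eta}$, so $g(\eta)\to1$, and here $s1=0\neq1=s2$. This also shows why your fallback cannot close the argument. In the corner $\lambda'_K\eta\ll1\ll\lambda_K\epsilon$ you get $H\approx f(\epsilon)$, and in the opposite corner $H\approx g(\eta)$. Nothing you propose controls $H$ in between, which is exactly where the two methods weigh the partial sums differently.

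The missing ingredient is a structural link between the two exponent sequences, which is what the paper's proof uses. The lemma should be read for $\lambda_k=k(k+\delta)$, $\lambda'_k=k$ only, which is all that proof covers. Since $(k+\delta/2)^2=\lambda_k+\delta^2/4$, L\'evy's formula for the Brownian hitting-time density gives
\[
e^{-k\epsilon}=e^{\delta\epsilon/2}\int_0^\infty K_\epsilon(t)\,e^{-(k+\delta/2)^2t}\,dt,\qquad K_\epsilon(t)=\frac{\epsilon}{2\sqrt{\pi t^3}}e^{-\epsilon^2/(4t)},
\]
where $K_\epsilon$ is a probability density concentrating at $0$. Hence $\sum_k a_ke^{-k\epsilon}=e^{\delta\epsilon/2}\int_0^\infty K_\epsilon(t)G(t)\,dt$ with $G(t)=e^{-\delta^2t/4}\sum_k a_ke^{-\lambda_kt}$. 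This $G$ is bounded and continuous on $[0,\infty)$ with $G(0)=s1$, so letting $\epsilon\to0$ gives $s2=s1$. The $(A,k)$-means are thus positive averages of the $(A,\lambda_k)$-means, a subordination by the completely monotone function $x\mapsto e^{-\epsilon\sqrt{x}}$. No bound on partial sums is ever needed. The growth of $a_k$ enters only in the Fubini--Tonelli interchange, which is harmless because of the factor $e^{-k\epsilon}$.
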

\begin{proof}
Let us recall the following   L\'evy's formula :
$$ \int_0^{\infty} \frac{a}{\sqrt{2\pi t^3}} e^{-\frac{a^2}{2t}} e^{-\lambda t} dt = e^{-a\sqrt{2 \lambda}}.$$
Using the above  formula we get :
\begin{align}
e^{-k \epsilon} &= \frac{ e^{\frac{\delta}{2} \epsilon}}{2 \sqrt{\pi}} \int_0^{\infty} \frac{\epsilon}{\sqrt{ t^3}} e^{-\frac{\epsilon^2}{4t}} e^{- (k+ \frac{\delta}{2} )^2 t} dt \notag \\
&= e^{\frac{\delta}{2} \epsilon} \int_0^{\infty} K_{\epsilon}(t) e^{- (k+ \frac{\delta}{2} )^2 t} dt \label{C7}\\
\notag
\end{align}
where $K_{\epsilon}(t) = \frac{\epsilon}{2 \sqrt{\pi t^3}} e^{-\frac{\epsilon^2}{4t}} $ is positive,  $  \int_0^{\infty} K_{\epsilon}(t) dt = 1$ and as the density of hitting time of standard Brownian motion at level   $ \epsilon/ \sqrt{2}$ and it concentrates at $ 0$ as $\epsilon$ goes to $0$.
Let $$ G(t) = \sum_{k=1}^{\infty} a_k e^{- (k+ \frac{\delta}{2} )^2 t} = e^{- \frac{\delta^2}{4} t} \sum_{k=1}^{\infty} a_k e^{- k(k + \delta ) t} \underset{ t \searrow 0}{\longrightarrow} s1 .$$
Also since $ G$ is clearly continuous in $ ]0, \infty[ $ and $G(t) \underset{ t \to \infty }{\longrightarrow} 0$ (by uniform convergence), $G$ is extendable by a bounded and continuous function on $ [0, \infty[ $ that we also note  $G$. 

Hence using \eqref{C7},
\begin{align}
    \sum_{k=1}^{\infty} a_k e^{-k\epsilon}  
 &=  e^{\frac{\delta}{2} \epsilon} \sum_{k=1}^{\infty} a_k \int_0^{\infty} K_{\epsilon}(t) e^{- (k+ \frac{\delta}{2} )^2 t} dt \notag \\
 &=  e^{\frac{\delta}{2} \epsilon} \int_0^{\infty} K_{\epsilon}(t)\sum_{k=1}^{\infty} a_k  e^{- (k+ \frac{\delta}{2} )^2 t} dt \notag \\
 &=  e^{\frac{\delta}{2} \epsilon} \int_0^{\infty} K_{\epsilon}(t)G(t) dt  \underset{\epsilon \searrow 0}{\longrightarrow}  G(0) =s1 ,\notag \\
 \notag
\end{align}
where in the second equality we separate even and odd $k$ and we use the positivity of $K_{\epsilon}$ as well as Beppo Levi Theorem for the commutation of sum and integral, in the last equality we use the concentration of $K_{\epsilon} \underset{\epsilon \searrow 0}{\longrightarrow} \delta_0$. The result follows since  by hypothesis
$$ \sum_{k=1}^{\infty} a_k e^{-k\epsilon} \underset{\epsilon \searrow 0}{\longrightarrow}  s2  . $$

\end{proof}

\begin{pro} \label{mean_M}
If $ M^n \neq  \mathbb{P}^n(\mathbb{R})$ then  $\EE[\tau_{M^n}] = \frac{1}{\delta} \sum_{k=1}^{\delta } \frac{1}{k}$, where 
$$ \delta =  \alpha + \beta +1 .$$ 
\end{pro}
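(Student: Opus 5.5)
The plan is to assemble the three ingredients already set up just before the statement. First, by Theorem \ref{queue1} and Tonelli (the tail is nonnegative), $\EE[\tau_{M^n}]=\lim_{\epsilon\searrow0}\int_\epsilon^\infty \PP[\tau_{M^n}\ge x]\,dx$. On $[\epsilon,\infty)$ the series $\sum_k(-1)^{k+1}e^{\lambda_k x}\mathcal{C}_k(M^n,\alpha,\beta)$ converges uniformly, since $\mathcal{C}_k$ grows only polynomially in $k$ (it equals $(2k+\delta)\Gamma(k+\delta)/(k!\,\Gamma(\delta+1))\sim C k^{\delta}$) while $e^{\lambda_k x}\le e^{-k^2\epsilon}$. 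We may therefore integrate term by term. Using $\lambda_k=-k(k+\delta)$ and $\int_\epsilon^\infty e^{-k(k+\delta)x}dx=e^{-k(k+\delta)\epsilon}/(k(k+\delta))$, this gives \eqref{C2}:
$$\EE[\tau_{M^n}]=\lim_{\epsilon\searrow0}\sum_{k\ge1}a_k e^{-k(k+\delta)\epsilon}.$$
The limit exists because the left-hand side is finite; recall $\EE[\tau_{M^n}]=G_n[\un](0)<\infty$. So $\sum a_k$ is $(A,\lambda_k)$-summable with sum $s_1=\EE[\tau_{M^n}]$.

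Second, the computation \eqref{C3}--\eqref{C6} shows that $\sum a_k$ is $(A,k)$-summable. The sum is $s_2=\frac1\delta\sum_{k=1}^\delta\frac1k$, obtained as the Abel limit $X\to1^-$ of the closed form of $F(X)$, with $A$ and $B$ evaluated explicitly. It remains to combine $A-B$: we have $\log 2=\sum_{k\ge1}\frac{1}{k2^k}$, and $\sum_{k\le\delta-1}\frac1{k2^k}+\sum_{k\ge\delta}\frac1{k2^k}=\log2$, so $A-B=\sum_{k=1}^\delta\frac1k$. I would check this bookkeeping once more, including the prefactor $\frac1{\delta X}\to\frac1\delta$.

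Third, Lemma \ref{Cart} gives $s_1=s_2$, which is exactly the claim.

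The main obstacle is the rigor of the interchange in Lemma \ref{Cart}. There, $\sum_k a_k e^{-(k+\delta/2)^2t}$ must be integrated against $K_\epsilon$, and $\sum|a_k|$ diverges, since $|a_k|\sim C k^{\delta-2}$, which is not summable for $\delta\ge1$. The justification I would try first is to group consecutive terms $k=2j-1,2j$. The paired terms have fixed sign for $j$ large, because $|a_k|e^{-(k+\delta/2)^2t}$ is eventually monotone in $k$ for each $t$. Then monotone convergence applies to the paired series, and the finitely many initial terms are handled separately. The boundedness and continuity of $G$ on $[0,\infty)$ follow from $G(t)\to s_1$ as $t\searrow0$, which is the $(A,\lambda_k)$ summability from the first step. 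With that, $\int K_\epsilon G\to G(0)$ by the approximate-identity property of $K_\epsilon$, which concludes the proof.
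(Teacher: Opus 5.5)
Your three steps are the paper's proof: \eqref{C2} gives the $(A,k(k+\delta))$-sum $\EE[\tau_{M^n}]$, \eqref{C3}--\eqref{C6} give the $(A,k)$-sum $\frac1\delta\sum_{k=1}^{\delta}\frac1k$, and Lemma~\ref{Cart} identifies the two. Your check $A-B=\sum_{k=1}^{\delta}\frac1k$ via $\log 2=\sum_{k\ge1}\frac{1}{k2^k}$ is correct.

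The gap is in the one piece you supply yourself: the pairing argument for exchanging $\sum_k$ and $\int K_\epsilon\,dt$ in Lemma~\ref{Cart} does not work. One computes $|a_{k+1}|/|a_k|=1+\frac{\delta-2}{k}+O(k^{-2})$. So for $\delta\ge3$, which covers all large $n$, one has $|a_{2j}|>|a_{2j-1}|$ for large $j$. Then the pair $a_{2j-1}e^{-(2j-1+\delta/2)^2t}+a_{2j}e^{-(2j+\delta/2)^2t}$ is negative for $t$ below a threshold of order $j^{-2}$ and positive above it. Your claim that pairs are eventually positive holds for each fixed $t$, but the index from which it holds tends to $\infty$ as $t\searrow0$. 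No pair has a fixed sign on $(0,\infty)$, so there is no finite set of initial terms to set aside, and monotone convergence does not apply to the paired partial sums.

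The fix is easier than the obstacle you anticipated. The divergence of $\sum_k|a_k|$ is irrelevant, because integrating against $K_\epsilon$ produces exponential damping. By Tonelli and \eqref{C7}, $\int_0^\infty K_\epsilon(t)\sum_k|a_k|e^{-(k+\delta/2)^2t}\,dt=\sum_k|a_k|e^{-(k+\delta/2)\epsilon}<\infty$ for every $\epsilon>0$, since $|a_k|$ grows only polynomially. Fubini then justifies the exchange. This is what the paper's ``separate even and odd $k$ and apply Beppo Levi'' amounts to: each parity subseries has constant sign, and both resulting series are finite.

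The same kind of domination, $\sum_k\mathcal{C}_k\int_\epsilon^\infty e^{-k(k+\delta)x}\,dx<\infty$, is what justifies termwise integration in your first step. Uniform convergence on the unbounded interval $[\epsilon,\infty)$ is not enough by itself. With these two substitutions the rest of your argument goes through: $G$ extends to a bounded continuous function on $[0,\infty)$, and $\int K_\epsilon G\to G(0)$.
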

\begin{proof}
Use \eqref{C2}, \eqref{C6} together with Lemma \ref{Cart}.
\end{proof}

\begin{cor}
We have the following equivalent :
$$\EE[\tau_{\mathbb{S}^n}] \sim_{n \to \infty} \frac{\ln(n)}{n}, $$
$$\EE[\tau_{\mathbb{P}^n(\mathbb{C})}] \sim_{n \to \infty} \frac{2\ln(n)}{n}, $$
and
$$\EE[\tau_{\mathbb{P}^n(\mathbb{H})}] \sim_{n \to \infty} \frac{2\ln(n)}{n}. $$
\end{cor}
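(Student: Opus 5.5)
This follows directly from Proposition \ref{mean_M}, which gives $\EE[\tau_{M^n}] = \frac{1}{\delta}H_\delta$ with $\delta=\alpha+\beta+1$ and $H_\delta=\sum_{k=1}^{\delta}\frac1k$ the harmonic number. The plan is to compute $\delta$ explicitly in each case from \eqref{parameter2} and then apply the classical asymptotic $H_\delta\sim\ln(\delta)$ as $\delta\to\infty$.

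\emph{Computing $\delta$.} Recall $\alpha=\frac{n-2}{2}$ and $\beta=\frac{\rho-1}{2}$.
For $\mathbb{S}^n$ we have $\rho=n-1$, so $\beta=\frac{n-2}{2}$ and $\delta=n-1$.
For $\mathbb{P}^n(\mathbb{C})$ we have $\rho=1$, so $\beta=0$ and $\delta=\frac{n}{2}$.
For $\mathbb{P}^n(\mathbb{H})$ we have $\rho=3$, so $\beta=1$ and $\delta=\frac{n}{2}+1$.
In each case $\delta$ is a positive integer tending to infinity with $n$ (over the admissible dimensions $n$ even or $n\equiv 0 \bmod 4$), which is what Proposition \ref{mean_M} requires.

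\emph{Asymptotics.} Using $H_\delta=\ln\delta+O(1)$, we get $\EE[\tau_{M^n}]=\frac{\ln\delta+O(1)}{\delta}$. For the sphere, $\delta=n-1\sim n$ and $\ln(n-1)\sim\ln n$, so the mean is $\sim \frac{\ln n}{n}$. For the two projective cases, $\delta\sim \frac n2$ and $\ln\delta=\ln n-\ln 2+o(1)\sim\ln n$, so $\frac{1}{\delta}\sim\frac{2}{n}$ and the mean is $\sim\frac{2\ln n}{n}$.

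There is no genuine obstacle here; the only point requiring care is the bookkeeping of $\rho$ (hence $\beta$) in each geometry. Both the exact value of $\delta$ and the exact value of $\beta$ are irrelevant to the leading order, since only $\delta\sim n/a$ matters, where $a=1$ for the sphere and $a=2$ for the projective spaces. The real projective case is excluded, as in Proposition \ref{mean_M}, because its moment formula has a different structure.
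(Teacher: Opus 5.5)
Your proposal is correct and follows the paper's own argument. You substitute $\delta=n-1$, $\frac{n}{2}$, $\frac{n}{2}+1$ into $\EE[\tau_{M^n}]=\frac{1}{\delta}\sum_{k=1}^{\delta}\frac{1}{k}$ from Proposition \ref{mean_M} and use $\sum_{k=1}^{\delta}\frac1k\sim\ln\delta$. The only difference is that you write out the values of $\beta$ and $\delta$ for the projective cases, which the paper dismisses with ``the other case follows in the same way.''
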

\begin{proof}
If $M^n = \mathbb{S}^n $ then $ \delta = n-1$, and the result follows. More precise equivalent could be deduce by   Euler–Mascheroni. The other case follows in the same way.
\end{proof}
\begin{rem}
The preceding Corollary give the leading term $ t^{M^n}_n(c)$ in the next section.
\end{rem}

In a Similar way we could compute the variance of $ \tau_{M^n}$.
We have using  Theorem \ref{queue1} that :
\begin{align}
\EE[\tau_{M^n}^2] &=  2 \int_0^{\infty} x \mathbb{P} [ \tau_{M^n}\ge x ]dx \notag \\
&= 2 \lim_{\epsilon  \searrow 0} \int_{\epsilon}^{\infty} \sum_{k=1}^{\infty} (-1)^{k+1}  xe^{\lambda_k  x} \mathcal{C}_k(M^n,\alpha,\beta) dx. \notag \\
&= 2 \lim_{\epsilon  \searrow 0} \sum_{k=1}^{\infty} (-1)^{k-1}  \Big(  \frac{e^{\lambda_k \epsilon} \epsilon}{ - \lambda_k }  +  \frac{ e^{\lambda_k \epsilon} }{  \lambda_k^2 } \Big) \mathcal{C}_k(M^n,\alpha,\beta)  \notag \\
&= 2 \lim_{\epsilon  \searrow 0} \sum_{k=1}^{\infty} (-1)^{k-1}  \frac{(2k + \delta)}{(k(k+ \delta))^2}  \frac{  \Gamma (k+\delta )}{ \Gamma( k +1) \Gamma (\delta + 1 )}  e^{ -k(k+ \delta) \epsilon}    \notag \\
&= 2 \lim_{\epsilon  \searrow 0} \sum_{k=1}^{\infty} (- e^{ - \epsilon} )^{k-1}  \frac{(2k + \delta)}{(k(k+ \delta))^2}  \frac{  \Gamma (k+\delta )}{ \Gamma( k +1) \Gamma (\delta + 1 )}     \notag \\
&=2 \lim_{X  \nearrow 1} \sum_{k=1}^{\infty} (- X)^{k-1}  \frac{(2k + \delta)}{(k(k+ \delta))^2}  \frac{  \Gamma (k+\delta )}{ \Gamma( k +1) \Gamma (\delta + 1 )}   \notag   \\
&=\frac{2}{\delta} \lim_{X  \nearrow 1} \sum_{k=1}^{\infty} (- X)^{k-1} \Big( \frac{1}{k^2} - \frac{1}{(k+ \delta)^2}   \Big) \frac{  \Gamma (k+\delta )}{ \Gamma( k +1) \Gamma (\delta + 1 )}   \notag   \\
&=\frac{2}{\delta} \lim_{X  \nearrow 1} \sum_{k=1}^{\infty} (- X)^{k-1} \Big( \frac{1}{k} - \frac{1}{k+ \delta}   \Big)  \Big( \frac{1}{k} + \frac{1}{k+ \delta}   \Big)\frac{  \Gamma (k+\delta )}{ \Gamma( k +1) \Gamma (\delta + 1 )}   \notag   \\
&=\frac{2}{\delta} \lim_{X  \nearrow 1} \sum_{k=1}^{\infty} (- X)^{k-1} \Big( \int_0^1 t_1^{k-1} (1-t_1^{\delta}) dt_1 \Big)  \Big( \int_0^1 t_2^{k-1} (1+t_2^{\delta}) dt_2 \Big)\frac{  \Gamma (k+\delta )}{ \Gamma( k +1) \Gamma (\delta + 1 )}   \notag   \\
&=\frac{2}{\delta} \lim_{X  \nearrow 1} \sum_{k=1}^{\infty}  \int_0^1 \int_0^1  (- t_1t_2X)^{k-1}  (1-t_1^{\delta})   (1+t_2^{\delta}) \frac{  \Gamma (k+\delta )}{ \Gamma( k +1) \Gamma (\delta + 1 )}  dt_1 dt_2 \notag   \\
&=\frac{2}{\delta} \lim_{X  \nearrow 1}  \int_0^1 \int_0^1 \sum_{k=1}^{\infty}  (- t_1t_2X)^{k-1}  \frac{  \Gamma (k+\delta )}{ \Gamma( k +1) \Gamma (\delta + 1 )}  (1-t_1^{\delta})   (1+t_2^{\delta}) dt_1 dt_2 \notag   \\
\notag 
\end{align}
where in the fourth equality we use \eqref{C6}, in fifth equality we use Lemma \ref{Cart} (since in what follows we guarantee the existence of the limit of the second series and we compute the limit), and in the last equality we use the uniformity of the convergence for $\vert X \vert <1 $.
From  $(1+x)^{-\delta} = \frac{1}{\Gamma(\delta)} \sum_{k=0}^{\infty} (-x)^{k}\frac{  \Gamma (k+\delta )}{ \Gamma( k +1) }$ for $ \vert x \vert < 1$ and $\delta >0 $, it follows that 
\begin{align}
\EE[\tau_{M^n}^2] &= \frac{2}{\delta^2} \lim_{X  \nearrow 1}  \int_0^1 \int_0^1 (1-t_1^{\delta})  (1+t_2^{\delta})
\frac{1- \frac{1}{(1+ t_1t_2X)^{\delta}}}{t_1t_2 X}    dt_1 dt_2 \notag   \\
&= \frac{2}{\delta^2}  \int_0^1 \int_0^1 (1-t_1^{\delta})  (1+t_2^{\delta})
\frac{1- \frac{1}{(1+ t_1t_2)^{\delta}}}{t_1t_2 }    dt_1 dt_2 \notag   \\
&= \frac{2}{\delta^2}  \int_0^1 \int_0^1 (1-(t_1t_2)^{\delta})  
\frac{1- \frac{1}{(1+ t_1t_2)^{\delta}}}{t_1t_2}    dt_1 dt_2 \notag   \\
&= \frac{2}{\delta^2}  \int_0^1  \frac{1}{t_1}\int_0^{t_1} (1-x^{\delta})  
\frac{1- \frac{1}{(1+ x)^{\delta}}}{x}    dx dt_1 \label{C8}  \\
\notag 
\end{align}
where we use dominated convergence  Theorem in the second equality,  the symmetry in the third equality for the cancellation of the other term and change of variable in the last equality. By direct computation we have :
\begin{align}
\int_0^{t_1} (1-x^{\delta})  \frac{1- \frac{1}{(1+ x)^{\delta}}}{x}    dx  =  \int_0^{t_1}  \frac{1- \frac{1}{(1+ x)^{\delta}}}{x}    dx  - \frac{t_1^{\delta}}{\delta}  + \int_0^{t_1}   \Big( \frac{x}{1+ x} \Big)^{\delta} \frac{1 }{x}  dx,  \notag  \\
\notag 
\end{align}

\begin{align}
 \int_0^{t_1} \frac{1- \frac{1}{(1+ x)^{\delta}}}{x}    dx &= \sum_{k=1}^{\delta}  \int_0^{t_1} \Big( \frac{1}{1+ x} \Big)^{k} dx \notag \\
 &=\ln( 1+t_1) + \sum_{k=1}^{\delta-1} \frac{1}{k} \Big(  1 - \big( \frac{1}{1+ t_1} \big)^{k} \Big) \label{C9}\\ 
 &=  \ln( 1+t_1) - \sum_{k=1}^{\delta-1} \frac{1}{k}  \big( \frac{1}{1+ t_1} \big)^{k}  + H_{\delta -1}, \notag \\
 \notag
\end{align}
where $  H_{\delta -1}= \sum_{k=1}^{\delta-1} \frac{1}{k} $
and after a change of variable
\begin{align}
 \int_0^{t_1}   \Big( \frac{x}{1+ x} \Big)^{\delta} \frac{1}{x} dx &=  \int_{0}^{\frac{t_1}{1+ t_1}} \frac{Z^{\delta - 1}}{1-Z} dZ = \sum_{k=0}^{\infty}   \int_{0}^{\frac{t_1}{1+ t_1}} Z^{\delta - 1 +k } dZ \notag  \\
 &= \sum_{k= \delta }^{\infty}   \frac1k ({\frac{t_1}{1+ t_1}} )^{k }    \label{C10}. \\
 \notag
\end{align}
Since $$ \int_0^{1} \frac{\ln( 1+t_1)}{t_1} dt_1 = \sum_{k=0}^{\infty} (-1)^{k}\frac{1}{(k+1)^2} = \frac{\pi^2}{12}, $$
together with \eqref{C8},\eqref{C9}, and \eqref{C10}, it follows that
\begin{align}
\EE[\tau_{M^n}^2] &= \frac{2}{\delta^2} \Big(\frac{\pi^2}{12} + \sum_{k=1}^{\delta-1} \frac{1}{k} \Big( \ln(2) + \sum_{m=1}^{k-1} \frac{1}{m} ( 1 - \frac{1}{2^m}   ) \Big) - \frac{1}{\delta^2} +  \sum_{k= \delta }^{\infty}   \frac1k \sum_{m= k }^{\infty} \frac1m {\frac{1}{2^m}}   \Big) \notag \\
&= \frac{2}{\delta^2} \Big(\frac{\pi^2}{12} + \sum_{k=1}^{\delta-1} \frac{1}{k} \sum_{m=1}^{k-1} \frac{1}{m} +    \sum_{k=1}^{\delta-1} \frac{1}{k}       \sum_{m=k}^{ \infty} \frac{1}{m} \frac{1}{2^m}  - \frac{1}{\delta^2} +  \sum_{k= \delta }^{\infty}   \frac1k \sum_{m= k }^{\infty} \frac1m {\frac{1}{2^m}}   \Big) \notag \\
&=  \frac{2}{\delta^2} \Big(\frac{\pi^2}{12} + \sum_{k=1}^{\delta-1} \frac{1}{k} \sum_{m=1}^{k-1} \frac{1}{m}     - \frac{1}{\delta^2} +  \sum_{k= 1 }^{\infty}   \frac1k \sum_{m= k }^{\infty} \frac1m {\frac{1}{2^m}}   \Big) \notag \\
&=  \frac{2}{\delta^2} \Big(\frac{\pi^2}{12} + \sum_{k=1}^{\delta-1} \frac{1}{k} \sum_{m=1}^{k-1} \frac{1}{m}     - \frac{1}{\delta^2} +  \sum_{m= 1 }^{\infty} \frac1m {\frac{1}{2^m}} H_m   \Big) \notag \\
&= \frac{2}{\delta^2} \Big(\frac{\pi^2}{6} + \sum_{k=1}^{\delta-1} \frac{1}{k} \sum_{m=1}^{k-1} \frac{1}{m}     - \frac{1}{\delta^2}  \Big) \label{C11} \\
\notag
\end{align}
where in the second equality we use $\ln(2)= -\ln(1/2) = \sum_{m=1}^{\infty} \frac1m \frac{1}{2^m} $, and in the fourth  one we use Fubini Theorem and in the last one
 
\begin{align}
\sum_{m= 1 }^{\infty} \frac1m {\frac{1}{2^m}} H_m &= \sum_{m= 1 }^{\infty} \frac1m {\frac{1}{2^m}} \int_0^1 \frac{1-t^m}{1-t}dt \notag \\
&= \int_0^1 \frac{\ln(2)+ \ln(1- \frac{t}{2})}{1-t}dt \notag \\
&= \int_0^1 \frac{\ln(2 - t )}{1-t}dt = \int_0^1 \frac{\ln(1+x )}{x}dx 
= \frac{\pi^2}{12} \notag \\
\notag
\end{align}

We are now ready to compute the variance of $ \tau_{M^n}$ in an unified manner, note that in \cite{Mag-Cou} using different  stationary phase  method, we where only able to compute the equivalent up to some constant e.g. Proposition   22, 24, and 26 in \cite{Mag-Cou}.

\begin{pro}\label{var_M}
If $ M^n \neq  \mathbb{P}^n(\mathbb{R})$ then  
\begin{align*}
Var[\tau_{M^n}] &= \frac{1}{\delta^2} \Big(\frac{\pi^2}{3}      - \sum_{k=1}^{\delta} \frac{1}{k^2} - \frac{2}{\delta} \sum_{k=1}^{\delta} \frac{1}{k}   \Big) \notag \\
&=\frac{\pi^2}{6\delta^2} -\frac{2\ln(\delta)}{\delta^3} + \frac{1-2\gamma_e}{ \delta^3} + o(\frac{1}{\delta^3}) \notag \\
& \sim \frac{\pi^2}{6\delta^2} \\
\end{align*}
 where $ \gamma_e$ is the Euler constant and
$$ \delta =  \alpha + \beta +1 .$$ 
\end{pro}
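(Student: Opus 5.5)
The plan is to combine the closed form \eqref{C11} for $\EE[\tau_{M^n}^2]$ with Proposition \ref{mean_M}, which gives $\EE[\tau_{M^n}]=\frac{1}{\delta}H_\delta$ with $H_\delta=\sum_{k=1}^{\delta}\frac1k$. We then simplify $\EE[\tau_{M^n}^2]-\EE[\tau_{M^n}]^2$ and finish with standard harmonic-number asymptotics.

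\textbf{Step 1: exact formula.} First rewrite the double harmonic sum in \eqref{C11}. Since $\sum_{k=1}^{\delta-1}\frac1k\sum_{m=1}^{k-1}\frac1m=\sum_{1\le m<k\le \delta-1}\frac{1}{mk}$, the classical identity $2\sum_{m<k\le N}\frac{1}{mk}=H_N^2-H_N^{(2)}$ (with $H_N^{(2)}=\sum_{k\le N}k^{-2}$) gives, for $N=\delta-1$,
$$\EE[\tau_{M^n}^2]=\frac{1}{\delta^2}\Big(\frac{\pi^2}{3}+H_{\delta-1}^2-H_{\delta-1}^{(2)}-\frac{2}{\delta^2}\Big).$$
Next substitute $H_{\delta-1}=H_\delta-\frac1\delta$ and $H_{\delta-1}^{(2)}=H_\delta^{(2)}-\frac1{\delta^2}$. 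Then
$$H_{\delta-1}^2-H_{\delta-1}^{(2)}-\frac{2}{\delta^2}=H_\delta^2-\frac{2H_\delta}{\delta}-H_\delta^{(2)}.$$
Subtracting $\EE[\tau_{M^n}]^2=H_\delta^2/\delta^2$ cancels the $H_\delta^2$ term and yields the first displayed equality. The only real check here is the bookkeeping of the $\frac1{\delta^2}$ terms, which I would verify on $\delta=1$ directly from \eqref{C11}.

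\textbf{Step 2: asymptotics.} Use $H_\delta^{(2)}=\frac{\pi^2}{6}-\frac1\delta+O(\delta^{-2})$ and $H_\delta=\ln\delta+\gamma_e+O(\delta^{-1})$. This gives
$$\frac{\pi^2}{3}-H_\delta^{(2)}-\frac{2H_\delta}{\delta}=\frac{\pi^2}{6}+\frac{1-2\ln\delta-2\gamma_e}{\delta}+O\Big(\frac{\ln \delta}{\delta^{2}}\Big).$$
Dividing by $\delta^2$ gives the expansion in the statement, and the equivalent $\sim\frac{\pi^2}{6\delta^2}$ follows at once.

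\textbf{Main obstacle.} Step 2 is routine. The real weight sits in \eqref{C11}, which is already established above. Within this proposal the only delicate point is the algebra of Step 1, in particular not mishandling the $-\frac{1}{\delta^2}$ term inside the bracket of \eqref{C11}. I would therefore sanity-check the final formula on $\delta=1$ against a direct evaluation of $\EE[\tau_{M^n}]$ and $\EE[\tau_{M^n}^2]$ from the series of Theorem \ref{queue1}.
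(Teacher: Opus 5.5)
Your proposal is correct and follows essentially the paper's own proof: combine \eqref{C11} with Proposition \ref{mean_M}, collapse the double harmonic sum via $2\sum_{m<k\le N}\frac{1}{mk}=H_N^2-H_N^{(2)}$ (the paper uses this identity implicitly when it expands $(H_{\delta-1}+\frac1\delta)^2$), and finish with the standard expansions of $H_\delta$ and $\sum_{k\le\delta}k^{-2}$. Your handling of the $-\frac{2}{\delta^2}$ term is right, and the $\delta=1$ check gives $Var[\tau_{M^n}]=\frac{\pi^2}{3}-3$ from both \eqref{C11} and the stated formula.
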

\begin{proof}
Using Proposition \ref{mean_M} and \eqref{C11} we have 
\begin{align*}
Var[\tau_{M^n}] &= \EE[\tau_{M^n}^2]- \EE[\tau_{M^n}]^2 \\
&= \frac{1}{\delta^2} \Big(\frac{\pi^2}{3} + 2\sum_{k=1}^{\delta-1} \frac{1}{k} \sum_{m=1}^{k-1} \frac{1}{m}     - \frac{2}{\delta^2}  - (\sum_{k=1}^{\delta} \frac1k ) ^2 \Big)\\
&= \frac{1}{\delta^2} \Big(\frac{\pi^2}{3} + 2\sum_{k=1}^{\delta-1} \frac{1}{k} \sum_{m=1}^{k-1} \frac{1}{m}     - \frac{2}{\delta^2}  - (\sum_{k=1}^{\delta-1} \frac1k + \frac{1}{\delta}) ^2 \Big)\\
&= \frac{1}{\delta^2} \Big(\frac{\pi^2}{3}    - \frac{2}{\delta^2}  - \sum_{k=1}^{\delta} \frac{1}{k^2} - \frac{2H_{\delta-1}}{\delta}\Big)\\
&= \frac{1}{\delta^2} \Big(\frac{\pi^2}{3}      - \sum_{k=1}^{\delta} \frac{1}{k^2} - \frac{2H_{\delta}}{\delta}\Big)\\
\end{align*}
Since 
$$  \sum_{k=1}^{\delta} \frac{1}{k^2} = \frac{\pi^2}{6} - \frac{1}{\delta} + o(\frac{1}{\delta}) , $$
and 
$$ \frac{H_{\delta}}{\delta} = \frac{\ln(\delta)}{\delta} + \frac{\gamma_e}{\delta}+ o(\frac{1}{\delta}) ,$$
where $ \gamma_e$ is the Euler constant, it follows that :
$$ Var[\tau_{M^n}] = \frac{\pi^2}{6\delta^2} -\frac{2\ln(\delta)}{\delta^3} + \frac{1-2\gamma_e}{ \delta^3} + o(\frac{1}{\delta^3}).$$

\end{proof}
\begin{rem}\label{Remunif}
All the asymptotic development could be known using the existing results for $H_{\delta}$ and $\sum_{k=1}^{\delta} \frac{1}{k^2} $. 
Moreover Propositions \ref{mean_M} and \ref{var_M} show that $ Var(\tau_{M^n}) = o( (\EE[\tau_{M^n} ]^2)$, which is enough to shows the cutoff see \cite{Mag-Cou}, and justify that  at time $ t^{M^n}_n(c) = \EE[\tau_{M^n}] + \frac{c}{n}  $ something could happen. 
\end{rem}

\subsection{The profile function for spheres and projective spaces}

In the following section we will compute the limiting profile as well as optimal windows for cutoff in separation for family of Brownian motion in sphere, and projective spaces.

\begin{theo}  \label{profile-S} For $M^n =  \mathbb{S}^n$, the separation discrepancy to equilibrium for the Brownian motion has the following asymptotic profile: for all $c \in \mathbb{R} $  we have  
$$\lim_{n \to \infty}  \fs(\cL(X_n( \frac{\ln(n)}{n} + \frac{c}{n}  )),\cU_{\mathbb{S}^n})  =    1- e^{- e^{-c}},$$
and the windows of the cutoff sequence  $(\frac{\ln(n)}{n},\frac{1}{n})$ is strongly optimal in the sens of definition \ref{def2}. Moreover the above convergence is uniform on all compact.
\end{theo}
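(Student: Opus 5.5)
The plan is to combine the sharpness of the strong stationary time with the explicit tail formula of Theorem \ref{queue1}, and then pass to the limit term by term. By the Remark following Proposition \ref{sep}, $\fs(\cL(X_n(t)),\cU_{\mathbb{S}^n})=\PP[\tau_{\mathbb{S}^n}>t]$, and since $\tau_{\mathbb{S}^n}$ has a density the strict and non-strict tails coincide. For $\mathbb{S}^n$ we have $\delta=\alpha+\beta+1=n-1$ and $\lambda_k=-k(k+n-1)$. Theorem \ref{moment} gives
$$\mathcal{C}_k=\frac{(2k+n-1)\Gamma(k+n-1)}{k!\,\Gamma(n)}.$$
Setting $x_n=x_n(c)=\frac{\ln n+c}{n}$, Theorem \ref{queue1} then yields
\begin{align*}
\fs\big(\cL(X_n(x_n)),\cU_{\mathbb{S}^n}\big)=\sum_{k\ge1}(-1)^{k+1}\,\mathcal{C}_k\,e^{-k(k+n-1)x_n}.
\end{align*}
The target $1-e^{-e^{-c}}=\sum_{k\ge1}(-1)^{k+1}e^{-kc}/k!$, so it suffices to show two things: each term converges to $(-1)^{k+1}e^{-kc}/k!$, and the terms are dominated by a summable sequence uniformly for $n$ large and $c$ in a compact set.

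For the pointwise limit at fixed $k$, write $\Gamma(k+n-1)/\Gamma(n)=(n)(n+1)\cdots(n+k-2)\sim n^{k-1}$ and $(2k+n-1)\sim n$, so that $\mathcal{C}_k\sim n^k/k!$. Next, $e^{-k(n-1)x_n}=n^{-k}e^{-kc}\,e^{k(\ln n+c)/n}\sim n^{-k}e^{-kc}$, while $e^{-k^2x_n}\to1$. The product therefore tends to $e^{-kc}/k!$, uniformly for $c$ in a compact set.

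The domination step is the main obstacle; the crucial point is that the factor $e^{-k^2x_n}$ must absorb the growth coming from $(1+k/n)^k$. I would use three estimates:
\begin{align*}
\mathcal{C}_k&\le\frac{2k+n-1}{n-1}\binom{k+n-2}{k}\le(1+2k)\frac{(n+k)^k}{k!}\le(1+2k)\frac{n^k e^{k^2/n}}{k!},\\
e^{-k(n-1)x_n}&\le n^{-k}e^{k|c|}\,e^{k(\ln n+|c|)/n},\\
e^{-k^2x_n}&=e^{-k^2(\ln n+c)/n}.
\end{align*}
For $n$ large (depending only on the compact set for $c$), $\ln n+c\ge2$. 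Then $e^{k^2/n}e^{-k^2(\ln n+c)/n}\le e^{-k^2/n}\le1$, and $e^{k(\ln n+|c|)/n}\le e^{k}$. Hence the $k$-th term is bounded by $(1+2k)e^{k(|c|+1)}/k!$, which is summable and uniform in $c$ on compacts. Dominated convergence gives the limit, and uniformity on compact sets follows because the termwise limits are uniform and the dominating series has uniformly small tails.

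Finally, for strong optimality in the sense of Definition \ref{def2}, note that $a_n=\ln n/n$ and $b_n=1/n=o(a_n)$. The limit profile $c\mapsto1-e^{-e^{-c}}$ takes values in $(0,1)$ for every finite $c$, so for each $r>0$ both $\lim\fs$ at $a_n+rb_n$ and at $a_n-rb_n$ exist and lie strictly inside $(0,1)$. Moreover the value at $-r$ exceeds the value at $r$, which gives the required chain of inequalities. As $r\to\infty$ the profile tends to $0$ at $+r$ and to $1$ at $-r$, which gives the cutoff conditions of Definition \ref{def2}.
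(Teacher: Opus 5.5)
Your proposal is correct and follows essentially the paper's route: you use the sharpness of the strong stationary time to turn separation into the tail series of Theorem \ref{queue1}, show that each term tends to $(-1)^{k+1}e^{-kc}/k!$, and apply dominated convergence with a summable bound that is uniform for $c$ in a compact (your $(1+2k)e^{k(|c|+1)}/k!$ plays the role of the paper's $2e^{k(1-c)}/(k-1)!$). Where you differ, it is only in presentation: you get uniformity on compacts from uniform termwise limits plus uniform domination instead of the paper's explicit estimate \eqref{UNIF1}, and you spell out strong optimality from the monotonicity and range of $c\mapsto 1-e^{-e^{-c}}$, which the paper only asserts.
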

\begin{proof}

Let $M^n =  \mathbb{S}^n   $, it follows from asymptotic study of the two first moments of $ \tau_{M^n}$ in \cite{Mag-Cou} that their are a cutoff phenomenon with cutoff time $ \frac{\ln(n)}{n}$ with  windows $ \frac{1}{n}$. 
So let $c \in \mathbb{R} $, and let $ t^{\mathbb{S}}_n(c) = \frac{\ln(n)}{n} + \frac{c}{n} $.

We are interested in the profile of the cutoff, hence we are interested in the asymptotic behavior in $n$ of 
$$ \fs(\cL(X_n(  t^{\mathbb{S}}_n(c) )),\cU_{M^n}) =  \mathbb{P} [ \tau_{M^n}\ge t^{\mathbb{S}}_n(c)  ]  .$$
We then want to pass to a limit in $n$ in \eqref{queue} when $ x =t^{\mathbb{S}}_n(c) $.

From  \eqref{queue},  let 
\begin{align*} J^{\mathbb{S}}_k(n) &=   (-1)^{k+1}  e^{\lambda_k  t^{\mathbb{S}}_n(c) } \mathcal{C}_k(\mathbb{S}^n,\alpha,\beta) . \\
\end{align*} 
Recall that for the sphere case $\alpha = \frac{n-2}{2} = \beta $ and so  $ \lambda_k = -k(k + n -1 ) $,  from  the definition of $\mathcal{C}_k(M^n,\alpha,\beta)$ in Theorem \ref{moment}, and for $ n  $ large enough such that 
$ \frac{\ln(n)}{n} + \frac{c}{n} \le 1 $
\begin{align} 
\vert J^{\mathbb{S}}_k(n) \vert &=   e^{ -k(k + n -1 ) ( \frac{\ln(n)}{n} + \frac{c}{n} )   }  \frac{ (2k + n- 1  ) \Gamma (k+ n - 1 )}{ k ! \Gamma ( n )}  \notag \\
&\le e^k  e^{ -k(k + n) ( \frac{\ln(n)}{n} + \frac{c}{n} )   }  \frac{ (2k) ( n- 1  ) \Gamma (k+ n - 1 )}{ k ! \Gamma ( n )} \notag  \\
&= 2e^{k}  e^{ -k(k + n) ( \frac{\ln(n)}{n} + \frac{c}{n} )   }  \frac{  \Gamma (k+ n - 1 )}{ (k-1) ! \Gamma ( n -1 )} \notag \\
&= \frac{2e^{k -kc}}{(k-1) !}   e^{ -k^2 ( \frac{\ln(n)}{n} + \frac{c}{n} )   } \frac{1}{n^k}  \frac{  \Gamma (k+ n - 1 )}{  \Gamma ( n -1 )} \label{unifn} \\
&= \frac{2e^{k -kc}}{(k-1) !}   e^{ -k^2 ( \frac{\ln(n)}{n} + \frac{c}{n} )   } \frac{(k+n-2 ) ...(n-1 )}{n^k}  \notag \\
&= \frac{2e^{k -kc}}{(k-1) !}   e^{ -k^2 ( \frac{\ln(n)}{n} + \frac{c}{n} )   } 
(1+ \frac{k-2}{n} )...(1- \frac{1}{n} ) \notag  \\
&\le \frac{2e^{k -kc}}{(k-1) !}   e^{ -k^2 ( \frac{\ln(n)}{n} + \frac{c}{n} )   } 
(1+ \frac{k-2}{n} )...(1+ \frac{1}{n})  \notag  \\
&= \frac{2e^{k -kc}}{(k-1) !}   e^{ -k^2 ( \frac{\ln(n)}{n} + \frac{c}{n} )   } 
e^{\sum_{j=1}^{k-2} \ln( 1 + \frac{j}{n})}  \notag  \\
&\le \frac{2e^{k -kc}}{(k-1) !}   e^{ -k^2 ( \frac{\ln(n)}{n} + \frac{c}{n} -\frac{1}{2n})   }   \quad, \text{ since }  \forall x\ge 0 \quad \ln(1+x)\le x \notag  \\
& \le \frac{2e^{k(1-c)}}{(k-1) !} \quad, \text{ for n large enough},   \notag  \\
\notag 
\end{align}
hence $J^{\mathbb{S}}_k(n) $ is uniformly bounded by an integrable series. 
Also for fixed $k$,
\begin{align} 
\vert J^{\mathbb{S}}_k(n) \vert & \sim_{n \to \infty } \frac{1}{k!}  e^{ -k( \ln(n) +c  )   }  \frac{  \Gamma (k+ n - 1 )}{  \Gamma ( n-1 )}  \notag \\
&\sim_{n \to \infty } \frac{e^{ -kc }}{k!}    \frac{  \Gamma (k+ n - 1 )}{ n^k \Gamma ( n-1 )}  \notag \\
&\sim_{n \to \infty } \frac{e^{ -kc }}{k!}    \frac{ (k+ n - 1 )^{(k+ n - 1 - \frac12)} e^{-(k+ n - 1 )} }   { n^k ( n-1 )^{ n-1 -\frac12} e^{-(n - 1 )}}  \notag\\
&\sim_{n \to \infty } \frac{e^{ -kc }}{k!}    \label{limn}  .\\ \notag
\end{align}
It follows from \eqref{unifn}, \eqref{limn} and  Lebesgue's dominated convergence theorem that
\begin{align*}
\lim_{n \to \infty}  \fs(\cL(X_n(  t^{\mathbb{S}}_n(c) )),\cU_{M^n})  &= \lim_{n \to \infty}  \mathbb{P} [ \tau_{M^n} \ge t^{\mathbb{S}}_n(c)  ] \\
&=  \lim_{n \to \infty}    \sum_{k=1}^{\infty} (-1)^{k+1}  e^{\lambda_k  t^{\mathbb{S}}_n(c)} \mathcal{C}_k(M^n,\alpha,\beta)\\
&=  \sum_{k=1}^{\infty} (-1)^{k+1} \frac{e^{ -kc }}{k!}\\
&= 1- e^{- e^{-c}}.\\
\end{align*}
The above computation also shows that the windows $ \frac1n$ is in fact strongly optimal in the sens of definition \ref{def2}.
For the uniformity in all compact, let $c \in [-A,A]$,  
\begin{align}
 \vert J^{\mathbb{S}}_k(n) - (-1)^{k+1} \frac{e^{ -kc }}{k!}\vert &=  \vert e^{ -k(k + n -1 ) ( \frac{\ln(n)}{n} + \frac{c}{n} )   }  \frac{ (2k + n- 1  ) \Gamma (k+ n - 1 )}{ k ! \Gamma ( n )}   -  \frac{e^{ -kc }}{k!} \vert  \notag \\
 &=  \frac{e^{ -kc }}{k!} \vert  e^{ -k(k -1 ) ( \frac{\ln(n)}{n} + \frac{c}{n} )  } \frac{ (2k + n- 1  ) \Gamma (k+ n - 1 )}{ n^k \Gamma ( n )} -1  \vert. \label{UNI}\\
 \notag
\end{align}
Since  for all $x\ge 0$ 
$$ x- \frac{x^2}{2} \le \ln(1+x) \le x, $$
we have as in \eqref{unifn} that for $k\ge 2$:   $$ \frac{ (2k + n- 1  ) \Gamma (k+ n - 1 )}{ n^k \Gamma ( n )} =  \frac{(n+ 2k -1 ) (n+k-2) ...(n)}{n^k} \le e^{\frac{k(k-1)}{2n} +\frac{k}{n}} \le   e^{k(k-1)\frac{ 3}{2n}} ,$$
and
$$ \frac{ (2k + n- 1  ) \Gamma (k+ n - 1 )}{ n^k \Gamma ( n )} \ge  \frac{(n+ k -1 ) (n+k-2) ...(n)}{n^k} \ge e^{\frac{k(k-1)}{2n} -\frac{1}{2n^2}\sum_{j=1}^{k-1} j^2} \ge e^{\frac{k(k-1)}{2n} -\frac{1}{2n^2}(k-1)^3}.$$
It follows that for $ k\ge 2$  and for $ n$ large enough and  uniformly in $ c \in [-A,A],$ (such that $ \frac{\ln(n)}{n} + \frac{c}{n}  - \frac{3}{2n}  \ge  \frac{\ln(n)}{n} - \frac{A}{n}  - \frac{3}{2n} \ge 0 $)
$$ e^{ -k(k -1 ) ( \frac{\ln(n)}{n} + \frac{c}{n} )  } \frac{ (2k + n- 1  ) \Gamma (k+ n - 1 )}{ n^k \Gamma ( n )} \le e^{ -k(k -1 ) ( \frac{\ln(n)}{n} + \frac{c}{n}  - \frac{3}{2n})  }  \le 1 $$ 
and so since for $x\ge 0 $, $ \vert e^{-x} -1  \vert \le x $ 
\begin{align}
 \vert  e^{ -k(k -1 ) ( \frac{\ln(n)}{n} + \frac{c}{n} )  } \frac{ (2k + n- 1  ) \Gamma (k+ n - 1 )}{ n^k \Gamma ( n )} -1  \vert & \le  \vert e^{ -k(k -1 ) ( \frac{\ln(n)}{n} + \frac{c}{n} - \frac{1}{2n})-  \frac{1}{2n^2}(k-1)^3 } -1 \vert \notag \\
 & \le k(k -1 ) ( \frac{\ln(n)}{n} + \frac{c}{n} - \frac{1}{2n}) +  \frac{1}{2n^2}(k-1)^3 \notag \\
 & \le k(k -1 ) ( \frac{\ln(n)}{n} + \frac{A}{n} - \frac{1}{2n}) +  \frac{1}{2n^2}(k-1)^3 . \label{UNIF1}\\
 \notag
\end{align}
Since for  $k=1$, $ \vert J^{\mathbb{S}}_1(n) -  \frac{e^{ -c }}{1!}\vert \le \frac{e^{ -c }}{1!} \vert   \frac{n+1}{n} - 1\vert ,$ the uniform convergence on all compact follows from \eqref{UNI} and \eqref{UNIF1}.

\end{proof}   

\begin{rem}
The above computation shows that we have the following convergence in law :
\begin{align*}
  n {\tau_{\mathbb{S}^n} } -  \ln(n)\underset{n \to +\infty}{\overset{\mathcal{L}}{\longrightarrow}} Gumbel \big(0,1\big)
\end{align*}
where $Gumbel $ is the  Gumbel distribution.
\end{rem}

\begin{theo} \label{profile-P}
For $M^n =   \mathbb{P}^n(\mathbb{C}), \mathbb{P}^n(\mathbb{H}) \text{ or }  \mathbb{P}^n(\mathbb{R}) $, the separation discrepancy to equilibrium for the Brownian motion has the following asymptotic profile: for all $c \in \mathbb{R} $  we have  
$$\lim_{n \to \infty}  \fs(\cL(X_n( \frac{2 \ln(n)}{n} + \frac{c}{n} ) ,\cU_{M^n})  =    1- e^{- \frac{e^{-\frac{c}{2}}}{2}},$$
and the windows of the cutoff sequence  $(\frac{2\ln(n)}{n},\frac{1}{n})$ is strongly optimal in the sens of definition \ref{def2}. Moreover the above convergence is uniform on all compact.
\end{theo}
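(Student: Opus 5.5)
We follow the proof of Theorem \ref{profile-S}. By the sharpness remark after Proposition \ref{sep}, the separation equals $\PP[\tau_{M^n}\ge t_n(c)]$ with $t_n(c)=\frac{2\ln(n)}{n}+\frac{c}{n}$. Theorem \ref{queue1} then gives
$$\fs(\cL(X_n(t_n(c))),\cU_{M^n})=\sum_{k\ge1}J_k(n),\qquad J_k(n):=(-1)^{k+1}e^{\lambda_k t_n(c)}\,\mathcal{C}_k(M^n,\alpha,\beta).$$
We will show two things. First, $J_k(n)\to(-1)^{k+1}\frac{x_c^k}{k!}$ for each fixed $k$, where $x_c=\frac{e^{-c/2}}{2}$. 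Second, $|J_k(n)|$ is bounded, uniformly in $n$ large and in $c$ in compacts, by a summable sequence. Dominated convergence then gives the limit $\sum_k(-1)^{k+1}x_c^k/k!=1-e^{-x_c}$. This limit lies strictly between $0$ and $1$ for every $c$ and tends to $1$ (resp. $0$) as $c\to-\infty$ (resp. $+\infty$), so strong optimality of the window $\frac1n$ follows exactly as in the sphere case.

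\textbf{Pointwise limits, case by case.} For $\mathbb{P}^n(\mathbb{C})$ and $\mathbb{P}^n(\mathbb{H})$ we have $\delta=\alpha+\beta+1=\frac n2+\beta$ with $\beta\in\{0,1\}$, and $\lambda_k=-k(k+\delta)$. Hence
$$e^{\lambda_k t_n(c)}=e^{-k\delta t_n(c)}e^{-k^2t_n(c)}=n^{-k}e^{-kc/2}\,(1+o(1)),$$
since $k\beta t_n(c)\to0$ and $k^2 t_n(c)\to0$. On the other side,
$$\mathcal{C}_k=\frac{(2k+\delta)\Gamma(k+\delta)}{k!\,\Gamma(\delta+1)}\sim\frac{\delta^k}{k!}\sim\frac{(n/2)^k}{k!}.$$
The product is therefore $\sim x_c^k/k!$.

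For $\mathbb{P}^n(\mathbb{R})$ we have $\lambda_k=-k(k+\frac{n-1}{2})$, so $e^{\lambda_k t_n(c)}\sim n^{-k}e^{-kc/2}$. Using $2\alpha=n-2$, the constant is
$$\mathcal{C}_k=\frac{(4k+n-1)}{(n-1)}\cdot\frac{\Gamma(2k+n-1)}{\Gamma(n-1)}\cdot\frac{\Gamma(n/2)}{4^k k!\,\Gamma(k+n/2)}\sim\frac{n^{2k}}{4^k k!\,(n/2)^k}=\frac{(n/2)^k}{k!}.$$
This gives the same limit $x_c^k/k!$.

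\textbf{Domination and uniformity.} We write each Gamma ratio as a finite product, for instance $\Gamma(k+\delta)/\Gamma(\delta)=\prod_{j=0}^{k-1}(\delta+j)$, and use $\ln(1+x)\le x$ as in \eqref{unifn}. The ratio $\mathcal{C}_k/(\delta^k/k!)$ (in the real case, $\mathcal{C}_k/((n/2)^k/k!)$) is then bounded by $C\,k\,e^{Kk^2/n}$ for an absolute constant $K$; in the real case $K\approx 2$ because of the $2k$ factors in $\Gamma(2k+n-1)$. Against this we have the Gaussian factor $e^{-k^2t_n(c)}=e^{-k^2(2\ln n+c)/n}$, which beats $e^{Kk^2/n}$ once $2\ln n+c>K$, i.e. for $n$ large uniformly in $c\in[-A,A]$. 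We obtain
$$|J_k(n)|\le \frac{C\,k\,(e^{A/2}C')^k}{k!},$$
which is summable.

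Uniform convergence on compacts follows from bounding $\big|J_k(n)(-1)^{k+1}k!/x_c^k-1\big|$. This is done exactly as in \eqref{UNI}--\eqref{UNIF1}, with the two-sided estimate $x-x^2/2\le\ln(1+x)\le x$, and yields a bound $O\big(k^2\frac{\ln n+A}{n}+\frac{k^3}{n^2}\big)$ times the dominating sequence.

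The main obstacle is the real projective case. There the Gamma ratios involve $2k$ and the prefactor $4^{-k}$, so one must check carefully that the crude product bounds still leave a net $e^{-ck^2\ln n/n}$ decay and that the prefactor does not blow up.
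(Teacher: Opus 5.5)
Your proposal is correct and follows essentially the same route as the paper: the sharp strong stationary time, the tail series of Theorem~\ref{queue1}, termwise limits $x_c^k/k!$ with $x_c=e^{-c/2}/2$, domination of $|J_k(n)|$ as in \eqref{unifn}, and uniformity on compacts as in \eqref{UNI}--\eqref{UNIF1}. Your explicit domination for $\mathbb{P}^n(\mathbb{R})$ spells out a step the paper only asserts. There, the factor $e^{2k^2/n}$ coming from $\Gamma(2k+n-1)/\Gamma(n-1)$ is absorbed by $e^{-k^2 t_n(c)}$.
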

\begin{proof}
The proof is very similar to the above proof.
Let $M^n = \mathbb{P}^n(\mathbb{C})$, it follows from asymptotic study of the two first moments of $ \tau_{M^n}$ as in \cite{Mag-Cou} that their are a cutoff phenomenon with cutoff time $ \frac{2 \ln(n)}{n}$ with  windows $ \frac{1}{n}$, for $c \in \mathbb{R} $, let $ t^{\mathbb{P}(\mathbb{C})}_n(c) = \frac{2\ln(n)}{n} + \frac{c}{n} $, let
\begin{align*} J^{ \mathbb{P}(\mathbb{C}) }_k(n) &=   (-1)^{k+1}  e^{\lambda_k  t^{\mathbb{P}(\mathbb{C})}_n(c) } \mathcal{C}_k( \mathbb{P}^n(\mathbb{C}),\alpha,\beta) . \\
\end{align*} 
Recall that for the complex projective  case $\alpha = \frac{n-2}{2} $ and $  \beta = 0 $ and so  $ \lambda_k = -k(k + \frac{n}{2}  ) $,  using  Theorem \ref{moment} and the same computation as \eqref{unifn} we get that  $J^{\mathbb{P}(\mathbb{C}) }_k(n) $ is uniformly bounded by an integrable series, and for
 fixed $k$,
\begin{align} 
\vert J^{ \mathbb{P}(\mathbb{C})}_k(n) \vert &=    e^{ -k(k + \frac{n}{2})( \frac{2\ln(n)}{n} + \frac{c}{n}   ) } \frac{ (2k + \frac{n}{2} ) \Gamma (k+ \frac{n}{2})}{ k ! \Gamma ( \frac{n}{2} +1)}  \notag   \\
&\sim_{n \to \infty } \frac{1}{k!}  e^{ -k( \ln(n) +\frac{c}{2}  )   }  \frac{  \Gamma (k+ \frac{n}{2} )}{  \Gamma ( \frac{n}{2} )}  \notag \\
&\sim_{n \to \infty } \frac{e^{ -\frac{kc}{2} }}{2^k k!}    \frac{ \Gamma (k+ \frac{n}{2} )}{ (\frac{n}{2})^k \Gamma ( \frac{n}{2} )}  \notag \\
&\sim_{n \to \infty } \frac{e^{ -\frac{kc}{2} }}{2^k k!}    \frac{(k+ \frac{n}{2}  )^{(k+ \frac{n}{2}  - \frac12)} e^{-(k+ \frac{n}{2} )} }{ (\frac{n}{2})^k  ( \frac{n}{2} )^{ \frac{n}{2} -\frac12} e^{-(\frac{n}{2} )}       }  \notag \\
&\sim_{n \to \infty } \frac{e^{ -\frac{kc}{2} }}{2^k k!}    \notag  .\\ \notag
\end{align}
It follows that 
$$\lim_{n \to \infty}  \fs(\cL(X_n(t^{\mathbb{P}(\mathbb{C})}_n(c)  )),\cU_{\mathbb{P}^n(\mathbb{C})})  =    1- e^{- \frac{e^{-\frac{c}{2}}}{2}}.$$

Recall that for the quaternionic projective  case $\alpha = \frac{n-2}{2} $ and $  \beta = 1$ and so  $ \lambda_k = -k(k + \frac{n}{2} +1 ) $ and the cutoff time is the same as the complex projective case \cite{Mag-Cou}, let $ t^{\mathbb{P}(\mathbb{H})}_n(c) = \frac{2\ln(n)}{n} + \frac{c}{n} $ and we have in a similar way
$$\lim_{n \to \infty}  \fs(\cL(X_n(t^{\mathbb{P}(\mathbb{H})}_n(c)  )),\cU_{\mathbb{P}^n(\mathbb{H})})  =    1- e^{- \frac{e^{-\frac{c}{2}}}{2}}.$$

Let $M^n = \mathbb{P}^n(\mathbb{R})$, from  \cite{Mag-Cou} their are a cutoff phenomenon with cutoff time  $ \frac{2 \ln(n)}{n}$ with optimal windows $ \frac{1}{n}$, for $c \in \mathbb{R} $, let $ t^{\mathbb{P}(\mathbb{R})}_n(c) = \frac{2\ln(n)}{n} + \frac{c}{n} $, and let
\begin{align*} J^{ \mathbb{P}(\mathbb{R}) }_k(n) &=   (-1)^{k+1}  e^{\lambda_k  t^{\mathbb{P}(\mathbb{R})}_n(c) } \mathcal{C}_k( \mathbb{P}^n(\mathbb{R}),\alpha,\beta) . \\
\end{align*} 
Recall that for the complex projective  case $\alpha = \frac{n-2}{2} = \beta  $ and so  $ \lambda_k = -k(k + \frac{n-1}{2}  ) $ (recall that the diameter is $\pi$ ),  using  Theorem \ref{moment} and the same computation as \eqref{unifn} we get that  $J^{\mathbb{P}(\mathbb{R}) }_k(n) $ is uniformly bounded by an integrable series, and for  fixed $k$,
\begin{align} 
\vert J^{ \mathbb{P}(\mathbb{R})}_k(n) \vert &=    e^{ -k(k + \frac{n-1}{2})( \frac{2\ln(n)}{n} + \frac{c}{n}   ) }    \frac{(4k + n-1  ) \Gamma (2k+  n-1  )\Gamma(\frac{n}{2})}{ 2^{2k} k! \Gamma (k+  \frac{n}{2}) \Gamma (n)}    \notag   \\
&\sim_{n \to \infty } \frac{ e^{ -\frac{kc}{2} }}{2^{2k} k!}    
  \frac{ \Gamma (2k+  n-1  )\Gamma(\frac{n}{2})}{ n^k  \Gamma (n -1 ) \Gamma (k+  \frac{n}{2})}   \notag \\
&\sim_{n \to \infty } \frac{ e^{ -\frac{kc}{2} }}{2^{2k} k!}    
  \frac{ (2k+  n-1  )^{2k+  n-1 - \frac12 } e^{-(2k+  n-1  )} (\frac{n}{2})^{\frac{n}{2} - \frac12 } e^{- \frac{n}{2}}  }{ n^k   (n -1 )^{n -1 - \frac12 } e^{-(n -1 )}   (k+  \frac{n}{2})^{ k+  \frac{n}{2} - \frac12 }  e^{-(k+  \frac{n}{2}) }}   \notag \\
  &\sim_{n \to \infty } \frac{ e^{ -\frac{kc}{2} }}{2^{2k} k!}    
  \frac{ (2k+  n-1  )^{2k+  n-1 - \frac12 } e^{-(k  )} (\frac{n}{2})^{\frac{n}{2} - \frac12 } }{ n^k   (n -1 )^{n -1 - \frac12 }  (k+  \frac{n}{2})^{ k+  \frac{n}{2} - \frac12 }  }   \notag \\
&\sim_{n \to \infty } \frac{e^{ -\frac{kc}{2} }}{2^k k!}    \notag  .\\ \notag
\end{align}
It follows that 
$$\lim_{n \to \infty}  \fs(\cL(X_n(t^{\mathbb{P}(\mathbb{R})}_n(c)  )),\cU_{\mathbb{P}^n(\mathbb{R})})  =    1- e^{- \frac{e^{-\frac{c}{2}}}{2} },$$
also the windows $ \frac1n$ is in fact strongly optimal. The uniform convergence in all compact follows with the same proof as in Theorem \ref{profile-S}.
\end{proof}   

\begin{rem}
If $M^n =   \mathbb{P}^n(\mathbb{C}), \mathbb{P}^n(\mathbb{H}) \text{ or }  \mathbb{P}^n(\mathbb{R}) $ we have the following convergence in law :
\begin{align*}
  n {\tau_{M^n} } -  2\ln(n)\underset{n \to +\infty}{\overset{\mathcal{L}}{\longrightarrow}} Gumbel \big(-2 \ln(2), 2 \big).
\end{align*}

\end{rem}


 \bibliographystyle{plain}

\vskip2cm
\hskip70mm
\vbox{
\copy4
 \vskip5mm
 \copy5
  \vskip5mm
 \copy6
}

\end{document}